\title{{A Contraction Property} of an Adaptive Divergence-Conforming Discontinuous Galerkin Method for the Stokes Problem}
\author{Natasha Sharma \and Guido Kanschat}
\date{}
\let\phi\varphi
\def\grad{\nabla}%
\def\curl{\nabla\!\times\!}%
\def\div{\nabla\!\cdot\!}%
\def\hessian{\nabla^2}
\def\mvl#1{\left\{\!\!\left\{#1\right\}\!\!\right\}}
\def\jmp#1{\left[\!\!\left[#1\right]\!\!\right]}
\def\asterisk{*}
\def\ak{a^{IP}_k}
\def\aj{a^{IP}_j}
\def\bff{\boldsymbol f}
\def\bn{\boldsymbol n}
\def\be{\boldsymbol e}
\def\bu{\boldsymbol u}
\def\bv{\boldsymbol v}
\def\bw{\boldsymbol w}
\def\bx{\boldsymbol x}
\def\bH{\boldsymbol H}
\def\bV{\boldsymbol V}
\def\bu{\boldsymbol u}
\def\bv{\boldsymbol v}
\def\H{\text{H}}
\def\Hcurl#1{H^{\ifx#10\else#1,\fi\text{curl}}}
\def\Hdiv#1{\boldsymbol{H}^{\ifx#10\else#1,\fi\text{div}}}
\def\Hdivo#1{\boldsymbol{H}_0^{\ifx#10\else#1,\fi\text{div}}}
\def\Hrot#1{H^{\ifx#10\else#1,\fi\text{rot}}}
\def\T{\mathbb T}                           
\def\F{\mathbb F}                           
\def\R{\mathbb R}                           
\def\Q{\mathbb Q}                           
\def\P{\mathbb P}                           
\def\d{\partial}
\def\n{\boldsymbol n}
\def\Rka{\mathbb{R}_{k\asterisk}}
\def\ok{\text{osc}_k}
\newcommand{\norm}[1]{\bigl\|#1\bigr\|}
\def\form(#1,#2){\left(#1,#2\right)}
\def\forme(#1,#2){\left<#1,#2\right>}
\def\Mk{\mathbb{M}_k}   
\def\M1k{\mathcal{M}_{k+1}}                             
\def\d{\partial}
\def\n{\boldsymbol n}
\newtheorem{assumption}{Assumption}
\newtheorem{lemma}{Lemma}
\newtheorem{theorem}{Theorem}
\newtheorem{proposition}[theorem]{Proposition}
\newtheorem{remark}{Remark}
\begin{document}
\maketitle

\begin{abstract}
  We prove the contraction property for two successive loops of the adaptive algorithm for the
  Stokes problem reducing the error of the velocity. The problem is
  discretized by a divergence-conforming discontinuous Galerkin method
  which separates pressure and velocity approximation due to its
  cochain property. This allows us to establish the
  quasi-orthogonality property which is crucial for the proof of the contraction. We also establish the quasi-optimal complexity of the adaptive algorithm in terms of the degrees of freedom.
\end{abstract}

\section{Introduction}
\label{sec:introduction}

Numerical methods for solving the Stokes problem involve approximating
velocity and pressure by finite element spaces satisfying the inf-sup
condition. The
convergence analysis of an adaptive method for the Stokes problem
relies heavily on a quasi-orthogonality property which is
difficult to establish for this indefinite problem. The difficulty
here is due to the coupling of the pressure and velocity and
the inability to separate them for the analysis. However, in light of
the recent results~\cite{KanschatSharma14}, we are able to write the
discrete weak formulation for the Stokes problem as a pair of equations one
involving the divergence-free velocity and the other just involving
pressure. This allows us to obtain quasi-orthogonality for the
velocity only and thus to prove adaptive convergence.

First results in this direction were presented by Morin, Bänsch, and Nochetto
in~\cite{BaMoNo02} where
an Uzawa algorithm was used as an outer loop and continuous elements
of arbitrary degree were employed. In 2006 and 2007,
Kondratyuk and Stevenson~\cite{Ko06,KoSt08},
respectively, presented convergence and optimality for the Stokes
problem with continuous velocity approximations. However, all of these results have in common, that they analyze a
coupled adaptive and Uzawa iteration. On the other hand, Uzawa
iteration is not the most efficient solver for the Stokes
problem.
 
Pioneering work in the convergence and optimality of adaptive finite
element method using nonconforming Crouzeix-Raviart
element of the lowest order was initialized in a 2007
technical report by Hu and Xu~\cite{HuXu07}. Another work was by
Mao and Becker in~\cite{BeMa11} which included an analysis for
rectangular meshes and relying on the Crouzeix–Raviart and Rannacher–Turek elements. In~\cite{HuXu13}, Hu and Xu presented an
improved analysis
{ independent of the linear solver. It is directly based on the computed discrete solution and the separation of velocity and pressure depends on the  use of the Scott-Vogelius finite elements
and of non-conforming piecewise linear finite elements.} Meanwhile independently, Carstensen and co-workers
too presented a convergence and optimality analysis
in~\cite{CaPeRa13}. In both publications, the lowest order
Crouzeix-Raviart element was employed on triangles. 
In~\cite{CaPeRa13}, the analysis was
independent of pressure thanks to a new discrete Helmholtz
decomposition of the deviatoric matrices.

In this article, we present {{a} contraction property for adaptive}
divergence-conforming discontinuous Galerkin methods using finite
element exterior calculus tools to decompose velocity and pressure. In
particular, we are considering Raviart-Thomas elements of arbitrary
order. To our knowledge, this is the first higher order adaptive
method for the Stokes problem separating pressure and velocity.

The main ingredient for this current article is our recent
result~\cite{KanschatSharma14} which enables us to obtain a globally
discrete divergence-free velocity under the assumption that the
original fluid is incompressible.  Our approach is as follows: using
the cochain {property} of the finite element spaces, we rewrite the weak
formulation of the Stokes problem as a pair of variational equations:
one involving velocity and the other involving the pressure. The
velocity formulation computes velocity for the divergence-free
component of the right hand side $\bff$. The pressure formulation is
essentially a Poisson problem for which convergence and optimality
has already been presented. Then, we continue the analysis in the
divergence-free subspace. As an important ingredient, we need a
continuous interpolant, which is divergence-free. To this end, we
develop a new interpolation operator based on the element by Austin et
al.~\cite{AustinManteuffelMcCormick04}.

The main goal of the paper is to present a convergence of the Stokes
Problem in the same spirit as~\cite{BonitoNochetto10} based on the a
posteriori error estimator derived in our recent
paper~\cite{KanschatSharma14}. Some natural modifications are
introduced to extend the analysis to the Stokes
problem{. This extension is possible, since the
  divergence-free velocity depends on the divergence-free component of
  the right hand side only}. This way, our analysis need not involve
pressure component at all.

The paper is organized as follows. In section 2, we introduce the
necessary notation. Section 3 presents the Hodge decomposition for the
velocity space and a review { of }the divergence-conforming interior penalty
method. The main result of section 3 is the continuous
Raviart-Thomas space of order $m$ where $m \ge 2$ with commuting
interpolation operators. The { adaptive interior penalty method is described} in section 4 followed by {establishing the main ingredients of the
contraction property} in section 5. Section 6 addresses the optimality of the adaptive method and finally, in section 7 we discuss { some} numerical results which illustrate the
convergence of the adaptive method.
\section{Notation}
\label{sec:Notation}
Let $\Omega  \subset \R^d${, $d=2,3$ be a domain which can be expressed as a union of axis parallel rectangles and bricks, respectively,} with boundary
{$\Gamma$}. {While our analysis applies to the three-dimensional case, we will restrict the presentation to two dimensions and only comment on extensions where necessary.}
 In the following, $H^s(\Omega)$ denotes the $L^2$-based Sobolev space of differentiation order $s\ge0$. We will denote vector
and tensor{-}valued Sobolev spaces by $\bH^s(\Omega)$. The $L^2$-inner product {{on any $S\subset {\Omega}$ is denoted by
\begin{gather}
  \label{eq:l2-product}
  \form(f,g)_S = \form(f,g)_{L^2(S)} := \int_S f\odot g\,d\bx,
\end{gather}
}} 
where the generic multiplication operator ``$\odot$'' denotes the
product, the dot product, or the double contraction for scalar,
vector, and tensor functions, respectively. {{In particular, for $L^2$ inner products over the whole domain $\Omega$ we drop the subscript ``$\Omega$" that is, $\form(f,g)=\form(f,g)_{\Omega}.$}}
 Inner products in other
spaces are denoted by an index. In particular, on the subspace ${H^1_0(\Omega)}$ of 
$H^1(\Omega)$ with homogeneous boundary conditions such that a Poincaré inequality holds, we use the inner product
\begin{gather*}
  \form(f,g)_{ \bH^1_0(\Omega)} = \form(\nabla f, \nabla g).
\end{gather*}
For a differentiable scalar function $p$, we use the standard differential operator notation for the  vector curl $\curl p= (-\partial_2  p ,  \partial_1  p)$, the gradient
$\nabla p$, the symmetric tensor of second derivatives $\hessian
p$, the divergence $\div \bv$, and the Laplacian $\Delta p =
\div\nabla p$. For vectors and tensors, we define
\begin{gather*}
  \nabla\bv =
  \begin{pmatrix}
    \d_1 v_1 & \d_2 v_1 \\ \d_1 v_2 & \d_2 v_2
  \end{pmatrix}
  \qquad\text{and}\qquad
  \div
  \begin{pmatrix}
    a_{11} & a_{12} \\ a_{21} & a_{22}
  \end{pmatrix}
  =
  \begin{pmatrix}
    \d_1 a_{11} + \d_2 a_{12} \\ \d_1 a_{21} + \d_2 a_{22}
  \end{pmatrix}.
\end{gather*}
{ Although these differential operators can be analogously defined for three dimensions, we have restricted their definition to two dimensions since our analysis presented is restricted to two dimensions. 
}
Finally, we let $\Hdivo0(\Omega)$ be the space of all vector
fields with $L^2$ divergence and homogeneous normal boundary conditions {and let $H^{\text {curl}}(\Omega)$ denote the space of potentials on $\Omega$ whose vector curls live in $ L^2(\Omega)^2$}.
\section{The Stokes Problem}
Suppose $\Omega \subset \R^d$ is { as described in section~\ref{sec:Notation},}
 then the Stokes problem reads
\begin{gather*}
\begin{split}
- \Delta \bu + \grad p &= \bff  \quad  \text{in } \Omega,\\
\div \bu &=0  \quad  \text{in } \Omega, \\
  \bu&={\boldsymbol{0}}  \quad  \text{on } \Gamma,
\end{split}
\end{gather*}
with  $\bu $ denoting the velocity, $p$ the pressure and the external force  $\bff \in L^2(\Omega)^d$ acting on the fluid. 
Upon introducing boundary conditions, we obtain the following solution
spaces
\begin{align*}
  \bV &= \bigl\{ \bv\in\bH^1(\Omega) \;\big|\;
\bv = {\boldsymbol{0}} \quad  \text {on} \quad \Gamma
  \bigr\} 
  \\ Q &= {L}_0^2(\Omega) = {L}^2(\Omega)\setminus \mathbb{R}.
\end{align*}
The space $\bV$ admits the Hodge decomposition,
\begin{gather*}
  \bV = \bV^0 \oplus \bV^\perp,
\end{gather*}
where the divergence-free subspace $\bV^0$ consists of the curls of
functions in the potential spaces $H^1(\Omega)$ and
$H^{\text{curl}}(\Omega)$ in two and three dimensions,
respectively. If the domain $\Omega$ is not simply connected, $\bV^0$
will contain a finite number of harmonic functions as well. The space
$\bV^\perp$ is isomorphic to $Q$ and its elements $\bv$ can be characterized
uniquely by the conditions
\begin{align*}
  \exists q\in Q\quad : \quad&
  \div \bv = q,\\
  \forall \bw\in \bV^0\quad : \quad&
  \form(\bv,\bw)_{\boldsymbol H^1_0(\Omega)} = 0.
\end{align*}

The weak formulation requires us to find a suitable pair $(\bu,p) \in \bV \times Q$ such that
\begin{gather}
  \label{eq:Stokes}
  \form(\nabla\bu, \nabla\bv) - \form(p,\div \bv) - \form(q,\div \bu)
  = \form(\bff,\bv) \quad  \forall \  (\bv,q) \in \bV \times Q \quad \text{ holds.}
\end{gather}
We eliminate the pressure by restricting the above variational problem to
the subspace $\bV^0$. 
Then,
the formulation~\eqref{eq:Stokes} requires us to find $\bu \in \bV^0$ satisfying :
\begin{gather}
  \label{eq:Stokes_V0}
  \form(\nabla\bu, \nabla\bv) 
  = \form(\bff,\bv) \quad  \forall \  \bv \in \bV^0.
\end{gather}

Vice versa, by inserting $\div \bu=0$ into the weak formulation and
testing with $\bv\in \bV^\perp$, we obtain the pressure equation
\begin{gather*}
  - \form(p,\div\bv) = \form(\bff,\bv) \quad  \forall \  \bv \in \bV^\perp.
\end{gather*}
Solvability of this equation is due to the isomorphy of $\bV^\perp$
and $Q$. 
These two equations allow us to determine the velocity $\bu$ and the
pressure $p$ independently, based on the Hodge decomposition of
$\bff$. Next we will establish an analogous result for the discrete
problem.

\subsection{Discrete Hodge decomposition}

Let $\{\T_k\}_{k>0}$ be a uniformly shape regular family of partitions of
$\Omega$ into rectangular cells with $k$ denoting the level of mesh
refinement and obtained from refining an initial partition $\T_0$.
 We further assume that this partition is aligned with the
Cartesian axes i.e, any $T \ \in \T_k,$ can be expressed as
$T= T_x \times T_y$ with $T_x=[x_0, \ x_1]$, $T_y= [y_0, \ y_1],$
where $x_0 < x_1$ and $y_0 < y_1$. We remark that this condition seems
very restrictive and not necessary for our
discretization. Nevertheless, we use a lifting into a continuous
cochain complex suggested in~\cite{AustinManteuffelMcCormick04}, which
requires rectangular corners of all quadrilaterals or hexahedra. Let
furthermore
\begin{gather*}
\Q_{m,n}(T)=\{p(x)q(y) : p \in \P_m(T_x), \  q \in \P_n(T_y)  \}  
\end{gather*}
where $\P_\mu(I)$ denote the space of polynomials defined on $I
\subset \R$ of degree at most $\mu$. Let
\begin{gather*}
  RT_m(T) = \left\{
      \begin{pmatrix}
        p(x,y)\\q(x,y)
      \end{pmatrix}
      \bigg|
      p\in \Q_{m+1,m}(T), q\in \Q_{m,m+1}(T)
      \right\}
\end{gather*}
denote the Raviart-Thomas~\cite{RaviartThomas77} space of order $m$
defined on $T \in \T_k$. { Extension to three
dimensional finite elements is {straightforward~\cite{Nedelec80}}.

Associated with {the partition $\T_k$ are the discrete spaces
$\bV_{k}$ and $ Q_{k}$} defined by
\begin{gather}
\label{eq:discrete_velo}
\begin{split}
\bV_{k}&:=\{\bw \in {\Hdivo0(\Omega)}: \bw|_T \in RT_{m}(T), \ T \in \T_{k} \},\\
Q_{k}&:=\{q \in L_0^2(\Omega): q|_T \in \Q_{m,m}(T),\ T \in \T_{k}\}.
\end{split}
\end{gather}
These spaces are equipped with the canonical projection operators
$I_{V_k}$ into the Raviart-Thomas space
(cf. e.~g.~\cite{BoffiBrezziFortin13}) and the $L^2$-projection
$I_{Q_k}$ such that the following diagram commutes:
\begin{gather*} 
  \begin{CD}
    {\Hdivo0(\Omega)} @>\div>> L^2_0(\Omega)
    \\
    @V{I_{V_k}}VV @VV{I_{Q_k}}V
    \\
    \bV_k @>\div>> Q_k.
  \end{CD}
\end{gather*}

As a consequence of this diagram, we obtain
\begin{align*}
  \bV_k^0 &=\bigl\{ \bv_k\in \bV_k \;\big|\; (\div \bv_k,q_k) = 0
            \;\forall q_k\in Q_k \bigr\}
  \\
  &= \bigl\{ \bv_k\in \bV_k \;\big|\; \div \bv_k = 0 \bigr\}.
\end{align*}
 Associated with $\T_k$, we let $\F_k$ and $\overline{\F}_k$ denote the set of interior faces and set of all the faces respectively.
We close this section, by introducing short hand notation for the space 
$\bV(k)=\bV_k + \bV$ and, for the integrals over
$\T_k$ and $\F_k$,
\begin{xalignat*}2
  \form(f,g)_{\T_k} &:= \sum_{T\in\T_k}\form(f,g)_T,
  &
  \forme(f,g)_{\F_k} &:= \sum_{F\in\F_k}\forme(f,g)_F
  = \sum_{F\in\F_k} \int_F f\odot g\,ds.
\end{xalignat*}
Additionally, we introduce the discrete space  
\[\Sigma_{p}:=\{ {\underline{\tau}} \in L^2(\Omega, \R^{2 \times 2})\ | \ {\underline{\tau}}|_T \in \Q_{{p},{p}}(T)^{ 2 \times 2}, \quad T \in \T_k \}\] where $\Q_{{p},{p}}(T)^{ 2 \times 2}$ denotes the space of $2 \times 2$  valued functions with entries being polynomials of degree at most ${p}$ and {this} ${p}$ is { chosen to be $m+1$, with $m$ denoting the order of the Raviart-Thomas space $\bV_k$ introduced in~\eqref{eq:discrete_velo}.}

\subsection{The $\bH^1$-conforming subspace}

Establishing the reliability and the quasi orthogonality rely on
decomposing the error of the divergence-free velocity into an
$\bH^1$-conforming and a non-conforming component. Therefore, we now
describe the continuous subspace of higher-order Raviart-Thomas
elements which was introduced
in~\cite{AustinManteuffelMcCormick04}. To this end, let $m = 2$ and
define
\begin{gather*}
  \begin{alignedat}3
    \bV_{k}^c &:=\{\bw \in \bV &:
    \bw_{|_T} &\in RT_2(T),&
    \forall \; T &\in \T_k\},\\
    Q_{k}^c &:=\{q \in Q \cap H^1(\Omega) &:
    q_{|_T} &\in \Q_{2,2}(T),&
    \forall \; T &\in \T_k\}.
  \end{alignedat}
\end{gather*}

In~\cite{AustinManteuffelMcCormick04}, { the continuity of the elements in }the velocity
space $\bV_k^c$ is obtained by mixed Hermite/Lagrange interpolation,
as in Figure~\ref{fig:nodes} on the left.
\begin{figure}[tp]
  \centering
  \includegraphics[width=.45\textwidth]{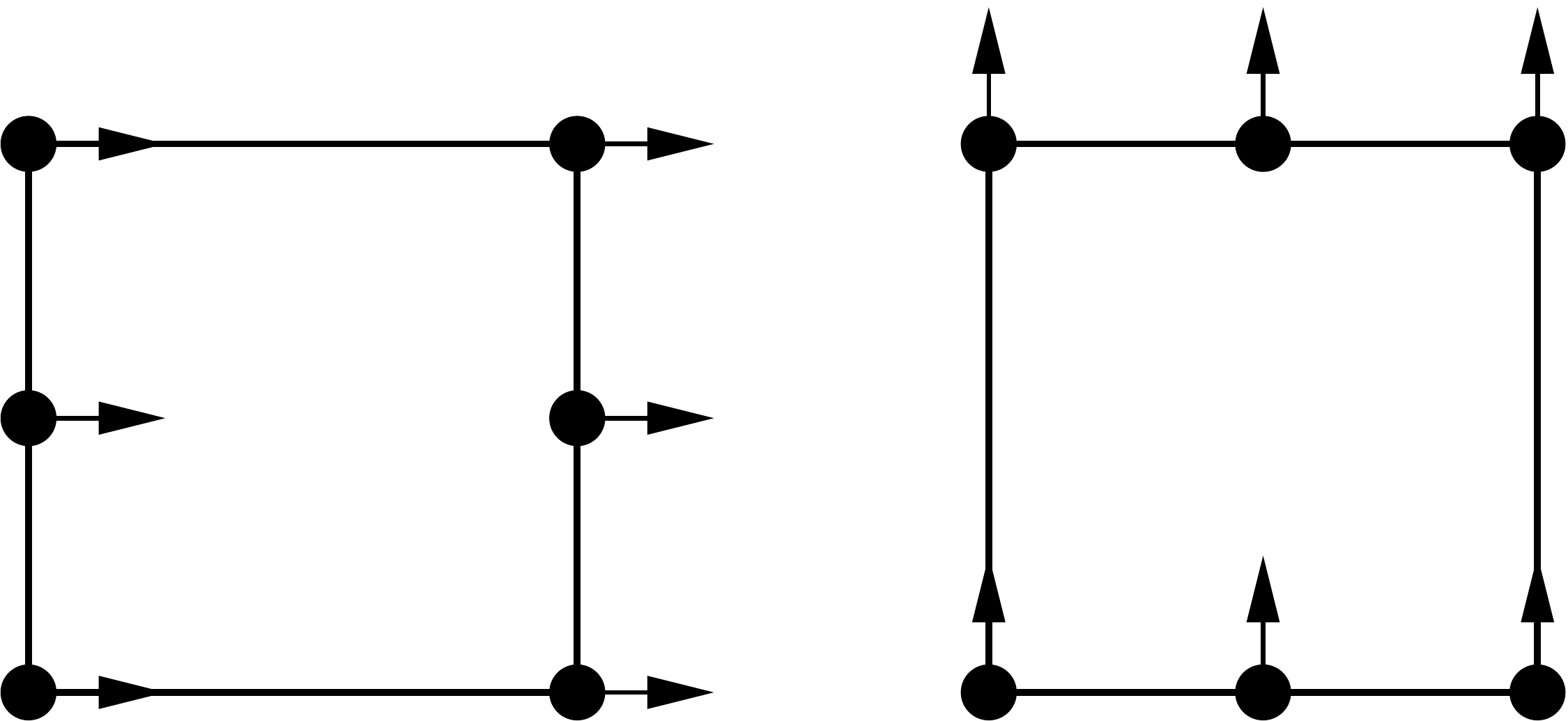}\hfill
  \includegraphics[width=.45\textwidth]{fig/nodes_crt}
  \caption[Degrees of freedom of the continuous Raviart-Thomas element]{The continuous Raviart-Thomas element of order two
    $\bV_k^c$ with degrees of freedom of Austin, Manteuffel, and
    McCormick (left) and of the commuting interpolation operator
    $\widehat I^c_{\bV}$ (right): symbols indicate point values of the
    velocity vector \tikz{\fill (0,0) circle [radius=.5ex];}, the
    normal component \tikz{\fill (0,1) ellipse (.4ex and .7ex);} and its
    normal derivative \tikz{\draw[thick,->] (1ex,0) -- (1ex,1.5ex);},
    the \textcolor{red}{values $\d_{\hat x} \hat u$ and $\d_{\hat y}\hat v$} \tikz{\draw[thick,->] (0,0) -- (0+.15,0+.15);
      \draw[thick,->] (0,0) -- (0+.15,0-.15); \draw[thick,->] (0,0) --
      (0-.15,0+.15); \draw[thick,->] (0,0) -- (0-.15,0-.15);}, as well as mean
    values of the normal component \tikz{\draw[fill=black,rounded
    corners=.02] (0,0) rectangle (1.5em,.3ex);} and its normal
  derivative \tikz{\fill (0,0) -- (1.5em,0) -- (.75em,.7ex) -- (0,0);} over edges.}
  \label{fig:nodes}
\end{figure}
Here, we need a pair of interpolation operators $I_{\bV}^c:\bV \to
\bV_k^c$ and $I_Q^c:Q\to Q_k^c$ such that the following diagram commutes:
\begin{gather*}
  \begin{CD}
    \bV @>\div>> Q
    \\
    @V{I_{\bV}^c}VV @VV{I_Q^c}V
    \\
    \bV_k^c @>\div>> Q_k^c.
  \end{CD}
\end{gather*}

We construct these interpolation operators in three steps: first, we
define it on smooth functions on the reference cell $\widehat T = [-1,1]^2$
choosing the node values on the right of Figure~\ref{fig:nodes} (see
equation~\eqref{eq:stokes_conv:1} below). These operators are extended
by push-forward on each cell to an interpolation operator on the whole
mesh. The last step consists of the
extension to less smooth functions.
Let $\hat x$ and $\hat y$ be
the coordinates on $\widehat T$ and
let $\hat u$ and $\hat v$ be the two components of the vector
field to be interpolated and let an index $k$ denote discrete
functions on $\widehat T$. Then, the interpolation operator
\begin{align*}  
  \widehat I_{\bV}^c : C^1(\widehat T) &\to RT_2(\widehat T) \\
  \begin{pmatrix}
    \hat u\\\hat v
  \end{pmatrix}
  &\mapsto
    \begin{pmatrix}
      \hat u_k\\\hat v_k      
    \end{pmatrix},
\end{align*}
is defined by the following 24 interpolation conditions:
\begin{gather}
  \arraycolsep2pt
  \label{eq:stokes_conv:1}
  \begin{split}
    \left.
      \begin{matrix}
        \hat u_k(\hat x, \hat y) & = & \hat u(\hat x, \hat y) \\
        \hat v_k(\hat x, \hat y) &=& \hat v(\hat x, \hat y) \\
        \partial_{\hat x}\hat u_k(\hat x, \hat y)
        &=& \partial_{\hat x}\hat u(\hat x, \hat y) \\
        \partial_{\hat y}\hat v_k(\hat x, \hat y)
        &=& \partial_{\hat y}\hat v(\hat x, \hat y)
      \end{matrix}
    \right\} &\quad \hat x=\pm1, \;\hat y=\pm1,\\
    \left.
      \begin{matrix}
        \displaystyle\int_{-1}^1
        \bigl[\hat u_k(\hat x, \hat y)-\hat u(\hat x, \hat y)\bigr]
        \,d\hat y & = & 0 \\
        \displaystyle\int_{-1}^1
        \bigl[\partial_{\hat x}\hat u_k(\hat x, \hat y)
        -\partial_{\hat x}\hat u(\hat x, \hat y)\bigr]
        \,d\hat y
        &=&0
      \end{matrix}
    \right\} &\quad \hat x=\pm1,\\
    \left.
      \begin{matrix}
        \displaystyle\int_{-1}^1
        \bigl[\hat v_k(\hat x, \hat y)-\hat v(\hat x, \hat y)\bigr]
        \,d\hat x & = & 0 \\
        \displaystyle\int_{-1}^1
        \bigl[\partial_{\hat y}\hat v_k(\hat x, \hat y)
        -\partial_{\hat y}\hat v(\hat x, \hat y)\bigr]
        \,d\hat x &=&0
      \end{matrix}
    \right\} &\quad \hat y=\pm1.
  \end{split}
\end{gather}
The finite element space as well as the interpolation operator have a
tensor product structure.
{
The velocity space restricted to any cell $T \in \T_k$ is
$\Q_{3,2}(T)\times \Q_{2,3}(T)$. The 12 node values involving the first
component of the velocity are
\begin{itemize}
\item the tensor product of the standard
Hermite interpolation in $x$-direction with an interpolation in
$y$-direction involving the function values at the end points and
\item  the
average over the interval $[-1,1]$.
\end{itemize}
}

These formulas determine polynomials of degrees 3 and 2, respectively,
in a unique way.  Therefore, this element is unisolvent. Furthermore,
it can be seen easily, that it is globally continuous with continuous
derivatives {of normal components}. Thus, its
divergence is continuous, and it is a biquadratic polynomial.

Choose now an interpolation operator $\widehat I_Q^c$ for $Q_{2,2}$ using the
following degrees of freedom: the values in the four vertices, the
mean values on the edges and the mean value on the whole reference
cell.
\begin{lemma}[Commutative Property]
  \label{lemma:commutation}
  The interpolation operators $\widehat I_{\bV}^c$ and $\widehat I_Q^c$
  admit the following commutative diagram:
  \begin{gather*}
    \begin{CD}
      C^1(\widehat T) @>\div>> C(\widehat T) \\
      @V{\widehat I_{\bV}^c}VV @VV{\widehat I_Q^c}V \\
      RT_2(\widehat T) @>\div>> Q_{2,2}(\widehat T).
    \end{CD}
  \end{gather*}
\end{lemma}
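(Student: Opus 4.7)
The plan is to verify the identity $\div \widehat I_{\bV}^c \hat{\bu} = \widehat I_Q^c \div \hat{\bu}$ for $\hat{\bu}=(\hat u,\hat v)^T \in C^1(\widehat T)$ by checking it against the nine degrees of freedom defining $\widehat I_Q^c$ on $Q_{2,2}(\widehat T)$. Since $\div RT_2(\widehat T) \subset Q_{2,2}(\widehat T)$, both sides lie in the same nine-dimensional space, so it suffices to show that $\widehat I_Q^c$ applied to $\div \hat{\bu}_k - \div \hat{\bu}$ vanishes, where $\hat{\bu}_k = \widehat I_{\bV}^c \hat{\bu}$. The nine functionals are the values at the four vertices, the four edge means, and the mean over $\widehat T$.

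First I would handle the vertex conditions. At each corner $(\pm 1,\pm 1)$, the interpolation conditions~\eqref{eq:stokes_conv:1} directly prescribe $\partial_{\hat x}\hat u_k = \partial_{\hat x}\hat u$ and $\partial_{\hat y}\hat v_k = \partial_{\hat y}\hat v$, so $\div \hat{\bu}_k = \div \hat{\bu}$ pointwise at each vertex. Next I would treat the edge means. On the edge $\hat x = \pm 1$, the tangential-derivative contribution matches since $\int_{-1}^1 \partial_{\hat x}\hat u_k\,d\hat y = \int_{-1}^1 \partial_{\hat x}\hat u\,d\hat y$ is exactly one of the integral conditions in~\eqref{eq:stokes_conv:1}. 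For the other contribution, I would apply the fundamental theorem of calculus:
\begin{gather*}
  \int_{-1}^1 \partial_{\hat y}\hat v_k(\pm 1,\hat y)\,d\hat y = \hat v_k(\pm 1,1) - \hat v_k(\pm 1,-1) = \hat v(\pm 1,1) - \hat v(\pm 1,-1) = \int_{-1}^1 \partial_{\hat y}\hat v(\pm 1,\hat y)\,d\hat y,
\end{gather*}
using the vertex value interpolation. The edges $\hat y = \pm 1$ are handled symmetrically.

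For the cell mean, I would invoke the divergence theorem,
\begin{gather*}
  \int_{\widehat T} \div \hat{\bu}_k\,d\hat x\,d\hat y = \int_{\partial \widehat T} \hat{\bu}_k\cdot \hat{\bn}\,ds,
\end{gather*}
and similarly for $\hat{\bu}$. On each edge of $\partial\widehat T$, the normal component of $\hat{\bu}_k$ is a polynomial of degree three whose integral is prescribed by combining the two endpoint values, the mean, and the two endpoint normal derivatives---all of which are interpolated exactly. Concretely, the two vertex value conditions together with the edge mean condition on the normal component give $\int_{\partial\widehat T}\hat{\bu}_k\cdot \hat{\bn}\,ds = \int_{\partial\widehat T}\hat{\bu}\cdot\hat{\bn}\,ds$. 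Thus the cell mean of $\div\hat{\bu}_k$ coincides with that of $\div\hat{\bu}$, which is exactly the cell-mean degree of freedom of $\widehat I_Q^c$.

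Having matched all nine functionals, unisolvency of the chosen degrees of freedom on $Q_{2,2}(\widehat T)$ will yield $\div \hat{\bu}_k = \widehat I_Q^c \div \hat{\bu}$. The only genuinely delicate step is the verification of the edge means, where the mix of ``integral of the derivative'' and ``derivative via endpoint telescoping'' must be packaged carefully; once that is observed, the vertex and cell-mean identities are essentially bookkeeping.
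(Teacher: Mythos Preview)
Your proof is correct and follows essentially the same approach as the paper: both verify the nine degrees of freedom of $\widehat I_Q^c$ (vertices, edge means, cell mean) by combining the vertex derivative conditions of~\eqref{eq:stokes_conv:1}, the fundamental theorem of calculus along the edges, and the divergence theorem for the cell mean. The only differences are cosmetic --- you exemplify on a vertical edge rather than a horizontal one, and your cell-mean discussion is slightly more elaborate than necessary, since the edge mean conditions on the normal component alone already force $\int_{\partial\widehat T}\hat{\bu}_k\cdot\hat{\bn}\,ds = \int_{\partial\widehat T}\hat{\bu}\cdot\hat{\bn}\,ds$.
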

\begin{proof}
  Choose $(\hat u, \hat v)$ and let $\hat p = \div (\hat u, \hat
  v)^T$. Let furthermore $(\hat u_k, \hat v_k)^T = \widehat I_V^c(\hat u, \hat
  v)^T$ and $\hat p_k = \widehat I_Q^c(\hat p)$. Now, we use the interpolation
  conditions to show that $\hat p_k = \div (\hat u_k, \hat v_k)^T$.

  First, in all four vertices, there holds by the third and fourth
  condition in~\eqref{eq:stokes_conv:1}
  \begin{gather*}
    \hat p_k
    = \partial_{\hat x}\hat u + \partial_{\hat y} \hat v
    = \partial_{\hat x}\hat u_k + \partial_{\hat y} \hat v_k.
  \end{gather*}
  Then, for the mean value of $\hat p_k$ on the top edge of
  $\widehat T$, there holds by the first and last conditions
  in~\eqref{eq:stokes_conv:1}
  \begin{multline}
    \label{eq:1}
    \int_{-1}^1 \hat p_k(\hat x, 1) \,d\hat x
    = \int_{-1}^1 \hat p(\hat x, 1) \,d\hat x
    = \int_{-1}^1 \bigl[\partial_{\hat x}\hat u
    + \partial_{\hat y} \hat v\bigr] \,d\hat x
    \\
    = \hat u(1, 1) - \hat u(-1, 1)
    + \int_{-1}^1 \partial_{\hat y} \hat v(\hat x, 1) \,d\hat x
    \\
    = \hat u_k(1, 1) - \hat u_k(-1, 1)
    + \int_{-1}^1 \partial_{\hat y} \hat v_k(\hat x, 1) \,d\hat x.
  \end{multline}
  The same argument applies to the other three edges, such that there
  remains using Gauss' theorem and conditions five and seven
  in~\eqref{eq:stokes_conv:1} to deduce
  \begin{multline*}
    \int_{\widehat T}\hat p_k (\hat x, \hat y) \,d\hat x\,d\hat y
    = \int_{\widehat T}\hat p (\hat x, \hat y) \,d\hat x\,d\hat y
    = \int_{\partial \widehat T}
    \begin{pmatrix}
      \hat u\\\hat v
    \end{pmatrix}
    \cdot \n \,d\hat s
    \\
    = \int_{-1}^1 \bigl[\hat v(\hat x,1)-\hat v(\hat x,-1)\bigr]
    \,d\hat x
    + \int_{-1}^1 \bigl[\hat u(1, \hat y)-\hat v(-1,\hat y)\bigr]
    \,d\hat y
    \\
    = \int_{-1}^1 \bigl[\hat v_k(\hat x,1)-\hat v_k(\hat x,-1)\bigr]
    \,d\hat x
    + \int_{-1}^1 \bigl[\hat u_k(1, \hat y)-\hat u_k(-1,\hat y)\bigr]
    \,d\hat y
    \\
    = \int_{\widehat T} \div
    \begin{pmatrix}
      \hat u_k\\\hat v_k
    \end{pmatrix}
    \,d\hat x\,d\hat y.
  \end{multline*}
  Thus, we have proven that the defining node values for $\hat p_k$
  are those obtained from {$\div \widehat I_V^c (\hat u,\hat v)^T$}. Since
  these define $\hat p_k$ uniquely, we have indeed shown the statement
  of the lemma.
\end{proof}
So far, we have only studied interpolation of smooth functions on the
reference cell. Interpolation on the actual grid cell can be achieved
simply by pull-back of the interpolated function. Note that at this
point it is crucial, that the mesh cells are rectangular, since the
degrees of freedom at the corner points must be the normal derivatives
to the corresponding edges, not tangential, as Hermite interpolation
prescribes.

In order to extend the interpolation operator $\widehat I_V^c$ to a
continuous operator
\begin{gather}
  \label{eq:Clement}
  I_V^c : \bH^1(\Omega) \to \bV_k^c,  
\end{gather}
 { we summarize shortly the construction by
  J.~Schöberl in~\cite{Schoeberl08,Schoeberl10-multilevel-hcurl} and its adaptation
  to the case of continuous derivatives:
  like in the quasi-interpolation technique of Clément
  degrees of freedom consisting of function values in a vertex $x_i$ are
  replaced by weighted averages over a sufficiently small ball $B_i$ around
  this vertex:
  \begin{gather}
    \label{eq:vertex-integral}
    u(x_i) \longrightarrow \quad \int\limits_{B_i} \eta_i(\xi_i) u(\xi_i) \,d\xi_i,
    \qquad
    \partial_x u(x_i) \longrightarrow \quad
    \int\limits_{B_i} \eta_i(\xi_i) \partial_x u(\xi_i) \,d\xi_i.
  \end{gather}
  Now, consider the integration variables $\xi_i$ as a perturbation of
  the vertices $x_i$, respectively. 
  Since we require continuity of
  some derivatives of the velocity field, standard bilinear mappings
  of the original rectangles to the perturbed quadrilaterals is not
  sufficient. In fact, its normal derivative is discontinuous.   
  Instead, we define a new,
  curvilinear mesh by mapping each Cartesian cell $T$ with vertices
  $x_i$ to the cell $\widetilde T$ with vertices $\xi_i$. This mapping
  is realized by a
  bicubic function $\Psi_T$ defined by the Hermitian interpolation
  conditions
  \begin{xalignat*}{2}
    \Psi_T(x_i)&= \xi_i,
    & \partial_x\Psi_T(x_i) & = (1,0)^T, \\
    \partial_x\partial_y\Psi_T(x_i) &= 0,
    & \partial_y\Psi_T(x_i) & = (0,1)^T.
  \end{xalignat*}
  The function $\Psi$ defined on $\Omega$ by concatenation of the
  $\Psi_T$ is continuously differentiable by this definition.
  Clearly, $\operatorname{det}\nabla\Psi$ is uniformly positive if the
  balls for integration are sufficiently small. 
  Thus,
  $\nabla \Psi$ is invertible and its inverse is continuous over the
  whole mesh. Hence, the canonical interpolation operators for
  $\widetilde T$ defined by pull-back are defined consistently with
  its neighbors.
  The mollified interpolation
  operator is constructed by integrating the canonical integration
  over all such quadrilaterals generated by
  integration over the vertices as in~\eqref{eq:vertex-integral}, yielding the replacements
  \begin{align*}
    \int\limits_{F_{ij}} \bu\cdot\bn \,ds &\longrightarrow \quad
    \int\limits_{B_i} \eta_i
    \int\limits_{B_j} \eta_j
    \int\limits_{\widetilde F_{ij}}
    \bu\cdot\bn\,ds \,d\xi_i\,d\xi_j,\\
    \int\limits_{F_{ij}} \partial_n\bu\cdot\bn \,ds &\longrightarrow \quad
    \int\limits_{B_i} \eta_i
    \int\limits_{B_j} \eta_j
    \int\limits_{\widetilde F_{ij}}
    \partial_n\bu\cdot\bn\,ds \,d\xi_i\,d\xi_j.
  \end{align*}
  Here, $F_{ij}$ is the edge between $x_i$ and $x_j$ and
  $\tilde F_{ij}$ is the (curved) edge between $\xi_i$ and
  $\xi_j$. This set of degrees of freedom has the commutation property
  for each of the mapped quadrilaterals, such that it holds for the
  integrals by linearity. The interpolation operator $\tilde I^c_V$
  constructed this way is bounded on $H^1(\Omega)$, even on
  $W^{1,1}(\Omega)$, since all point evaluations have been replaced by
  integrals. It is not a projection though, since it does not preserve
  piecewise polynomials in $\bV^c_k$. This is achieved by applying its
  inverse on the discrete space. Thus,
  \begin{gather}
    \label{eq:interpolation-final}
    I^c_V = \left(\widetilde I^c_{V|_{\bV^c_k}}\right)^{-1} \widetilde I^c_V.
  \end{gather}
  Due to its tensor product structure, this construction applies to
  three dimensions as well.  }

{
\subsubsection{Hanging nodes}

Since we are using quadrilateral, even rectangular meshes, local grid
refinement inevitably leads to irregular meshes, i.e., not every edge
of a cell is also a complete edge of its neighboring cell. Consistent
with our implementation, we restrict this irregularity to one-irregular
meshes, that is, any edge of a cell is shared by at most two cells on
the other side of the edge. Thus, the generic situation is the patch
in Figure~\ref{fig:refined-edge}.
\begin{figure}[tp]
  \centering
  \includegraphics[width=.4\textwidth]{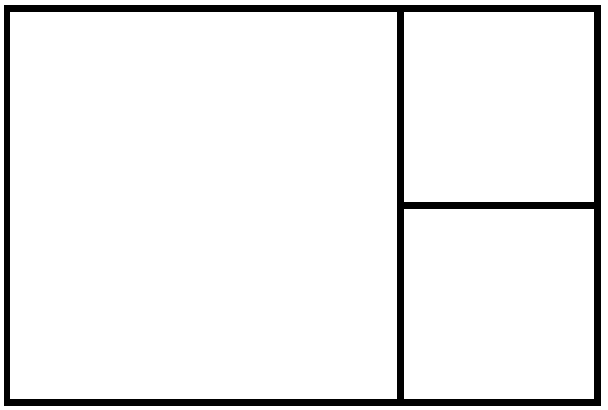}
  \caption{A one-irregular edge with a coarse cell on the left and two
    fine cells on the right.}
  \label{fig:refined-edge}
\end{figure}
We maintain the consistency of the finite element spaces by the
technique of eliminating hanging nodes. To this end, we observe that
continuity over an irregular edge can only be enforced if the traces
of the finite element spaces from both sides coincide. Therefore, only
linear combinations of basis functions are allowed on the refined side
which have traces in the shape function space on the coarse side. As a
result, the node functionals on the refinement edge are reduced to the
ones on the coarse side and the interpolation operator is only
evaluated on the coarse side as well. Hence, the computation of
equation~\eqref{eq:1} applies.  { The same technique can
  be applied to the construction of the operator $I^c_V$: here, no
  independent integral as in~\eqref{eq:vertex-integral} is introduced
  for a hanging vertex. Instead, the position of its perturbations is
  determined by its neighbors and conformity of $\Psi$ from both sides
  of the refinement edge.}

{The restriction to one-irregular meshes} simplifies the implementation
and the analysis. In particular,
shape regularity implies local quasi-uniformity for such meshes, such
that inverse estimates and the estimates obtained by
quasi-interpolation operators hold with the same asymptotics and only
modified constants.
}}

\subsubsection{Higher order elements}

Higher order versions of this element can be constructed by choosing
the velocity spaces $\Q_{m+1,m} \times \Q_{m,m+1}$ and the matching
pressure spaces $\Q_{m,m}$. Then, for every degree of freedom
corresponding to a mean value in~\eqref{eq:stokes_conv:1}, add moments
with respect to all Legendre polynomials of degree up to
$m-2$. Furthermore, add appropriate moments over the volume.
Unisolvence follows by the same tensor product argument and the
proof of Lemma~\ref{lemma:commutation} becomes an exercise in
excessive integration by parts.

\subsection{Divergence-conforming Interior Penalty Method}
{ For a given partition $\T_k$ of $\Omega$, we extend the notions of continuous and
differentiable function spaces to so called broken spaces, such that
for instance $C(\T_k)$ and $H^1(\T_k)$ are the spaces of functions
such that the restriction to each cell $T\in\T_k$ is in $C(T)$
and $H^1(T)$, respectively. For any two quantities $A$ and $B$, we use the notation $A\lesssim B$ if we can find a constant $C>0$ independent of the meshsize satisfying $A \le CB$.}
Let $F$ be an interior face in $\F_k$ such that the two cells $T_1$ and $T_2$ are adjacent
to $F$ in the point $x$. For a function $u\in {{C}(\T_k)}$, let $u_1(x)$
and $u_2(x)$ be the traces of $u$ in $x$ from cells $T_1$ and $T_2$,
respectively. Then, we define the sum operator
\begin{gather*}
  \mvl{u}(x) :=
  u_1(x)+u_2(x).
\end{gather*}
Let $\n_1$ and $\n_2$ be the outward normal vector
to $T_1$ and $T_2$, respectively. Then, by nature of its definition,
the sum operator applied to multiples of the normal vector
transforms to a jump:
\begin{gather*}
  \mvl{u\odot\n} = u_1\odot\n_1 + u_2\odot\n_2 = (u_1-u_2)\odot\n_1,
  \quad
  \mvl{\d_n u} = \d_{\boldsymbol{n_1}} (u_1-u_2),
\end{gather*}
where again ``$\odot$'' is the generic multiplication operator.
Whenever the jump appears squared and the ambiguity of the choice of
$T_1$ and $T_2$ is without effect, we use the short hand notation
\begin{gather*}
  \jmp u(x) = u_1(x)-u_2(x).
\end{gather*}

We also introduce the mesh sizes $h_T=|T|^{1/2}$ for $T \in \T_k$ and
$h_F$ associated with a face $F \in \F_k$ separating the two cells
$T^+$ and $T^-$ as the minimum of the length of $T^+$ and $T^-$
orthogonal to $F$. The {divergence-conforming interior
  penalty method henceforth referred to as $\Hdiv0$}-IP formulation of
the Stokes problem involves seeking $(\bu_k,p_k) \in \bV_k \times Q_k$
such that there holds
\begin{gather*}
  \hat{a}_k^{IP}(\bu_k,\bv) - (p_k,\div \bv)_{\T_k} - (q,\div \bu_k)_{\T_k} = \form(\bff,\bv)_{\T_k}
  \quad \forall\; (\bv,q) \in \bV_k\times Q_k,
\end{gather*}
where the elliptic bilinear form ${\hat{a}_k^{IP}(.,.)}$
implements the interior penalty method
\begin{multline*}
\hat{a}_k^{IP}(\bu,\bv) =  \form(\nabla \bu,\nabla \bv)_{\T_k} + 
\forme(\gamma h_F^{-1}\jmp{\bu},\jmp{\bv})_{\F_k} -
\\
\frac12\forme(\mvl{\nabla \bu},\mvl{\bv\otimes \n})_{\F_k} -
\frac12\forme(\mvl{\nabla \bv},\mvl{\bu\otimes \n})_{\F_k}.
\end{multline*}
The local mesh size $h_F$ is the extend of the cells adjacent to the
edge $F$ in orthogonal direction to $F$ and $\gamma>0$ is the penalty
parameter. In the presence of hanging nodes, we can simply choose
$h_F$ as the minimum of this extend over the two adjacent cells.
Since this mesh dependent bilinear form
${\hat{a}_k^{IP}(.,.)}$ is not well-defined on $\bV$, we
adopt the standard trick of introducing a lifting operator
$\mathcal L_S: \bV(k) \to \Sigma_{m+1}$ which is defined as,
\begin{gather*}
  \form(\mathcal L_S\bv,\tau)
  =\frac12\forme(\mvl{\tau},\mvl{\bv\otimes\n})_{\F_k}.
\end{gather*}
It is established
in ~\cite{PerugiaSchoetzau02hp,SchoetzauSchwabToselli03} that this
operator $\mathcal L_S $ admits the upper bound :
 \begin{gather}
 \label{eq:lift estimate}
 \norm{\mathcal L_S\bv }_{0,\Omega}^2 
 \le C_{l} 
  \norm{ h_F^{-\frac{1}{2}}\jmp{\bv} }^2_{\F_k} , \quad \bv \in \bV(k).
\end{gather}
where $C_l>0$ depends on the shape regularity of the mesh.

This operator enables us to extend the bilinear form  $ \hat{a}_k^{IP}$
to $\ak:\bV(k) \times \bV(k)\rightarrow \R$ defined as:
\begin{gather}
\label{eq:lifted-ip}
\ak(\bu,\bv) =  \form(\nabla \bu,\nabla \bv)_{\T_k} -{\form(\mathcal L_S\bv,\nabla \bu)} -{\form(\mathcal L_S\bu,\nabla \bv)} +
 \forme(\gamma h_F^{-1}\jmp{\bu},\jmp{\bv})_{\F_k}.
\end{gather}
The two forms are equivalent on the discrete space and only differ in
their smoothness assumptions on the continuous space.
Consequently, our IP method amounts to finding $(\bu_k,p_k) \in \bV_k \times Q_k$ :
 \begin{gather}
  \label{eq:lifted-ip-Stokes}
  \ak(\bu_k,\bv) - (p_k,\div \bv) - (q,\div \bu_k) ={ \form(\bff,\bv)}
    \quad \forall\; (\bv,q) \in \bV_k\times Q_k
 \end{gather}
 holds true.
 The discrete weak formulation \eqref{eq:lifted-ip-Stokes} is
 consistent with the continuous weak formulation \eqref{eq:Stokes} in
 the sense that the solution ${(}\bu, { p)} \in \bH_0^1(\Omega){\times Q}$ to
 \eqref{eq:Stokes} satisfies~\eqref{eq:lifted-ip-Stokes} albeit, only
 for $(\bv,q) \in \bH_0^1(\Omega)\times Q_k$
 (see~\cite[Sec.~2.2]{BonitoNochetto10}). This lack of consistency on
 $\bV_k$ motivates us to decompose the discretization error term into
 the $\bH^1$-conforming and non-conforming components and address each
 component separately.
  
 On the same space, we introduce the following discontinuous Galerkin
 (DG)-norm
\begin{gather*}
    \norm{\bu}_{1,k}^2 = \norm{\nabla\bu}_{\T_k}^2 + \gamma \norm{ h_F^{-\frac{1}{2}}\jmp{\bu}}^2_{\F_k}.
  \end{gather*}
It is shown in~\cite{BonitoNochetto10} that for sufficiently large penalty parameter $\gamma >0$, there exists a positive constant $C_{L} < \ 1$:
\begin{gather}
 \label{coer}
 \ak(\bu,\bu) \ge C_{L} \ \norm{\bu}^2_{1,k} \quad \bu \in \bV_k,
\end{gather}
also, for any $\gamma >1$, there exists a constant $C_{U} \ > \ 1,$
\begin{gather}
 \label{cont}
 \ak(\bu,\bv) \le C_{U} \ \norm{\bu}_{1,k} \norm{\bv}_{1,k}\quad \bu, \ \bv \in \bV(k){.}
\end{gather}
Below we quote the theorem which allows us to decouple the divergence
free velocity and pressure. The proof can be found
in~\cite{KanschatSharma14}.
\begin{proposition}[Decoupling of velocity and pressure]
  \label{Proposition}
  The velocity solution $\bu_k \in \bV_k$ and the pressure solution
  $p_k\in Q_k$ of the
  Stokes equation~\eqref{eq:lifted-ip-Stokes} can be computed
  independently. {In particular,} the velocity solves the reduced weak form
  \begin{gather}
  \begin{split}
    \label{eq:velocity}
    \ak(\bu_k,\bv_k^0) &= \form(\bff^0,\bv_k^0),
    \quad \forall  \bv_k^0\in \bV_k^0,
  \end{split}
  \end{gather}
  where $\bff^0$ is the divergence-free component of
  $\bff$.
\end{proposition}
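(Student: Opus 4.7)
The plan is to establish the decoupling in three stages: first show that the computed discrete velocity $\bu_k$ lies in the discrete divergence-free subspace $\bV_k^0$, then reduce the mixed formulation to a pure velocity equation on $\bV_k^0$, and finally replace $\bff$ by its divergence-free component $\bff^0$ via an $L^2$-orthogonality argument. Once the velocity is known, the pressure is recovered from the remaining equation tested against $\bV_k^\perp$ through the inf-sup property furnished by the commuting diagram.

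For the first stage, test~\eqref{eq:lifted-ip-Stokes} with $(\bv,q)=(\mathbf{0},q_k)$ for an arbitrary $q_k\in Q_k$. This gives $(q_k,\div \bu_k)=0$. Since $\div \bV_k\subseteq Q_k$ by the commuting diagram for $I_{V_k}$ and $I_{Q_k}$, the quantity $\div\bu_k$ is itself an admissible test function, hence $\div\bu_k=0$ and $\bu_k\in \bV_k^0$. Next, test~\eqref{eq:lifted-ip-Stokes} with $(\bv_k^0,0)$ for $\bv_k^0\in\bV_k^0$; the term $(p_k,\div \bv_k^0)$ vanishes because $\div\bv_k^0=0$, leaving
\begin{gather*}
  \ak(\bu_k,\bv_k^0)=\form(\bff,\bv_k^0), \qquad \forall\,\bv_k^0\in\bV_k^0.
\end{gather*}

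For the third stage, invoke the $L^2$ Hodge decomposition of the data, $\bff=\bff^0+\nabla\phi$, where $\bff^0\in \Hdivo0(\Omega)$ is divergence-free with vanishing normal trace and $\phi\in H^1(\Omega)$ is the scalar potential of the curl-free part. Because $\bV_k^0\subset \Hdivo0(\Omega)$ and its elements satisfy both $\div \bv_k^0 = 0$ and $\bv_k^0\cdot\bn=0$ on $\Gamma$, integration by parts gives
\begin{gather*}
  \form(\nabla\phi,\bv_k^0)=-\form(\phi,\div \bv_k^0)+\forme(\phi,\bv_k^0\cdot\bn)_{\Gamma}=0,
\end{gather*}
so that $\form(\bff,\bv_k^0)=\form(\bff^0,\bv_k^0)$ and~\eqref{eq:velocity} follows. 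The pressure is then determined independently by testing~\eqref{eq:lifted-ip-Stokes} with $(\bv_k^\perp,0)$ over $\bV_k^\perp$: since the restriction of $\div$ to $\bV_k^\perp$ is an isomorphism onto $Q_k$ (kernel is $\bV_k^0$ and $\div$ is surjective onto $Q_k$ by the commuting diagram), the resulting saddle-point reduces to an invertible system for $p_k$ whose right-hand side uses only the already-computed $\bu_k$.

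The main technical point is the $L^2$-orthogonality in the third stage: it hinges on the fact that the discrete divergence-free subspace $\bV_k^0$ is not merely a coincidental kernel but a genuine subspace of $\Hdivo0(\Omega)$ with exact vanishing normal trace on $\Gamma$. This in turn relies on the conformity $\bV_k\subset \Hdivo0(\Omega)$ of the Raviart-Thomas family together with the commuting diagram identifying the kernel of $\div:\bV_k\to Q_k$ with the pointwise divergence-free discrete functions. All other steps are direct consequences of the mixed formulation together with the discrete de Rham structure established earlier in this section.
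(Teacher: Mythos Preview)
The paper does not supply its own proof of this proposition; it simply quotes the result and refers to~\cite{KanschatSharma14}. Your argument is correct and is essentially the standard one: the commuting diagram forces $\div\bu_k=0$ pointwise, restriction of the mixed system to $\bV_k^0$ eliminates the pressure term, and the Helmholtz decomposition $\bff=\bff^0+\nabla\phi$ together with $\bV_k^0\subset\Hdivo0(\Omega)$ yields the $L^2$-orthogonality $\form(\nabla\phi,\bv_k^0)=0$ via the duality formula for $H^1$ against $\Hdiv0$. The recovery of $p_k$ from the complementary test functions through surjectivity of $\div:\bV_k\to Q_k$ is also correct.

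One notational point: your use of $\bV_k^\perp$ for the complement of $\bV_k^0$ inside $\bV_k$ clashes with the paper's definition~\eqref{decomp_space}, where $\bV_k^\perp$ is the $\ak$-orthogonal complement of $\bV_k^c$ \emph{within} $\bV_k^0$ (and is therefore itself divergence-free). Rename your complementary space to avoid this collision.
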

Based on this result, we focus our {contraction property} on the
divergence-free velocity $\bu_k\in \bV_k^0$
satisfying~\eqref{eq:velocity}, such that for the discretization error
$\be_k=\bu -\bu_k$ there holds $\div\be_k = 0$.  The divergence-free
subspace $\bV^0_k \subset \bV_k$ is decomposed into
 \begin{gather}
 \begin{split}
 \label{decomp_space}
 \bV_k^0=\bV_k^c {\oplus} \bV_k^{\perp}
 \end{split}
 \end{gather} 
 where $\bV_k^{\perp}$ is the orthogonal complement of $\bV_k^c$ with respect to $\ak(.,.)$ in $\bV_k^0.$ { Consequently, we arrive at the following lemma.}

 \begin{lemma}[Decomposition of discrete velocity]
 \label{lemma:decomp_velocity1}
 { Any vector $\bv_k^{0} \in \bV_k^{0}$ can be uniquely decomposed in the form 
 \begin{gather}
 \label{eq:decomp}
 \bv_k^0 = \bv_k^c + \bv_k^{\perp}, 
 \end{gather}
where  $\bv_k^c \in 
  \bV_k^c$ is the $\ak(.,.)$ orthogonal projection of $\bv_k^0$ onto $\bV_k^c$ and
   $\bv_k^{\perp}$ satisfies the following inequality}
 \begin{gather}
   \label{eq:best_approx1}
   \norm{\bv_k^{\perp}}_{1,k}^2 \le  {C_{L}}^{-1}  \ak(\bv_k^0-\bw_k^c,\bv_k^0-\bw_k^c)  \quad \forall \bw_k^c \in \bV_k^{c}.
 \end{gather}
 \begin{proof}
   The unique representation~\eqref{eq:decomp} holds true due to
   decomposition~\eqref{decomp_space}. Additionally, thanks
   to~\eqref{coer} we have
  \begin{align}
  \norm{\bv_k^{\perp}}_{1,k}^2 &\le  {C_{L}}^{-1}  \ak(\bv_k^{0}-\bv_k^{c},\bv_k^{0}-\bv_{k}^{c})\nonumber\\
   &= {C_{L}}^{-1} \ \inf_{\bw_k^c \in \bV_k^{c}}  \ak(\bv_k^0-\bw_k^c,\bv_k^0-\bw_k^c),\label{decomp_a}
  \end{align}
  { where the best approximation property~\eqref{decomp_a} holds true because $\bv_k^{c}$ is the $\ak(.,.)$ orthogonal projection of $\bv_k$ on the space $\bV_k^c$.}
  \end{proof}
  \end{lemma}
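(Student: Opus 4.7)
The plan is to dispatch the uniqueness of the decomposition purely from the direct-sum structure~\eqref{decomp_space} and then obtain the quantitative estimate by chaining coercivity with the best-approximation property of an orthogonal projection. The two ingredients are already set up: the splitting $\bV_k^0 = \bV_k^c \oplus \bV_k^\perp$ is an $\ak$-orthogonal direct sum by the very definition of $\bV_k^\perp$, and $\ak$ is coercive on $\bV_k$ with constant $C_L$ by~\eqref{coer}.

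First I would invoke~\eqref{decomp_space} to assert that every $\bv_k^0 \in \bV_k^0$ admits a unique representation $\bv_k^0 = \bv_k^c + \bv_k^\perp$ with $\bv_k^c \in \bV_k^c$ and $\bv_k^\perp \in \bV_k^\perp$, and to identify $\bv_k^c$ as the $\ak$-orthogonal projection of $\bv_k^0$ onto $\bV_k^c$. Next I would apply the coercivity bound~\eqref{coer} to $\bv_k^\perp$ to obtain $\norm{\bv_k^\perp}_{1,k}^2 \le C_L^{-1}\, \ak(\bv_k^\perp, \bv_k^\perp) = C_L^{-1}\,\ak(\bv_k^0 - \bv_k^c, \bv_k^0 - \bv_k^c)$.

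Finally, I would upgrade this equality to an inequality over all $\bw_k^c \in \bV_k^c$ using the standard Galerkin best-approximation argument: since $\bv_k^c$ is the $\ak$-orthogonal projection, for any $\bw_k^c \in \bV_k^c$ the difference $\bv_k^c - \bw_k^c$ lies in $\bV_k^c$ and is $\ak$-orthogonal to $\bv_k^0 - \bv_k^c$, so that
\begin{gather*}
\ak(\bv_k^0 - \bv_k^c, \bv_k^0 - \bv_k^c) \le \ak(\bv_k^0 - \bw_k^c, \bv_k^0 - \bw_k^c) \qquad \forall\, \bw_k^c \in \bV_k^c,
\end{gather*}
which combined with the previous inequality yields~\eqref{eq:best_approx1}.

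There is no real obstacle here; the proof is a one-line composition of coercivity with the Pythagoras-type identity that characterizes orthogonal projections. The only mild point of care is to confirm that $\bv_k^\perp$ genuinely lies in $\bV_k$ so that~\eqref{coer} is applicable (rather than requiring the extended continuity bound~\eqref{cont} valid on $\bV(k)$), but this is immediate since $\bV_k^\perp \subset \bV_k^0 \subset \bV_k$.
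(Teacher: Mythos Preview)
Your proposal is correct and follows essentially the same approach as the paper: invoke the direct-sum decomposition~\eqref{decomp_space} for uniqueness, apply coercivity~\eqref{coer} to $\bv_k^\perp = \bv_k^0 - \bv_k^c$, and then use the best-approximation property of the $\ak$-orthogonal projection to pass from $\bv_k^c$ to an arbitrary $\bw_k^c \in \bV_k^c$. Your remark that $\bv_k^\perp \in \bV_k$ so that~\eqref{coer} is applicable is a valid clarification that the paper leaves implicit.
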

 {
\section {Adaptive Interior Penalty Method} }
The implementation of the adaptive algorithm for the interior penalty (IP) method~\eqref{eq:velocity} is done according to the cycle :
\begin{gather}
\label{algo:afem}
  \text{SOLVE } \Longrightarrow \text{ ESTIMATE } \Longrightarrow \text{ MARK } \Longrightarrow \text{ REFINE. }
   \end{gather}
   { Based on an initial partition $\T_0$ of $\Omega$, we
     let the triple $\{(\T_k, \bV_k,\bu_k )\}_{k \ge 0}$ denote the
     sequence of partitions, discrete solution spaces and solutions
     respectively generated by a recursive application of adaptive
     algorithm~\eqref{algo:afem}.  } Here, the step `SOLVE' amounts to
   calculating the numerical solution of~\eqref{eq:velocity} which is
   realized by a direct solver. Below, we will discuss the remaining
   steps of the adaptive IP method including their properties which
   are crucial for proving the main results of this paper.

\subsection{Estimate}
{For the step `ESTIMATE' we consider the divergence-free, residual-type a posteriori error estimator as introduced in~\cite{KanschatSharma14}. We will discuss this estimator in this subsection, prove its reliability and other properties associated with it.

In order to introduce the divergence-free estimator, we let $\bu_k$ be the IP approximation to~\eqref{eq:velocity} and $\bff^0$ denote the divergence-free component of $\bff$.}
For { any} $T \in {\T}_{k}(\Omega),$ we denote the set of faces of the cell $T$ by $\F_k(T)$ and set the following notation:
\[
\eta_{k}^2(\bu_{k},T):=\eta_T^2(\bu_{k}) \  + \sum\limits_{F \in \F_k(T)} \eta_F^2(\bu_{k})
\]
\par\noindent
where the cell and edge residuals are: 
\begin{align*}
\eta_{T}(\bu_{k}) :=& \ h_T \ \| \bff^0 + \Delta \bu_{k}  \|_{0,T} , \\
 \eta_{F}(\bu_k) := & \ h_F^{1/2} \ \| {\mvl{ \partial_n \bu_{k}}} \|_{0,F} , \quad  F\in \F_k(T)
\end{align*}
respectively. Corresponding to any $\hat{\T}_k\subset \T_k$, we define the { divergence-free} estimator $\eta_k(\bu_k,\hat{\T}_k)$ as follows:
\begin{gather}
  \label{eq:St-estimateV0}
  \eta_{k}^2(\bu_k,\hat{\T}_k):= \sum_{T \in \hat{\T}_k} \eta_k^2(\bu_{k},T).
\end{gather}
In particular, we use the shorthand $\eta_k^2(\bu_k)\equiv \eta_k^2(\bu_k, \T_k)$.
{ For divergence-free right hand side $\bff$,
this estimator is optimal in the sense of achieving the optimal convergence rate as verified in~\cite{KanschatSharma14}.}
{ In this case, the estimator can be implemented as the standard elliptic estimator. For $\div \bff \neq 0$, the divergence-free part has to be extracted. On the other hand, it was verified in~\cite{KanschatSharma14} that this estimator is robust with respect to {nonzero and even large values of} $\div\bff$.
}

{ We also modify the quasi-interpolation operator
  $I_{\bV_k}^c:\bV(k) \rightarrow \bV_k^c$ in
  equation~\eqref{eq:interpolation-final}, such that it combines
  $\bH^1$-stability on
  $\bV$ with the treatment of the jumps
  following Lemma 6.6 in~\cite{BonitoNochetto10}.} Accordingly, there holds for some
  $C_{\text{interp}} >0$ depending only on the shape regularity of
  $\T_k$,
\begin{align}
\sum_{|\beta|} \norm{D^{\beta}(\bu_k- I_{\bV_k^c}\bu_k)}_{\T_k}^2 &\le C_{\text{interp}}\norm{ h_F^{\frac{1- |\beta|}{2}}\jmp {\bu_k}}^2_{\F_k}, \quad |\beta|\in\{0,1\}, \label{interp}\\
 \norm{(\bv_k- I_{\bV_k^c}\bv_k)}_{0,T} &\le 
 C_{\text{interp}} \norm{h D(\bv_k- I_{\bV_k^c}\bv_k)}_{L^2(\omega_T)} \quad \forall\bv_k \in\bV(k) \label{interp_conform},
\end{align}
where $\omega_T$ is defined as follows.
For any $T\in\T_k$,  
\begin{align}
&\omega_T \text{ denote the patch of }\text{cells that share an edge with }T \text{ and, } \label{defn:patch1}\\
& \text{for any subset of cells } \R_k \subset\T_k, \ \text{let }
\omega(\R_k)= \cup_{T \in \R_k }^{}\omega_T.\label{defn:patch2}.
\end{align}
\begin{remark}
In particular, taking $\bw_k^c=I_{\bV_k}^c \bv_k$ in~\eqref{eq:best_approx1}, using the coercivity~\eqref{coer} and taking into consideration the interpolation estimate~\eqref{interp}, we have
  \begin{gather}
  \label{eq:best_approx}
  \norm{\bv_k^{\perp}}_{1,k}^2 \lesssim  \norm{h_F^{-\frac{1}{2}}\jmp{\bv_k}}_{\F_k}^2.
  \end{gather}
\end{remark}
For the rest of the subsection, we present the properties of the estimator needed to prove the main two results of this article. We begin with the following lemma which bounds the jump terms from above by the estimator~\eqref{eq:St-estimateV0}. 
The proof follows the arguments given in~\cite{BonitoNochetto10}. We provide it below for completeness with some natural modifications.

\begin{lemma}[Estimator Control of the jump terms]
 \label{lemma:estimator_jump}
  Let $\bu_k$ be the IP approximation to the Stokes problem~\eqref{eq:velocity} and
  $ 0 < C_L <1$ be the coercivity constant from \eqref{coer}. Then,
  there is a constant $C_{J} >0$ depending only on the shape
  regularity of $\T_k$ such that for $\gamma > \frac{2C_J}{C_L}$,
  there holds
 \begin{gather}
 \label{eq:jumpcontrol}
 \gamma \norm{h_F^{-\frac{1}{2}}\jmp{\bu_k}}_{\F_k}^2  \le \frac{2C_J}{C_L}  \eta_k^2.
 \end{gather}
\end{lemma}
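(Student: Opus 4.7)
My plan is to test the discrete equation~\eqref{eq:velocity} against a function that isolates the jump term of $\bu_k$ and produces only estimator-type residuals on the other side. The natural candidate is
\[
\bw_k := \bu_k - I^c_{\bV_k}\bu_k,
\]
built from the $\bH^1$-stable quasi-interpolant of~\eqref{eq:interpolation-final} (modified as indicated before~\eqref{interp}). The commuting property $\div I^c_{\bV_k} = I^c_{Q_k}\div$ from Lemma~\ref{lemma:commutation} ensures $I^c_{\bV_k}\bu_k\in\bV_k^c\subset\bV_k^0$, hence $\bw_k\in\bV_k^0$ and is an admissible test function. Because $I^c_{\bV_k}\bu_k$ is globally $\bH^1$-conforming, $\jmp{\bw_k}=\jmp{\bu_k}$ and $\mathcal L_S I^c_{\bV_k}\bu_k=0$; moreover~\eqref{interp} delivers the stability bound $\|\bw_k\|_{1,k}^2\lesssim\|h_F^{-1/2}\jmp{\bu_k}\|_{\F_k}^2$.

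Substituting $\bv_k^0=\bw_k$ into~\eqref{eq:velocity} and expanding $a_k(\bu_k,\bw_k)$ via the lifted form~\eqref{eq:lifted-ip}, the penalty term immediately contributes $\gamma\|h_F^{-1/2}\jmp{\bu_k}\|_{\F_k}^2$. I then integrate $(\nabla\bu_k,\nabla\bw_k)_{\T_k}$ by parts cellwise: the volume parts combine with $(\bff^0,\bw_k)$ from the right-hand side to form the cell residuals $(\bff^0+\Delta\bu_k,\bw_k)_T$, while interior faces produce pairings of $\mvl{\partial_n\bu_k}$ against $\bw_k$. Using the algebraic identity $\mvl{ab}=\tfrac12(\mvl{a}\mvl{b}+\jmp{a}\jmp{b})$ to split these face terms and combining them with the symmetric lifting contributions $(\mathcal L_S\bw_k,\nabla\bu_k)$ and $(\mathcal L_S\bu_k,\nabla\bw_k)$, the vanishing of $\mathcal L_S$ on the conforming piece and the bound~\eqref{eq:lift estimate} collapse the lifting remainders onto face data. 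The outcome is an identity of the form
\[
\gamma\bigl\|h_F^{-1/2}\jmp{\bu_k}\bigr\|_{\F_k}^2 = \sum_{T\in\T_k}\form(\bff^0+\Delta\bu_k,\bw_k)_T + \mathcal R(\mvl{\partial_n\bu_k},\bw_k),
\]
where $\mathcal R$ collects the face- and lifting-controlled residuals.

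Cauchy-Schwarz then pairs each cell residual with $h_T^{-1}\|\bw_k\|_{0,T}$, which~\eqref{interp_conform} bounds by $\|\nabla\bw_k\|_{0,\omega_T}$; a standard trace estimate together with~\eqref{eq:lift estimate} handles the face and lifting residuals via $\|\bw_k\|_{1,k}$. After assembly over $\T_k$ using the shape regularity of the mesh, this yields
\[
\gamma\bigl\|h_F^{-1/2}\jmp{\bu_k}\bigr\|_{\F_k}^2 \le C_J\,\eta_k\,\|\bw_k\|_{1,k}.
\]
Inserting the stability bound for $\|\bw_k\|_{1,k}$ from the first paragraph and applying Young's inequality with a weight tuned to $\gamma$, a fraction of $\gamma\|h_F^{-1/2}\jmp{\bu_k}\|_{\F_k}^2$ can be absorbed back into the left-hand side; the hypothesis $\gamma>2C_J/C_L$ is exactly what makes this absorption leave the stated constant $2C_J/C_L$ on the right.

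The main obstacle is the second step: the face-integral bookkeeping must simultaneously account for the cellwise integration by parts on a DG gradient, the two symmetrization terms produced by $\mathcal L_S$, and the precise cancellations enabled by $\jmp{I^c_{\bV_k}\bu_k}=0$ and $\mathcal L_S I^c_{\bV_k}\bu_k=0$. Ensuring that every face contribution not captured by $\mvl{\partial_n\bu_k}$ or by the penalty term either cancels exactly or is absorbed via~\eqref{eq:lift estimate} requires careful application of $\mvl{ab}=\tfrac12(\mvl{a}\mvl{b}+\jmp{a}\jmp{b})$ in several places, and it is the tracking of the constants here that dictates the precise $\gamma$-threshold in the statement.
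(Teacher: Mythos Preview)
Your approach is essentially the paper's: both test~\eqref{eq:velocity} with $\bw_k=\bu_k-I^c_{\bV_k}\bu_k\in\bV_k^0$, exploit $\jmp{I^c_{\bV_k}\bu_k}=0$ and $\mathcal L_S I^c_{\bV_k}\bu_k=0$, integrate by parts cellwise to expose the cell and face residuals, and finish via~\eqref{interp} and~\eqref{eq:lift estimate}. The only organizational difference is that the paper first invokes coercivity, bounding $C_L\gamma\norm{h_F^{-1/2}\jmp{\bu_k}}_{\F_k}^2\le\ak(\bw_k,\bw_k)$ and then splitting $\ak(\bw_k,\bw_k)=(\bff^0,\bw_k)-\ak(I^c_{\bV_k}\bu_k,\bw_k)$, whereas you isolate the penalty term directly from $\ak(\bu_k,\bw_k)=(\bff^0,\bw_k)$; this is why $C_L$ appears in the paper's constant but is never actually used in your route, so your final sentence attributing the threshold $\gamma>2C_J/C_L$ to the absorption step is not quite accurate for your own organization (your absorption constant comes from $C_{\text{interp}}$ and $C_l$, not $C_L$).
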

\begin{proof}Due to coercivity of $\ak(.,.)$ and $\jmp{I_{\bV_k}^c \bu_k}=0$
we have
{
\begin{align}
C_L \gamma \norm{h_F^{-\frac{1}{2}}\jmp{\bu_k}}_{\F_k}^2 &=  C_L\gamma \norm{h_F^{-\frac{1}{2}}\jmp{\bu_k - I_{\bV_k}^c \bu_k}}_{\F_k}^2 \nonumber \\
&\le C_L\norm{\bu_k - I_{\bV_k}^c \bu_k}_{1,k}^2 \nonumber \\
&\le \ak(\bu_k - I_{\bV_k}^c \bu_k,\bu_k - I_{\bV_k}^c \bu_k) \nonumber \\
&=  \Big((\bff^0, \bu_k - I_{\bV_k}^c \bu_k)_{\T_k} \  - \ 
\ak( I_{\bV_k}^c \bu_k,\bu_k - I_{\bV_k}^c \bu_k)\Big).\label{eq: ineq1}
\end{align}
Note that the first term on the right hand side of~\eqref{eq: ineq1}
is a consequence of~\eqref{eq:velocity} with the choice of the
divergence-free test function $\bv_k^0=\bu_k - I_{\bV_k}^c \bu_k $. It
also applies on faces with hanging nodes.
Now since $\mathcal L_S(I_{\bV_k}^c \bu_k)=0$ and $\jmp{I_{\bV_k}^c \bu_k}=0,$ the last term of~\eqref{eq: ineq1} can be written as 
\begin{align}
\ak(I_{\bV_k}^c \bu_k,\bu_k - I_{\bV_k}^c \bu_k)&=
\Big((\nabla I_{\bV_k}^c \bu_k, \nabla(\bu_k - I_{\bV_k}^c \bu_k) )_{\T_k} -({\mathcal{L}}_S \bu_k,\nabla I_{\bV_k}^c \bu_k)_{\T_k}\Big) \nonumber \\
&=\Big((\nabla \bu_k, \nabla(\bu_k - I_{\bV_k}^c \bu_k) )_{\T_k} -({\mathcal{L}}_S \bu_k,\nabla I_{\bV_k}^c \bu_k)_{\T_k}  \nonumber\\
&- \norm{\nabla(\bu_k - I_{\bV_k}^c \bu_k)}_{\T_k}^2\Big) \nonumber \\
&=
\Big( (-\Delta \bu_h, \bu_k - I_{\bV_k}^c \bu_k)_{\T_k} + ({\mathcal{L}}_S \bu_k,\nabla (\bu_k -I_{\bV_k}^c \bu_k))_{\T_k}  \nonumber \\
&- \norm{\nabla(\bu_k - I_{\bV_k}^c \bu_k)}_{\T_k}^2 
\Big) +  \frac12\forme(\mvl{\nabla \bu},\mvl{\bv\otimes \n})_{\F_k} \label{eq: iq2}
\end{align}
where the terms on the right hand side of~\eqref{eq: iq2} arise due to
a cellwise application of Green's formula. Hence, we have
\begin{align*}
&(\bff^0, \bu_k - I_{\bV_k}^c \bu_k)-\ak( I_{\bV_k}^c \bu_k,\bu_k - I_{\bV_k}^c \bu_k)\\
&\le \eta_k \big( \norm{h_T^{-1}(\bu_k - I_{\bV_k}^c \bu_k)}_{\T_k}^{}\ +\norm{ h_F^{-\frac{1}{2}}\jmp{\bu_k - I_{\bV_k}^c \bu_k}}_{\F_k}^{}\big) \ +\\
&\norm{\nabla(\bu_k - I_{\bV_k}^c \bu_k)}_{\T_k}^2  +
\norm{{\mathcal{L}}_S\bu_k}_{\T_k}\norm{\nabla(\bu_k - I_{\bV_k}^c \bu_k)}_{\T_k},
\end{align*}
where we have used the shortland $\eta_k\equiv \eta_k(\bu_k)$.
Finally, due to the properties satisfied by the interpolation operator and the stability estimate~\eqref{eq:lift estimate} for $L_S$, we can conclude the desired inequality.
}
\end{proof}
 { The proof of reliability of the estimator $\eta_k(\bu_k)$ follows the standard argument of decomposing the discretization error $\bu- \bu_k$ into a conforming and non-conforming component. Here, it is based on the space $\bV_k^c$ with commuting interpolation operator $I_{\bV_k }^c$.
 }
 \begin{proposition}[Reliability of the estimator] Let $\bu_k$ be the IP approximation to the Stokes
   problem and $\bu$ be the solution to the weak formulation of the
   Stokes problem. Then, there is a constant $C_{rel}>0$  depending only on the shape regularity of $\T_k$ such that
\begin{gather}
\label{eq:reliability}
\ak(\bu-\bu_k,\bu-\bu_k) \le C_{rel}\ \eta_k^2
\end{gather}
holds.
\end{proposition}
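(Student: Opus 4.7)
The plan is to follow the standard conforming/non-conforming decomposition in the spirit of Bonito--Nochetto, but executed inside the divergence-free subspace, exploiting that both $\bu\in\bV^0$ and $\bu_k\in\bV_k^0$ and that the commuting operator $I_{\bV_k}^c$ maps $\bV_k^0$ into $\bV_k^c\subset\bV^0$. Concretely, I would split the error as
\begin{gather*}
  \bu-\bu_k
  = \underbrace{(\bu - I_{\bV_k}^c\bu_k)}_{\bphi^c\,\in\,\bV^0}
  \;-\;\underbrace{(\bu_k - I_{\bV_k}^c\bu_k)}_{\bphi^\perp\,\in\,\bV_k},
\end{gather*}
noting that $\bphi^c$ is $\bH^1$-conforming (so $\jmp{\bphi^c}=0$ and $\mathcal L_S\bphi^c=0$) and divergence-free by the commuting diagram, while $\bphi^\perp$ is a discrete function whose $\|\cdot\|_{1,k}$-norm is controlled purely by jumps via~\eqref{interp} and Lemma~\ref{lemma:estimator_jump}, giving $\|\bphi^\perp\|_{1,k}^2\lesssim \|h_F^{-1/2}\jmp{\bu_k}\|_{\F_k}^2\lesssim \eta_k^2$.

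Using continuity~\eqref{cont} of $\ak$ on $\bV(k)$ together with $\|\bu-\bu_k\|_{1,k}^2\le 2\|\bphi^c\|_{1,k}^2+2\|\bphi^\perp\|_{1,k}^2$, the proof reduces to showing $\|\nabla\bphi^c\|_{\T_k}\lesssim \eta_k$. For this, I would start from
\begin{gather*}
  \|\nabla\bphi^c\|^2
  = (\nabla\bu,\nabla\bphi^c) - (\nabla\bu_k,\nabla\bphi^c)_{\T_k}
  - (\nabla\bphi^\perp,\nabla\bphi^c)_{\T_k},
\end{gather*}
then use the continuous Stokes equation~\eqref{eq:Stokes_V0} (which applies because $\bphi^c\in\bV^0$) to replace $(\nabla\bu,\nabla\bphi^c)$ by $(\bff^0,\bphi^c)$, and cellwise integration by parts on $(\nabla\bu_k,\nabla\bphi^c)_{\T_k}$ to produce the residual $(\bff^0+\Delta\bu_k,\bphi^c)_{\T_k}$ and an edge term $\langle\mvl{\partial_n\bu_k},\bphi^c\rangle_{\F_k}$; continuity of $\bphi^c$ ensures the edge term collapses to $\mvl{\partial_n\bu_k}\cdot\bphi^c$ as in~\eqref{eq:1}.

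Next I would insert Galerkin orthogonality~\eqref{eq:velocity} with test function $\bphi^c_k:=I_{\bV_k}^c\bphi^c\in\bV_k^c\subset\bV_k^0$ (which lies in $\bV_k^0$ by the commuting diagram) to replace $\bphi^c$ by $\bphi^c-\bphi^c_k$ in the residual and edge terms, while picking up a lifting contribution $(\mathcal L_S\bu_k,\nabla\bphi^c_k)$ since $\mathcal L_S\bphi^c_k=0$. Applying the standard $\bH^1$-stable approximation estimates~\eqref{interp}--\eqref{interp_conform} for $\bphi^c-\bphi^c_k$, Cauchy--Schwarz, the lifting bound~\eqref{eq:lift estimate}, and Lemma~\ref{lemma:estimator_jump} to control $\|\mathcal L_S\bu_k\|$ and $\|\bphi^\perp\|_{1,k}$ by $\eta_k$, every term on the right-hand side takes the form $\eta_k\|\nabla\bphi^c\|$, yielding $\|\nabla\bphi^c\|\lesssim \eta_k$ after dividing.

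The main obstacle I anticipate is the bookkeeping around the lifting operator: one must verify that $\mathcal L_S\bphi^c$ and $\mathcal L_S\bphi^c_k$ vanish (because $\bphi^c,\bphi^c_k$ are $\bH^1$-conforming), so that the only surviving lifting contribution is $(\mathcal L_S\bu_k,\nabla\bphi^c_k)$ and it is absorbed through~\eqref{eq:lift estimate} and~\eqref{eq:jumpcontrol}. A secondary subtlety is ensuring $\bphi^c_k\in\bV_k^0$ so that Galerkin orthogonality applies; this hinges crucially on the commuting property established in Lemma~\ref{lemma:commutation} and its extension to the full mesh via $I^c_V$. Once these are in place, collecting all estimates gives $\ak(\bu-\bu_k,\bu-\bu_k)\le C_U\|\bu-\bu_k\|_{1,k}^2\lesssim \eta_k^2$ with $C_{rel}$ depending only on shape regularity, $\gamma$, and $C_L,C_U$.
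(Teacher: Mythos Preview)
Your proposal is correct and follows essentially the same strategy as the paper: split the error into a conforming part living in $\bV^0$ and a nonconforming remainder controlled by jumps, use the commuting interpolation $I_{\bV_k}^c$ to remain divergence-free so that partial Galerkin orthogonality~\eqref{eq:velocity} applies, and close with the lifting bound~\eqref{eq:lift estimate}, the interpolation estimates~\eqref{interp}--\eqref{interp_conform}, and Lemma~\ref{lemma:estimator_jump}. The only cosmetic difference is that the paper takes the conforming component of $\bu_k$ to be its $\ak$-orthogonal projection onto $\bV_k^c$ (from Lemma~\ref{lemma:decomp_velocity1}) and works directly with $\ak(\be_k,\be_k)=\ak(\be_k,\be_k^c-I_{\bV_k}^c\be_k^c)-\ak(\be_k,\bu_k^\perp)$, whereas you set the conforming component to $I_{\bV_k}^c\bu_k$ from the outset and bound $\|\nabla\bphi^c\|$ before invoking continuity~\eqref{cont}; the collection of residual, edge, lifting, and nonconforming terms that must be estimated is identical in both routes.
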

\begin{proof}
  We recall the notation for the discretization error $\be_k=\bu - \bu_k$ and we decompose
  $\bu_k=\bu_k^c+\bu_k^{\perp}$ into its $\bH^1$-conforming and
  non-conforming components, respectively, so that by the partial
  Galerkin orthogonality enjoyed on ${{\bH_0^1(\Omega)}} \cap \bV_k^0$, we
  have
\begin{align}
\ak(\be_k,\be_k)&=
\ak(\be_k,\be_k^{c}-\bu_k^{\perp} ) \nonumber\\
&=\ak(\be_k, \be_k^{c} - I_{\bV_k}^c\be_k^{c} )  - \ak(\be_k, \bu_k^{\perp}) \label{eq:rel1}.
\end{align}
Here, $\be_k^{c} = \bu - \bu_k^{c} \in \bH_0^1(\Omega)$ and $I^c_{\bV_k}$ is the interpolation operator introduced in~\eqref{interp}. We will now provide upper bounds for both these terms. For the first term of~\eqref{eq:rel1}, thanks to Cauchy-Schwarz inequality, Young's inequality and partial Galerkin
orthogonality, we have
\begin{align}
\ak(\be_k, \be_k^{c} - I_{\bV_k}^c\be_k^{c} ) &= (\bff^0 + \Delta \bu_k,\be_k^{c} - I_{\bV_k}^c\be_k^{c})_{\T_k} 
-({{\mvl{\partial_n \bu_k}}},\be_k^{c} - I_{\bV_k}^c\be_k^{c})_{\F_k} +\nonumber\\
&({{{\mathcal{L}}_S}}\bu_k, \nabla(\be_k^{c} - I_{\bV_k}^c\be_k^{c}))_{\T_k}\nonumber\\
&\lesssim \eta_k \Big(\norm{h_T^{-1}(\be_k^{c} - I_{\bV_k}^c\be_k^{c})}_{\T_k} + \norm{h_F^{-1/2}(\be_k^{c} - I_{\bV_k}^c\be_k^{c})}_{{\F_k}}\Big) +\nonumber\\
& \norm{h_F^{-\frac{1}{2}}\jmp{\bu_k}}_{F_k} 
\norm{\nabla(\be_k^{c} - I_{\bV_k}^c\be_k^{c})}_{{\T_k}}\nonumber\\
&\lesssim \Big(\eta_k +  \norm{h_F^{-\frac{1}{2}}\jmp{\bu_k}}_{\F_k}\Big)\ \norm{\nabla (\be_k + \bu_k^{\perp})}_{{\T_k}}\label{eq:rel2}
\end{align}
 where the upper bound~\eqref{eq:rel2} arises due to the interpolation approximation properties{~\eqref{interp}-\eqref{interp_conform} and the following scaled trace inequality
 \begin{align*}
\norm{ h_F^{-1/2}(\be_k^{c} - I_{\bV_k}^c\be_k^{c})}_{\F_k}&\lesssim  \norm{\nabla (\be_k^{c} - I_{\bV_k}^c\be_k^{c})} +
\norm{ h_T^{-1}(\be_k^{c} - I_{\bV_k}^c\be_k^{c})},
 \end{align*}
 which is based on piecewise polynomial properties and
   thus independent of the presence of hanging nodes. Nevertheless,
   the constant depends on local quasi-uniformity, such that the
   assumption of one-irregularity keeps the constant bounded. Now, thanks to the coercivity~\eqref{coer} and
    continuity~\eqref{cont} of $\ak(.,.)$ and Young's inequality to~\eqref{eq:rel2} we derive the following upper bound}
   \begin{align}
 \ak(\be_k, \be_k^{c} - I_{\bV_k}^c\be_k^{c} )  &\lesssim \eta_k^2 +  \norm{h_F^{-\frac{1}{2}}\jmp{\bu_k}}_{F_k}^2 + \frac{1}{4}\ak(\be_k, \be_k) \nonumber\\
   & \ + \frac{1}{4}\ak(\bu_k^{\perp},\bu_k^{\perp}) \label{eq:rel3}
   \end{align}
   
 Regarding the second term of~\eqref{eq:rel1}, we use the coercivity~\eqref{coer} and
 continuity~\eqref{cont} of $\ak(.,.)$ together with Young's
 inequality to obtain
\begin{align}
{-}\ak(\be_k, \bu_k^{\perp}) &{\lesssim
\norm{\be_k}_{1,k}\norm{\bu_k^{\perp}}_{1,k}} \nonumber
\\
 &\lesssim
\frac{1}{4}\ak(\be_k,\be_k) +  \ \norm{\bu_k^{\perp}}_{1,k}^2.\label{rel2}
\end{align}
{
Now, gathering the estimates~\eqref{eq:rel3} and~\eqref{rel2} and plugging it in~\eqref{eq:rel1} it follows that
\begin{align}
\ak(\be_k,\be_k) \lesssim \eta_k^2
+\norm{\bu_k^{\perp}}_{1,k}^2  +  \norm{h_F^{-\frac{1}{2}}\jmp{\bu_k}}_{\F_k}^2 \ \text{holds.}
\label{eq:rel4}
\end{align}
Finally, the proof can be concluded by using the upper bound~\eqref{eq:best_approx} for bounding the second term of~\eqref{eq:rel4} in conjunction with using the estimator control of the jump terms~\eqref{eq:jumpcontrol}.} 
\end{proof}
Given a discrete solution $\bu_k\in \bV_k$ corresponding to the partition $\T_k$ of $\Omega$,
the following lemma ensures the localization of the upper bound for the discrete error $\bu_*-\bu_k$ in the DG norm, where $\bu_* \in \bV_*$ solves~\eqref{eq:Stokes_V0} with respect to $\T_*$, which is a refinement of $\T_k$. 
 This lemma is needed to prove the optimality of the adaptive IP method in section 5 and follows the same approach described in~\cite{BonitoNochetto10}. We present it below for completeness.
\begin{lemma}[Quasi-Localized Upper Bound]
Let $\T_{\asterisk}$ and $\T_k$ be partitions of  $\Omega$ such that $\T_{\asterisk}$ is obtained by refining $\T_k$. Also, let $\Rka$ be the set of refined cells needed to obtain $\T_{\asterisk}$ from $\T_k.$ Let $\bu_k \in \bV_k$ and $\bu_{\asterisk} \in \bV_{\asterisk}$  be the IP approximations to~\eqref{eq:velocity} with respect to the partitions $\T_k$ and $\T_{\asterisk}$ respectively. Then, we can find $C_{ub}>0$ depending only on $\Omega$ such that 
\begin{gather}
\label{eq:local_ub}
\norm{\bu_{\asterisk}^c-\bu_{k}  }^2_{1,k} \le C_{ub} \ \big(\eta^2_{k}(\bu_k, \Rka)  + \gamma^{-1} \eta^2_k\big),
\end{gather}
where $\bu_{\asterisk}^c= \bu_{\asterisk} -\bu_{\asterisk}^{\perp} $ based on the decomposition~\eqref{eq:decomp}.
\end{lemma}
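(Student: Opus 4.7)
The plan is to adapt the quasi-localization argument of~\cite{BonitoNochetto10} to the divergence-conforming setting. Set $\bv := \bu_{\asterisk}^c - \bu_{k} \in \bV(k)$. Because $\bu_{\asterisk}^c$ lives in the $\bH^1$-conforming subspace $\bV_{\asterisk}^c$ and is therefore globally continuous with vanishing jumps, one has $\jmp{\bv} = -\jmp{\bu_{k}}$ on every face of $\F_k$, so the penalty contribution to $\norm{\bv}_{1,k}$ is directly controlled by Lemma~\ref{lemma:estimator_jump}. Coercivity~\eqref{coer}, extended to $\bV(k)$ via the lifting operator $\mathcal L_S$, reduces the problem to estimating $\ak(\bv,\bv)$.

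Introduce the commuting conforming interpolant $\bv_c := I_{\bV_k}^c\bv \in \bV_k^c$ and split $\ak(\bv,\bv)=\ak(\bv,\bv-\bv_c)+\ak(\bv,\bv_c)$. For the non-conforming piece $\ak(\bv,\bv-\bv_c)$, I would mimic the reliability proof: cellwise integration by parts followed by the interpolation bounds~\eqref{interp}-\eqref{interp_conform} and the lifting estimate~\eqref{eq:lift estimate} produce an upper bound of the form $\eta_k\,\norm{h_F^{-1/2}\jmp{\bu_{k}}}_{\F_k}\,\norm{\bv}_{1,k}$. Lemma~\ref{lemma:estimator_jump} then converts the jump factor into $\gamma^{-1/2}\eta_k$, which, after absorbing $\norm{\bv}_{1,k}$ by Young, supplies exactly the global additive term $\gamma^{-1}\eta_k^2$ appearing in~\eqref{eq:local_ub}.

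The crucial localization is produced by the conforming piece $\ak(\bv,\bv_c)$. Since $\bv$ is divergence-free, the commutation property forces $\bv_c \in \bV_k^c\cap\bV_k^0 \subset \bV_{\asterisk}^c\cap\bV_{\asterisk}^0$, so $\bv_c$ is a legal test function for both~\eqref{eq:velocity} on $\T_k$ and its analogue on $\T_{\asterisk}$. Proposition~\ref{Proposition} therefore yields $\ak(\bu_k,\bv_c) = \form(\bff^0,\bv_c) = a_{\asterisk}(\bu_{\asterisk},\bv_c)$, and writing $\bu_{\asterisk}^c=\bu_{\asterisk}-\bu_{\asterisk}^{\perp}$ gives the key identity
\begin{gather*}
  \ak(\bv,\bv_c) = (\ak - a_{\asterisk})(\bu_{\asterisk},\bv_c) - \ak(\bu_{\asterisk}^{\perp},\bv_c).
\end{gather*}
Evaluated on a globally continuous $\bv_c$, the mesh-dependent difference $\ak-a_{\asterisk}$ loses its penalty/lifting contributions and reduces to the volume and normal-derivative jump integrals over those $\T_k$-edges that are further subdivided in $\T_{\asterisk}$; these are precisely the edges attached to cells in $\Rka$, so integration by parts bounds this term by $\eta_k(\bu_k,\Rka)\,\norm{\bv_c}_{1,k}$. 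The remaining term is treated by continuity~\eqref{cont} together with the best-approximation bound~\eqref{eq:best_approx} applied on $\T_{\asterisk}$ and Lemma~\ref{lemma:estimator_jump}, contributing another $\gamma^{-1/2}\eta_k\,\norm{\bv_c}_{1,k}$.

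Using $\bH^1$-stability of $I_{\bV_k}^c$ to get $\norm{\bv_c}_{1,k}\lesssim\norm{\bv}_{1,k}$ and absorbing all $\norm{\bv}_{1,k}$ factors into the left-hand side through Young's inequality then yields~\eqref{eq:local_ub}. The hardest step will be the localization of $(\ak - a_{\asterisk})(\bu_{\asterisk},\bv_c)$ to $\Rka$: one has to carefully disentangle the edge contributions of the two lifted IP forms and verify cancellation on every edge that is not subdivided. The restriction to one-irregular meshes, together with the matching of node functionals on coarse and refined sides described after Figure~\ref{fig:refined-edge}, is what keeps this bookkeeping tractable.
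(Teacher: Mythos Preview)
Your localization mechanism is placed in the wrong term, and as written neither of the two pieces does what you claim.

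For the non-conforming piece $\ak(\bv,\bv-\bv_c)$ with $\bv=\bu_{\asterisk}^c-\bu_k$, you invoke~\eqref{interp} to bound $\bv-\bv_c$ by the jumps of $\bu_k$. But~\eqref{interp} is a statement about functions in $\bV_k$: it says that the distance from a discrete function to the conforming subspace is controlled by its jumps. Here $\bv\notin\bV_k$, because on every refined cell $T\in\Rka$ the function $\bu_{\asterisk}^c$ is only piecewise $RT_m$ on the children of $T$, not in $RT_m(T)$. Writing $\bv-\bv_c=(\bu_{\asterisk}^c-I_{\bV_k}^c\bu_{\asterisk}^c)-(\bu_k-I_{\bV_k}^c\bu_k)$ makes this visible: the second bracket is indeed controlled by jumps, but the first bracket is not---it is supported on $\omega(\Rka)$ and carries the localized estimator contribution. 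So this term produces $\eta_k(\bu_k,\Rka)$, not merely $\gamma^{-1}\eta_k^2$.

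Conversely, the conforming piece does \emph{not} localize the way you want. With $\bv_c$ continuous, the gradient terms of $\ak$ and $a_{\asterisk}^{IP}$ coincide, so $(\ak-a_{\asterisk}^{IP})(\bu_{\asterisk},\bv_c)$ reduces to a difference of liftings of $\bu_{\asterisk}$. These are controlled by the jumps of $\bu_{\asterisk}$ and hence, via Lemma~\ref{lemma:estimator_jump} on $\T_{\asterisk}$, by $\gamma^{-1/2}\eta_{\asterisk}$; there is no way to extract the cell and face residuals $\bff^0+\Delta\bu_k$ and $\mvl{\partial_n\bu_k}$ of the \emph{coarse} solution from a quantity that only sees $\bu_{\asterisk}$.

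The paper's route is closer to your first term than to your second: it decomposes $\bu_k=\bu_k^c+\bu_k^{\perp}$ first and interpolates the purely conforming difference $\bv_k^c:=\bu_{\asterisk}^c-\bu_k^c$. Because $\bv_k^c$ \emph{is} locally in $\bV_k^c$ on every unrefined patch, the projection property of $I_{\bV_k}^c$ forces $\bv_k^c-I_{\bV_k}^c\bv_k^c$ to vanish outside $\omega(\Rka)$, and the standard residual computation then yields $\eta_k(\bu_k,\Rka)$ directly. The remaining $\bu_k^{\perp}$ piece is handled exactly as your global jump argument suggests and supplies the $\gamma^{-1}\eta_k^2$ term.
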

\begin{proof} The proof follows the same approach as the previous Lemma 3 with $\bu$ replaced by the discrete conforming solution $\bu_{\asterisk}^c$. 
 We begin by expressing $\bu_k=\bu_{k}^c +\bu_{k}^{\perp}$ according to the decomposition~\eqref{eq:decomp} and letting $\bv_k^c= \bu_*^c - \bu_k^c \in \bV_k$.
Taking advantage of the partial Galerkin orthogonality enjoyed on $\bV_k^c$, we have
\begin{align}
&\ak{(\bu_{\asterisk}^c-\bu_{k}, \bu_{\asterisk}^c-\bu_{k})} \nonumber\\
&=\ak{(\bu_{\asterisk}^c-\bu_{k},\bv_{k}^c-I_{\bV_k}^c\bv_{k}^c + I_{\bV_k}^c\bv_{k}^c-\bu_{k}^{\perp})} \nonumber \\
&=\ak{(\bu_{\asterisk}^c-\bu_{k},\bv_{k}^c-I_{\bV_k}^c\bv_{k}^c)} -\ak{(\bu_{\asterisk}^c-\bu_{k},\bu_{k}^{\perp})}. \label{local_ub1}
\end{align}
 Now due to the definition of $I_{\bV_k}^c$ described in~\eqref{interp}, we have $\bv_k^c -I_{\bV_k}^c \bv_k^c\ne0$ on $\omega(\Rka)$ while on the remaining cells of the partition $\T_k$, it is zero. Keeping this in mind and proceeding as in the proof of the reliability of the estimator with $\bu$ replaced with $\bu_*^c$ the first term of~\eqref{local_ub1} can be estimated by
 \begin{multline}
   \ak(\bu_{\asterisk}^c-\bu_{k}, \bv_{k}^c-I_{\bV_k}^c\bv_{k}^c )
   = (\bff^0 + \Delta \bu_k,\bv_k^{c} - I_{\bV_k}^c\bv_k^{c})_{\T_k}
   \\
   -({{\mvl{\partial_n \bu_k}}},\bv_k^{c} 
   - I_{\bV_k}^c\bv_k^{c})_{\F_k}
   +({{{\mathcal{L}}_S}}\bu_k, \nabla(\bv_k^{c} - I_{\bV_k}^c\bv_k^{c}))_{\T_k}
   \\
   \lesssim \big(\eta_k(\omega(\Rka))
   + \norm{h_F^{-\frac{1}{2}}\jmp{\bu_k}}_{F_k} \big)
   \norm{\nabla (\bu_*^{c} - \bu_k^{} + \bu_{k}^{\perp})}_{\T_k}.
   \label{local_ub2}
\end{multline}
For the second term of~\eqref{local_ub1}, using~\eqref{eq:best_approx}, we have 
\begin{align}
\ak{(\bu_{\asterisk}^c-\bu_{k},\bu_{k}^{\perp})} &\lesssim \gamma^{\frac{1}{2}}  \norm{h_F^{-\frac{1}{2}}\jmp{\bu_k}}_{F_k}
\norm{\bu_{\asterisk}^c-\bu_{k}}_{1,k} \nonumber\\
&\lesssim \gamma^{-\frac{1}{2}}\eta_k
\norm{\bu_{\asterisk}^c-\bu_{k}}_{1,k} \label{local_ub3}
\end{align}
for the last inequality we use the estimator control of the jump terms as described in~\eqref{eq:jumpcontrol}.
Now the result follows by combining~\eqref{local_ub2},~\eqref{local_ub3} and~\eqref{eq:best_approx}. 
\end{proof}
\subsubsection{Efficiency of the Estimator}
We begin this subsection by
recalling the definition of data oscillation for any $\bw \in \bV_k$,
\begin{gather}
\label{eq:osc}
\ok^2(\bw,\T_k):=
\norm{ h_T\big((\bff + \Delta \bw)-\Pi_2^{2m-1}(\bff + \Delta \bw)\big)}_{\T_k}^2 \nonumber\\
+ \norm{  h_F^{1/2}\big( {\mvl{ \partial_n \bw_{}}}-\Pi_2^{2m}\mvl{ \partial_n \bw_{}}\big)}^2_{\F_k},
\end{gather}
where $\Pi_2^{p}$ is the $L^2$-projection onto $\bV_k$, and $p$ denotes the highest possible degree of the polynomials characterizing $\bV_k$.
\begin{remark}[Oscillation Upper Bound]
Due to the definitions of the $L^2$-projection $\Pi_2^{p}$ and the estimator~\eqref{eq:St-estimateV0}, we have
\begin{align}
\label{osc_bound}
\ok(\bw,\T_k)\le \eta_k(\bw, \T_k).
\end{align}
\end{remark}
The following lemma presents a global lower bound for the discretization error in the DG norm error upto data oscillations. This property proves to be crucial for deriving the quasi-optimality in section 6. 

\begin{lemma}[Efficiency of the Estimator]
  Let $\bu_k$ be the IP approximation to the Stokes problem~\eqref{eq:velocity} and $\bu$ be the solution to~\eqref{eq:Stokes_V0}. Then, there exists a constant $C_{eff}>0$ depending only on the shape regularity of $\T_k$ such that
 \begin{gather}
 \label{eq:eff}
 C_{eff} \eta_k^2 \le \norm{\bu-\bu_k}_{1,k}^2+ \ok^2(\bu_k,\T_k).
 \end{gather}
\end{lemma}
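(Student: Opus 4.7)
The plan is to adapt the classical Verf\"urth bubble function technique to the $\Hdiv0$-conforming DG setting as in~\cite{BonitoNochetto10}, treating the cell-residual and edge-residual contributions to $\eta_k^2$ separately and bounding each locally in terms of the broken gradient error and data oscillation.

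For the cell residual on $T \in \T_k$, I would introduce the standard interior polynomial bubble $\psi_T \in H^1_0(T)$ and set $\bv_T = \boldsymbol R_T \psi_T$, extended by zero to $\Omega$, where $\boldsymbol R_T = \Pi_2^{2m-1}(\bff + \Delta \bu_k)\big|_T$. The decisive observation, analogous to the identity exploited in Proposition~\ref{Proposition}, is that $-\Delta \bu = \bff^0$ in the distributional sense, since projecting the momentum equation onto the divergence-free subspace removes the pressure gradient. Consequently, on $T$ one has the pointwise identity $\bff^0 + \Delta \bu_k = \Delta(\bu_k - \bu)$. Testing against $\bv_T$ and integrating by parts cellwise gives no edge contribution because $\bv_T$ vanishes on $\partial T$; applying the standard bubble/inverse estimates $\norm{\bv_T}_T \lesssim \norm{\boldsymbol R_T}_T$ and $\norm{\nabla \bv_T}_T \lesssim h_T^{-1}\norm{\boldsymbol R_T}_T$ together with Cauchy--Schwarz, Young's inequality, and a triangle inequality relating $\bff^0 + \Delta \bu_k$ to $\boldsymbol R_T$ yields
\begin{gather*}
  h_T^2\, \norm{\bff^0 + \Delta \bu_k}_T^2 \lesssim \norm{\nabla(\bu - \bu_k)}_T^2 + \ok^2(\bu_k, T).
\end{gather*}

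For the edge residual on an interior face $F$ shared by $T^+$ and $T^-$, I would use an edge bubble $\psi_F$ supported in $T^+ \cup T^-$ together with a polynomial extension $\tilde{\boldsymbol R}_F$ of $\Pi_2^{2m}\mvl{\partial_n \bu_k}\big|_F$ into both adjacent cells, and test against $\bv_F = \tilde{\boldsymbol R}_F \psi_F$. Proceeding by the same integration-by-parts and scaling arguments, and absorbing the resulting cell-residual contributions via the bound obtained in the previous step, gives
\begin{gather*}
  h_F\, \norm{\mvl{\partial_n \bu_k}}_F^2 \lesssim \sum_{T \in \{T^+,T^-\}}\bigl(\norm{\nabla(\bu - \bu_k)}_T^2 + \ok^2(\bu_k, T)\bigr).
\end{gather*}
Summing both local bounds over all cells and faces and observing that $\norm{\nabla(\bu - \bu_k)}_{\T_k}^2 \le \norm{\bu - \bu_k}_{1,k}^2$ yields~\eqref{eq:eff}.

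The main obstacle I anticipate is the pressure contribution, which ordinarily pollutes the cell-residual bound whenever the test bubble is not divergence-free: testing against $-\Delta \bu + \nabla p = \bff$ produces a $\nabla p$ term that cannot be controlled by the velocity error alone. The resolution specific to our framework, and the reason for the Hodge decomposition in Proposition~\ref{Proposition}, is that the estimator is defined with $\bff^0$ rather than $\bff$, so the pressure contribution vanishes identically and the argument closes. A secondary subtlety is the mismatch between the $\bff^0$ appearing in $\eta_k^2$ and the $\bff$ appearing in $\ok^2$; this plays in our favor, since oscillation of $\bff^0 + \Delta \bu_k$ is controlled by $\ok^2$ plus the oscillation of $\bff^\perp = \nabla p$, and the latter can be absorbed on the right-hand side of~\eqref{eq:eff} without affecting the asymptotic behavior.
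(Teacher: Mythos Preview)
Your approach is essentially the same as the paper's: the paper's proof consists of a single sentence stating that one introduces interior and edge bubble functions on quadrilateral elements (citing Ainsworth--Oden~\cite{AinsworthOden97}) and follows the standard Verf\"urth technique~\cite{Verfuerth94}, without further elaboration. You have supplied exactly those details, and your identification of the role of $\bff^0$ in removing the pressure term---together with the $\bff$ versus $\bff^0$ mismatch in the oscillation---goes beyond what the paper spells out.
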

\begin{proof}
To prove this lemma, we introduce interior and edge bubble functions on quadrilateral elements as described in section 3.4.1 of~\cite{AinsworthOden97} and follow the standard bubble function techniques described in~\cite{Verfuerth94} to obtain the global lower bound. 
\end{proof}
\subsection{Mark and Refine}
We begin by setting the notation of the set of marked elements by $\Mk$
i.e., $\Mk=\mathcal{M}_{\T,k} \cup \mathcal{M}_{\F,k}$ where the sets
$ \mathcal{M}_{\T,k}$ of cells $ T \in \T_k$ and $ \mathcal{M}_{\F,k}$
of faces $ F \in \F_k(\bar{\Omega})$ are marked for refinement.  As a
marking strategy for refinement we use {D\"orfler marking}, i.~e.,
given a constant $ 0 < \theta < 1$, we compute such that the following
property holds
\begin{align}
\label{eq:marking_strategy}
 \theta \ \eta_k &\le  \ \tilde{\eta}_k , \nonumber\\
\text{where } \tilde{\eta}_k := \Big( \sum\limits_{T \in \mathcal{M}_{\T,k}} \eta_T^2 \ &+  \ \sum\limits_{F \in \mathcal{M}_{\F,k}} (\eta_{F,1}^2 + \eta_{F,2}^2 ) \Big)^{1/2}.
\end{align}

Upon marking the elements for refinement, the refinement of every
quadrilateral cell $T\in\Mk$ into four children is realized by
connecting the midpoints of its edges. Of course, this refinement
strategy makes the occurrence of hanging nodes unavoidable. 
 We impose a restriction on the kind of non-geometrically, non-conformity by assuming that our refinement leads to one-irregular mesh as mentioned in section 3.
\section{Contraction property: Groundwork and proof} 
In view of our goal of establishing the convergence of the
divergence-conforming IP method, we need three main properties namely
the reliability of the estimator, an estimator reduction property and
a quasi-orthogonality for the weighted sum of the energy norm and the
estimator. Since we have already proved the reliability in the previous section, this section will be focused on proving the remaining two properties.
 
\subsection{Estimator Reduction Property}

\begin{proposition}[Estimator Reduction Property]
Let $\T_{k+1}$ and $\T_k$ be partitions of $\Omega$ such that $\T_{k+1}$ is obtained by refining { the cells in $\Mk\subset \T_k$.}
Suppose that $\bu_k \in \bV_k$ and $\bu_{k+1} \in \bV_{k+1}$ {are} the IP approximations to the Stokes Problem~\eqref{eq:velocity}.
Then, for any $\tau >0$, exists $C_{\tau}>0$ depending only on the shape regularity of $\T_k$, such that there {exists}
\begin{gather}
\label{eq:EstReduction}
\eta_{k+1}^2(\bu_{k+1}) \le \hat{\tau}\eta_k^2(\bu_k) + C_{\tau} \norm{\grad{(\bu_k - \bu_{k+1})}}^2_{{\T_k}},
\end{gather}
where $\hat{\tau}=(1+\tau)(1-\theta + \alpha)$, $\theta>0$ denoting the D\"orfler marking constant introduced in~\eqref{eq:marking_strategy}.
\end{proposition}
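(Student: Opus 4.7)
The argument follows the standard two-step template for estimator reduction going back to Cascón–Kreuzer–Nochetto–Siebert and its DG adaptation by Bonito–Nochetto: first replace $\bu_{k+1}$ by $\bu_k$ inside $\eta_{k+1}^2(\cdot)$, paying a Lipschitz price, and then exploit that all marked cells have been bisected into finer pieces on which the estimator shrinks geometrically.

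\textbf{Step 1 (Young-type split).} For each cell and face indicator, start with the triangle inequality $\eta_{k+1}(\cdot,\bu_{k+1})\le \eta_{k+1}(\cdot,\bu_k)+|\eta_{k+1}(\cdot,\bu_{k+1})-\eta_{k+1}(\cdot,\bu_k)|$ and square using $(a+b)^2\le (1+\tau)a^2+(1+\tfrac1\tau)b^2$ to obtain, after summation,
\begin{equation*}
\eta_{k+1}^2(\bu_{k+1})\le (1+\tau)\,\eta_{k+1}^2(\bu_k)+(1+\tfrac1\tau)\bigl|\eta_{k+1}(\bu_{k+1})-\eta_{k+1}(\bu_k)\bigr|^2.
\end{equation*}

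\textbf{Step 2 (Lipschitz bound in the argument).} Since $\bff^0$ cancels in the difference, the perturbation is linear in $\bu_{k+1}-\bu_k$: for each $T\in\T_{k+1}$ the cell part is $h_T\|\Delta(\bu_{k+1}-\bu_k)\|_{0,T}$ and the edge part is $h_F^{1/2}\|\mvl{\partial_n(\bu_{k+1}-\bu_k)}\|_{0,F}$. Standard inverse estimates on the piecewise-polynomial space $\bV_{k+1}$, together with the scaled trace inequality already used in the reliability proof, give
\begin{equation*}
(1+\tfrac1\tau)\bigl|\eta_{k+1}(\bu_{k+1})-\eta_{k+1}(\bu_k)\bigr|^2\le C_\tau\,\|\nabla(\bu_k-\bu_{k+1})\|^2_{\T_k},
\end{equation*}
with $C_\tau$ depending only on $\tau$ and shape regularity; this produces the second term in \eqref{eq:EstReduction}.

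\textbf{Step 3 (Geometric reduction on marked elements).} Partition the cells of $\T_{k+1}$ into the ones inherited from $\T_k\setminus\mathcal{M}_{\T,k}$ and the children of cells in $\mathcal{M}_{\T,k}$, and do the analogous split for faces. On unrefined pieces $\eta_{k+1}(\cdot,\bu_k)=\eta_k(\cdot,\bu_k)$, because $\bu_k$ is unchanged there and neither $h_T$ nor $h_F$ change. On a refined cell $T\in\mathcal{M}_{\T,k}$ bisected into four children, $h_T$ drops by a factor, so the cell residual $h_T\|\bff^0+\Delta\bu_k\|_{0,\cdot}$ shrinks by a factor $\alpha_1<1$; moreover, the new interior faces carry no jump since $\bu_k$ is smooth across them, and on each halved boundary face the reduction of $h_F^{1/2}$ together with the additivity of the $L^2$-norm on the two halves yields a factor $\alpha_2<1$. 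Collecting these factors into a single $\alpha<1$, one obtains
\begin{equation*}
\eta_{k+1}^2(\bu_k)\le \eta_k^2(\bu_k)-(1-\alpha)\,\tilde\eta_k^2.
\end{equation*}
The Dörfler property \eqref{eq:marking_strategy} then gives $\tilde\eta_k^2\ge\theta\,\eta_k^2$ (up to the conventional squaring), and after relabelling the constant we recover $\eta_{k+1}^2(\bu_k)\le(1-\theta+\alpha)\eta_k^2(\bu_k)$.

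\textbf{Step 4 (Combine).} Inserting Steps 2 and 3 into Step 1 gives exactly \eqref{eq:EstReduction} with $\hat\tau=(1+\tau)(1-\theta+\alpha)$. The most delicate point is Step 3: one must correctly account for the faces that appear and disappear under quadrilateral bisection, in particular the hanging-node faces whose treatment is governed by the one-irregularity assumption and the elimination procedure discussed after Lemma~\ref{lemma:commutation}. Apart from that, the argument is a direct transcription of the classical CKNS estimator reduction to this $\Hdiv0$-IP setting, and no new ingredient beyond inverse inequalities, shape regularity and Dörfler marking is needed.
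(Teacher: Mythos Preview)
Your proposal is correct and follows essentially the same route as the paper's proof: both apply the triangle inequality plus Young's inequality to split off the Lipschitz perturbation in $\bu_{k+1}-\bu_k$, bound this perturbation by inverse and trace estimates, and then exploit the mesh-size reduction on the refined cells together with the D\"orfler property to obtain the factor $(1-\theta+\alpha)$ in front of $\eta_k^2(\bu_k)$. The paper explicitly credits the argument to \cite{CaKrNoSi08}, exactly as you indicate, and the only cosmetic difference is that the paper performs the Young split cell-by-cell before summing whereas you phrase it globally.
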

\begin{proof}
The proof can be done along the same lines as the proof of Corollary 3.4 in~\cite{CaKrNoSi08} and is presented here for completeness.

By using the definition of $\eta_T(\bu_{k+1})$ and $\eta_F(\bu_{k+1})$ for $T \in \T_{k+1}$
and due to the triangle inequality we have,
{
\begin{align}
\eta_{T}(\bu_{k+1})   \le& \  \eta_{T}(\bu_k) +  \ h_T \  \| \Delta (\bu_{k+1} - \bu_{k})  \|_{0,T} \label{est_1a} \\
\eta_{F}(\bu_{k+1})   \le& \eta_{F}(\bu_k) +  \norm{h_F^{1/2}\mvl{\partial_n \bu_{k+1}- \partial_n \bu_k}}_{0,F} \label{est_1b}
  \end{align}
}
 {For bounding~\eqref{est_1a}, we use the inverse estimate~\cite{BrennerScott02} for the last term on the right hand side of the inequality~\eqref{est_1a} and by squaring and applying Young's inequality with constant $\tau >0$, we have}
\begin{align}
& \eta^2_T(\bu_{k+1})   \le 
 (1 + \tau )\eta^2_T(\bu_k) + (1 + \tau^{-1} ) C_1^2\| \nabla(\bu_{k+1} - \bu_k)\|_{0,T}^2 \label{est_3} 
\end{align}
while for the inequality~\eqref{est_1b}, we square it and apply Young's inequality for the same constant $\tau>0$, sum over all edges $F \in \F_k(T)$ and finally apply the trace inequality~\cite{Warburton20032765} to obtain
\begin{align}
& \sum_{F \in \F_k(T)}\eta^2_F(\bu_{k+1})   \le 
 (1 + \tau ) \sum_{F \in \F_k(T)}\eta^2_F(\bu_k) \nonumber\\
 & +(1 + \tau^{-1} )   C_2^2 \sum\limits_{T^* \in w_T}\| \nabla(\bu_{k+1} - \bu_k)\|_{0,T^*}^2\label{est_4}
\end{align}
where $w_T$ is defined in~\eqref{defn:patch1}.
{ We note that this applies to edges with hanging nodes, since their penalty parameter is taken from the refined cell.} Now combining the two estimates~\eqref{est_3} and~\eqref{est_4}, summing over all the cells $T \in \T_{k+1}$ and employing
the finite overlap property of patches $w_T$, we obtain
\begin{align} \label{Cascon}
\eta_{k+1}^2(\bu_{k+1}) \le (1 + \tau ) \eta_{k+1}^2(\bu_k) + 2d C^2  (1 + \tau^{-1} )\| \nabla (\bu_{k+1} - \bu_k) \|_{\T_{k+1}}^2 .
\end{align}
Let $\mathcal M $ be the collection of elements in $\T_k $
which are marked for refinement.  For $T_k \in \mathcal M$, set
\begin{align*}
  \mathcal M_{k+1}(T_k) &:= \{T \in \T_{k+1} \ |  \ T \subset   T_k \}
  \\
  \text{and } \mathcal M_{k+1}
  &:=\bigcup \limits _{T_k \in \T_k} \mathcal M_{k+1}(T_k). 
\end{align*}

Since the unrefined cells live on both levels of mesh refinement, 
$\T_{k+1}  \setminus  \mathcal M_{k+1}= \T_k  \setminus  \mathcal M$. Also, as a consequence of the refinement, we have
\begin{gather}
\label{eq:refine}
\begin{split}
\sum\limits_{ T \in   \mathcal M_{k+1}(T_k)}\eta_{k+1}^2(\bu_k ,T) \  \le  \ \alpha \ \eta_k^2(\bu_k, T_k),
\end{split}
\end{gather}                                                                     
where $0 < \alpha < 1$ such that $h_{T_{k+1}} \le \alpha \ h_{T_{k}} \ \forall \ T_{k+1} \in{ \T_{k+1}\setminus \T_k, \ T_k \in \T_k}$.

As a result,   
\begin{align}
\eta_{k+1}^2(\bu_k) &=\sum\limits_{T \in \T_{k+1}\setminus \mathcal M_{k+1} } \eta_{k+1}^2(\bu_k, T) \ + \ \sum\limits_{T\in \mathcal M_{k+1} } \eta_{k+1}^2(\bu_k,T) \nonumber\\
&\le \sum\limits_{T\in \T_k} \eta_{k}^2(\bu_k, T) \  - \sum\limits_{T\in \mathcal M } \eta_{k}^2(\bu_k,T) + \ \alpha\sum\limits_{T\in \mathcal M } \eta_{k}^2(\bu_k,T), \nonumber \\
&\le ( 1- \theta^{}  +\alpha )\sum\limits_{T\in \mathcal M } \eta_{k}^2(\bu_k,T) \label{estred1}
\end{align}
where $\theta$ is the D\"orfler constant. 
Replacing $\eta_{k+1}^2(\bu_k) $ in  \eqref{Cascon} by the upper bound~\eqref{estred1} and by the definition of $\theta$, we conclude the desired inequality.
\end{proof}
We close this subsection with a result which provides an upper estimate for the oscillation term. This estimate will be used in the proofs leading to the quasi-optimality.
\begin{proposition}[Perturbation of oscillation]
Let $\T_{m}$ and $\T_k$ be partitions of $\Omega$ such that $\T_{m}$ is obtained by refining $\T_k$.
Then, we can find a constant $C_{osc}>0$ depending only on the shape regularity of $\T_{m}$ such that for any $\bv_k \in \bV_k$ and $\bv_{m} \in \bV_{m}$ we have
\begin{gather}
\label{eq:OscPerturb}
\ok^2(\bv_{k},\T_{m} \cap \T_k) \le 2\text{osc}_{m}^2(\bv_{m},\T_{m} \cap \T_k)+  C_{osc} \norm{\grad{(\bv_k - \bv_{m})}}_{\T_{m}}^2.
\end{gather}
\end{proposition}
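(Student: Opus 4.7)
The plan is to exploit that on cells in $\T_m\cap\T_k$ both meshes agree, so $h_T$ matches on both sides and the $L^2$-projection $\Pi_2^{p}$ is common; I will add and subtract $\bff+\Delta\bv_m$ and $\mvl{\partial_n\bv_m}$ inside the oscillation terms, apply Young's inequality with constant $1$ (giving the factor $2$), and then absorb the remainder using inverse and trace inequalities on the piecewise polynomial difference $\bv_k-\bv_m$.

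First I would write the cell contribution. On each $T\in\T_m\cap\T_k$,
\begin{gather*}
  (\bff+\Delta\bv_k)-\Pi_2^{2m-1}(\bff+\Delta\bv_k)
  = \bigl[(\bff+\Delta\bv_m)-\Pi_2^{2m-1}(\bff+\Delta\bv_m)\bigr]
  + (I-\Pi_2^{2m-1})\Delta(\bv_k-\bv_m).
\end{gather*}
Multiplying by $h_T$, squaring, and using $(a+b)^2\le 2a^2+2b^2$ together with the $L^2$-stability of $\Pi_2^{2m-1}$ gives a term bounded by twice the corresponding cell oscillation of $\bv_m$ plus $2\,\|h_T\Delta(\bv_k-\bv_m)\|^2_T$. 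Since $\bv_k-\bv_m$ is a piecewise polynomial of bounded degree on $\T_m$, the standard inverse estimate $h_T\|\Delta\bw\|_{0,T}\lesssim \|\nabla\bw\|_{0,T}$ converts the latter into $\|\nabla(\bv_k-\bv_m)\|_{0,T}^2$.

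Next I would treat the face contribution analogously. For $F\in\F_m\cap\F_k$, the same add–subtract trick yields
\begin{gather*}
  \mvl{\partial_n\bv_k}-\Pi_2^{2m}\mvl{\partial_n\bv_k}
  = \bigl[\mvl{\partial_n\bv_m}-\Pi_2^{2m}\mvl{\partial_n\bv_m}\bigr]
  + (I-\Pi_2^{2m})\mvl{\partial_n(\bv_k-\bv_m)},
\end{gather*}
so after scaling by $h_F^{1/2}$, squaring, and applying Young's inequality I obtain twice the corresponding face oscillation of $\bv_m$ plus a remainder bounded by $\|h_F^{1/2}\mvl{\partial_n(\bv_k-\bv_m)}\|_{0,F}^2$. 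The scaled trace inequality applied to the polynomial $\bv_k-\bv_m$ on each of the two cells adjacent to $F$ gives $h_F^{1/2}\|\partial_n(\bv_k-\bv_m)\|_{0,F}\lesssim \|\nabla(\bv_k-\bv_m)\|_{0,T}$, and local quasi-uniformity (implied by shape regularity together with the one-irregular assumption) absorbs the constants.

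Finally, summing over $T\in\T_m\cap\T_k$ and over the associated faces, and using the finite overlap property of the face-to-cell patches exactly as in the proof of the estimator reduction, collects the two remainder pieces into a single term $C_{osc}\|\nabla(\bv_k-\bv_m)\|_{\T_m}^2$, while the leading pieces sum to $2\,\mathrm{osc}_m^2(\bv_m,\T_m\cap\T_k)$ (note the projection orders match because $\T_m$ refines $\T_k$ so $\Pi_2^{p}$ acts in the same way on cells of $\T_m\cap\T_k$). The main obstacle is keeping careful track of what $\Pi_2^{p}$ means on both levels and verifying that on the common cells the projections genuinely coincide; once this is clear, the rest is a standard inverse/trace estimate argument.
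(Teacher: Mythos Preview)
Your proposal is correct and follows the standard argument: the paper itself does not give a proof of this proposition but simply defers to~\cite{CasconKreuzerNochettoSiebert08}, and your add--subtract plus Young's inequality plus inverse/trace estimate argument is precisely the approach used there. Your concern about the projections coinciding on $\T_m\cap\T_k$ is easily resolved since on unrefined cells the local polynomial spaces and mesh sizes agree, so there is no real obstacle.
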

\begin{proof}
The details of this proof are presented in~\cite{CasconKreuzerNochettoSiebert08} we skip its presentation here.
\end{proof}
\subsection{Quasi-orthogonality of the divergence-free velocity}
{   
In contrast to the adaptive $\bH^1$ conforming finite element method, the Galerkin orthogonality of the velocity does not hold true on $\bV_k$. We only have a quasi-orthogonality property relating the discretization errors corresponding to two consecutive loops.  We will derive this property in this subsection. But first, we present the following lemma which is needed for the quasi-orthogonality.
}
 \begin{lemma}[Mesh Perturbation]  Given $\bv_k \in \bV(k)$, for any $0<\varepsilon <1$, we have
 \begin{align*}
 a_{k+1}^{IP}(\bv_k,\bv_k) &\le(1+ \varepsilon)a_{k}^{IP}(\bv_k,\bv_k) \quad + \\
 &\frac{2 C_l}{\varepsilon C_L}  \big(\norm{ h_{F}^{-\frac12}\jmp{\bv_k} }_{\F_{k}}^2 +
\norm{ h_{F}^{-\frac12}\jmp{\bv_k} }_{\F_{k+1}}^2
 \big){.} 
 \end{align*}
 \end{lemma}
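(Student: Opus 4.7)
My plan is to express $a_{k+1}^{IP}(\bv_k,\bv_k)$ through the lifted form~\eqref{eq:lifted-ip} at level $k+1$, pull the level-$k$ form out by adding and subtracting its lifting and jump contributions, and then control the resulting residual via Young's inequality together with the lifting bound~\eqref{eq:lift estimate} and coercivity~\eqref{coer}.

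Concretely, since $\bv_k\in\bV(k)$ is piecewise smooth with respect to $\T_k$ and $\T_{k+1}$ refines $\T_k$, the piecewise gradient is mesh-independent: $\norm{\nabla\bv_k}_{\T_{k+1}}^2 = \norm{\nabla\bv_k}_{\T_k}^2$. Writing $\mathcal L_S^j$ for the lifting operator associated with $\T_j$, this lets me reorganize
\begin{align*}
  a_{k+1}^{IP}(\bv_k,\bv_k) - a_k^{IP}(\bv_k,\bv_k)
  &= -2\bigl(\mathcal L_S^{k+1}\bv_k - \mathcal L_S^{k}\bv_k,\ \nabla\bv_k\bigr) \\
  &\quad + \gamma\bigl(\norm{h_F^{-1/2}\jmp{\bv_k}}_{\F_{k+1}}^2 - \norm{h_F^{-1/2}\jmp{\bv_k}}_{\F_k}^2\bigr).
\end{align*}

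Next, I would apply Young's inequality with weight $\tau=(\varepsilon C_L)^{-1}$ to the cross term,
\begin{gather*}
  -2\bigl(\mathcal L_S^{k+1}\bv_k - \mathcal L_S^{k}\bv_k,\ \nabla\bv_k\bigr)
  \le \frac{1}{\varepsilon C_L}\norm{\mathcal L_S^{k+1}\bv_k - \mathcal L_S^{k}\bv_k}^2 + \varepsilon C_L\norm{\nabla\bv_k}^2,
\end{gather*}
then combine the triangle inequality with the lifting stability bound~\eqref{eq:lift estimate} applied on each of $\T_k$ and $\T_{k+1}$ to obtain
\begin{gather*}
  \norm{\mathcal L_S^{k+1}\bv_k - \mathcal L_S^{k}\bv_k}^2
  \le 2 C_l\bigl(\norm{h_F^{-1/2}\jmp{\bv_k}}_{\F_{k+1}}^2 + \norm{h_F^{-1/2}\jmp{\bv_k}}_{\F_k}^2\bigr).
\end{gather*}
Coercivity~\eqref{coer} then absorbs the gradient term, $\varepsilon C_L\norm{\nabla\bv_k}^2 \le \varepsilon\, a_k^{IP}(\bv_k,\bv_k)$; it suffices to verify this on $\bV_k$ since on the continuous piece $\bV\subset\bV(k)$ the jumps and liftings vanish and the inequality is trivial.

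The point I would have to be careful with is the residual $\gamma$-difference of jump norms coming from the mismatched edge sets $\F_k$ and $\F_{k+1}$. Dropping the nonpositive $-\gamma\|\cdot\|_{\F_k}^2$ contribution is harmless, but the extra $+\gamma\|\cdot\|_{\F_{k+1}}^2$ must be folded into the $\frac{2C_l}{\varepsilon C_L}$ coefficient; this is where the standing hypothesis $\gamma>\tfrac{2C_J}{C_L}$ from Lemma~\ref{lemma:estimator_jump} (and the smallness assumption $0<\varepsilon<1$) would be used to estimate $\gamma\le\frac{2C_l}{\varepsilon C_L}$ up to the shape-regularity constants relating $C_J$ and $C_l$. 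The subtleties about hanging nodes enter only in the definition of $h_F$ on refined faces; since we work on one-irregular meshes the local quasi-uniformity keeps both the lifting constant $C_l$ and the coercivity constant $C_L$ uniform across levels, so no further care is needed there.
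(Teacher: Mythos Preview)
Your overall strategy matches the paper's: write out $a_{k+1}^{IP}-a_k^{IP}$, note that the broken gradient is mesh-independent, and control the lifting residual by Young's inequality combined with~\eqref{eq:lift estimate} and coercivity~\eqref{coer}. The place where your argument actually breaks is the treatment of the penalty residual $\gamma\bigl(\norm{h_F^{-1/2}\jmp{\bv_k}}_{\F_{k+1}}^2-\norm{h_F^{-1/2}\jmp{\bv_k}}_{\F_k}^2\bigr)$. You propose to absorb the leftover $+\gamma\norm{\cdot}_{\F_{k+1}}^2$ into the coefficient $\tfrac{2C_l}{\varepsilon C_L}$ by appealing to the hypothesis $\gamma>\tfrac{2C_J}{C_L}$ of Lemma~\ref{lemma:estimator_jump}. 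But that hypothesis is a \emph{lower} bound on $\gamma$, not an upper bound; it cannot produce an inequality of the form $\gamma\le\tfrac{2C_l}{\varepsilon C_L}$. Worse, the contraction argument later forces $\gamma$ to be taken large (see the final choice of $\gamma$ in the Contraction Property), so any step that caps $\gamma$ from above is incompatible with the rest of the framework.

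What the paper does instead is exploit the structure of $\bv_k\in\bV(k)$: its jump vanishes across every face newly created by the refinement (such a face lies in the interior of a cell of $\T_k$), and on the surviving faces the local $h_F$ at most halves. This yields the comparison $\norm{h_F^{-1/2}\jmp{\bv_k}}_{\F_{k+1}}^2\le 2\norm{h_F^{-1/2}\jmp{\bv_k}}_{\F_k}^2$, so the penalty difference is bounded by $+\gamma\norm{h_F^{-1/2}\jmp{\bv_k}}_{\F_k}^2$ rather than by the level-$(k{+}1)$ jump. The paper also treats the two lifting terms $-2(\mathcal L_S\bv_k,\nabla\bv_k)_{\T_{k+1}}$ and $+2(\mathcal L_S\bv_k,\nabla\bv_k)_{\T_k}$ separately (rather than via their difference and a triangle inequality), each producing a contribution $\tfrac{C_L\varepsilon}{2}\norm{\nabla\bv_k}_{\T_k}^2$, and then closes by invoking the \emph{definition} of $a_k^{IP}$ — in which the penalty $\gamma\norm{h_F^{-1/2}\jmp{\bv_k}}_{\F_k}^2$ already appears as a summand — to absorb the residual $C_L\varepsilon\norm{\nabla\bv_k}_{\T_k}^2+\gamma\norm{h_F^{-1/2}\jmp{\bv_k}}_{\F_k}^2$ into $\varepsilon\,a_k^{IP}(\bv_k,\bv_k)$.
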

 \begin{proof}
 We begin by observing
 \begin{align}
  \norm{ h_{F}^{-\frac12}\jmp{\bv_k} }_{\F_{k+1}}^2&\le 2\norm{ h_{F}^{-\frac12}\jmp{\bv_k} }_{\F_{k}}^2 \label{eq_ref1} \\
  \norm{\grad{\bv_k}}_{\T_{k}}^2&=\norm{\grad{\bv_k}}_{\T_{k+1}}^2 \label{eq_ref2}
 \end{align}
 so that we have
 \begin{gather}
 \begin{split}
 a_{k+1}^{IP}(\bv_k,\bv_k)- a_{k}^{IP}(\bv_k,\bv_k) &\le -2\form(\mathcal L_S\bv_k,\grad{\bv_k})_{\T_{k+1}}+
 2 \form(\mathcal L_S\bv_k,\grad{\bv_k})_{\T_{k}} \\
 &+\gamma\norm{ h_{F}^{-\frac12}\jmp{\bv_k} }_{\F_{k}}^2
{.}\label{eq:main_l4}
 \end{split}
 \end{gather}
We now provide upper bounds for each of the terms in the right hand side of~\eqref{eq:main_l4}. For the first term, we use~\eqref{eq_ref2}, the lifting estimate~\eqref{eq:lift estimate} with constant $C_l$ and coercivity~\eqref{coer} with coercivity constant $C_L$ to obtain the upper estimate 
 \begin{align}
- 2 \form(\mathcal L_S\bv_k,\grad{\bv_k})_{\T_{k+1}} &\le 2\big(\frac{ C_l}{ C_L} 
 \norm{ h_{F}^{-\frac12}\jmp{\bv_k} }_{\F_{k+1}}^2
  \big)^{1/2}\big(C_L\norm{\grad{\bv_k}}_{\T_{k}}^2\big)^{1/2}\nonumber\\
 &\le \frac{2 C_l}{ \varepsilon C_L}\norm{ h_{F}^{-\frac12}\jmp{\bv_k} }_{\F_{k+1}}^2
  + \frac{C_L \varepsilon}{2}\norm{\grad{\bv_k}}_{\T_{k}}^2, \label{inequality_1}
 \end{align}
 where the last inequality is obtained thanks to Young's inequality with constant $\varepsilon_{}>0.$
 Similarly we derive the following bound for the second term
 \begin{gather}
 2 \form({\mathcal{L}}_S\bv_k,\grad{\bv_k})_{\T_{k}} 
 \le \frac{2 C_l}{ \epsilon C_L}\norm{ h_{F}^{-\frac12}\jmp{\bv_k} }_{\F_k}^2 + \frac{C_L \varepsilon}{2}\norm{\grad{\bv_k}}_{\T_{k}}^2, \label{inequality_2}
 \end{gather}
 with the same choice of Young's inequality constant $\varepsilon>0$.
 The result now follows by collecting the above upper bounds and by the definition of $\ak(.,.)$. {We note that again $h_F$ in case of hanging nodes is taken from the refined side.}
 \end{proof}
We are now in a position to state and prove the quasi-orthogonality result below.
 
\begin{proposition}[Quasi-orthogonality]
Let $\T_{k+1}$ and $\T_k$ be partitions of $\Omega$ such that $\T_{k+1}$ is obtained by refining $\T_k$.
Suppose that $\bu_k \in \bV_k$ and $\bu_{k+1} \in \bV_{k+1}$ {are} the IP approximations to the Stokes Problem~\eqref{eq:velocity} and $\be_{k}$ and $\be_{k+1}$ denote the discretization errors associated with these approximations respectively.
For  $ 0 < \varepsilon < {\frac{1}{2}}$ there is a constant $C_{comp} >0$ such that 
\begin{multline}
\label{eq:QuasiOrtho}
a_{k+1}^{IP}(\be_{k+1},\be_{k+1})
\le (1+\varepsilon){\ak}(\be_k,\be_k)
\\
-\frac{C_{L}}{2}\norm{\nabla{(\bu_k - \bu_{k+1})}}^2_{\T_{k+1}}
+ \frac{C_{comp}}{\gamma}\bigl(
\eta_k^2 + \eta_{k+1}^2\bigr)
\end{multline}
holds.
\end{proposition}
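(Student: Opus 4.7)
The starting point is the algebraic identity obtained from $\be_k = \be_{k+1} + \bw$, where $\bw := \bu_{k+1}-\bu_k$, by bilinearity of $a_{k+1}^{IP}$:
\begin{equation*}
a_{k+1}^{IP}(\be_{k+1},\be_{k+1}) = a_{k+1}^{IP}(\be_k,\be_k) - 2\,a_{k+1}^{IP}(\be_{k+1},\bw) - a_{k+1}^{IP}(\bw,\bw).
\end{equation*}
Because the canonical projection commutes with $\div$, the inclusion $\bV_k^0 \subset \bV_{k+1}^0$ holds, so that $\bw \in \bV_{k+1}^0$; this will enable a partial Galerkin orthogonality on $\bw$. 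The plan is to (i) control $a_{k+1}^{IP}(\be_k,\be_k)$ via the Mesh Perturbation Lemma, (ii) extract the desired negative contribution from $-a_{k+1}^{IP}(\bw,\bw)$ using coercivity, and (iii) estimate the cross term by splitting $\bw$ into its conforming and non-conforming parts.

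For step (i), since $\jmp{\be_k} = -\jmp{\bu_k}$, the Mesh Perturbation Lemma with parameter $\varepsilon_0>0$ gives $a_{k+1}^{IP}(\be_k,\be_k) \le (1+\varepsilon_0)\ak(\be_k,\be_k) + \tfrac{2C_l}{\varepsilon_0 C_L}\bigl(\norm{h_F^{-1/2}\jmp{\bu_k}}_{\F_k}^2 + \norm{h_F^{-1/2}\jmp{\bu_k}}_{\F_{k+1}}^2\bigr)$. The jump over $\F_{k+1}$ is bounded by twice that over $\F_k$ by~\eqref{eq_ref1} (only hanging-node contributions are new, $\bu_k$ being smooth inside each coarse cell), and the jump over $\F_k$ is controlled by $\tfrac{2C_J}{C_L\gamma}\eta_k^2$ via Lemma~\ref{lemma:estimator_jump}. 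For step (ii), coercivity~\eqref{coer}, which extends to $\bV(k+1)$ for sufficiently large $\gamma$ thanks to the lifting bound~\eqref{eq:lift estimate}, gives $a_{k+1}^{IP}(\bw,\bw) \ge C_L\norm{\bw}_{1,k+1}^2 \ge C_L\norm{\nabla\bw}_{\T_{k+1}}^2$.

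Step (iii) is the main obstacle. Decompose $\bw = \bw^c + \bw^\perp$ as in Lemma~\ref{lemma:decomp_velocity1}. The conforming component $\bw^c \in \bV_{k+1}^c \cap \bV_{k+1}^0$ lies in $\bH^1_0(\Omega) \cap \bV^0$, hence is a legitimate test function for the continuous Stokes problem; using the Hodge decomposition of $\bff$ together with $\div\bw^c = 0$, one finds $a_{k+1}^{IP}(\bu,\bw^c) = (\nabla\bu,\nabla\bw^c) = (\bff,\bw^c) = (\bff^0,\bw^c)$, whereas~\eqref{eq:velocity} yields $a_{k+1}^{IP}(\bu_{k+1},\bw^c) = (\bff^0,\bw^c)$. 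Thus $a_{k+1}^{IP}(\be_{k+1},\bw^c) = 0$, reducing the cross term to $a_{k+1}^{IP}(\be_{k+1},\bw^\perp)$, which by continuity~\eqref{cont} and~\eqref{eq:best_approx} (applied on $\T_{k+1}$) is bounded by $C_U\norm{\be_{k+1}}_{1,k+1}\norm{\bw^\perp}_{1,k+1}$ with $\norm{\bw^\perp}_{1,k+1}^2 \lesssim \norm{h_F^{-1/2}\jmp{\bw}}_{\F_{k+1}}^2 \lesssim \tfrac{1}{\gamma}(\eta_k^2 + \eta_{k+1}^2)$ after a triangle inequality on the jumps of $\bu_k$ and $\bu_{k+1}$ and another application of Lemma~\ref{lemma:estimator_jump}.

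Assembling the three estimates and applying Young's inequality with parameter $\delta>0$ to the cross term, then replacing $\norm{\be_{k+1}}_{1,k+1}^2$ by $C_L^{-1}a_{k+1}^{IP}(\be_{k+1},\be_{k+1})$ via coercivity and absorbing this contribution into the left-hand side, produces the desired inequality. The parameters $\varepsilon_0$ and $\delta$ are chosen small enough that $(1+\varepsilon_0)/(1-\delta C_L^{-2}) \le 1+\varepsilon$ and $C_L/(1-\delta C_L^{-2}) \ge C_L/2$, which is compatible exactly under the restriction $\varepsilon < \tfrac{1}{2}$. The delicate point is that the conforming piece $\bw^c$ must itself be divergence-free for partial Galerkin orthogonality to apply; this is guaranteed by the commuting interpolation property (Lemma~\ref{lemma:commutation} together with its Clément-type extension~\eqref{eq:interpolation-final}) that underlies the decomposition in Lemma~\ref{lemma:decomp_velocity1}.
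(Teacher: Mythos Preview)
Your proof is correct and relies on the same key ingredients as the paper (the Mesh Perturbation Lemma, partial Galerkin orthogonality on the conforming divergence-free subspace, the bound~\eqref{eq:best_approx} on the non-conforming remainder via jumps, and the estimator control of jumps~\eqref{eq:jumpcontrol}), but the algebraic organisation differs. You start from the expansion of $a_{k+1}^{IP}(\be_k,\be_k)$ with $\be_k=\be_{k+1}+\bw$ and decompose the increment $\bw=\bu_{k+1}-\bu_k$ itself as $\bw^c+\bw^\perp$ in $\bV_{k+1}^0$; this leaves a cross term $a_{k+1}^{IP}(\be_{k+1},\bw^\perp)$ whose control produces a $\norm{\be_{k+1}}_{1,k+1}^2$ contribution that must be re-absorbed into the left-hand side by a final Young step. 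The paper instead decomposes $\bu_k$ and $\bu_{k+1}$ separately, uses the Pythagorean identity $a_{k+1}^{IP}(\be_{k+1},\be_{k+1})=a_{k+1}^{IP}(\be_{k+1}+\bu_{k+1}^c-\bu_k^c,\,\cdot\,)-a_{k+1}^{IP}(\bu_{k+1}^c-\bu_k^c,\,\cdot\,)$ (valid because $a_{k+1}^{IP}(\be_{k+1},\bu_{k+1}^c-\bu_k^c)=0$), and rewrites $\be_{k+1}+\bu_{k+1}^c-\bu_k^c=\be_k+\bu_k^\perp-\bu_{k+1}^\perp$. This avoids any term involving $\be_{k+1}$ on the right and hence the absorption step. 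Your route is slightly more direct to set up, while the paper's avoids the need to divide through by a factor $<1$ at the end; either way the constants track the same dependencies and the restriction $\varepsilon<\tfrac12$ is not sharp in either argument. One small slip: the factor you write as $1-\delta C_L^{-2}$ should be $1-\delta C_L^{-1}$ (coming from $\norm{\be_{k+1}}_{1,k+1}^2\le C_L^{-1}a_{k+1}^{IP}(\be_{k+1},\be_{k+1})$), but this does not affect the conclusion.
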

\begin{proof}
Since we have partial Galerkin orthogonality with respect to the space $\bV^{c}_{ k+1}$, 
we use
 $ a_{k+1}^{IP}(\be_{k+1}, \bu_{k+1}^{c} - \bu_{k}^{c}) = 0$ and express
\begin{align}
a_{k+1}^{IP}(\be_{k+1},\be_{k+1})&= a_{k+1}^{IP}(\be_{k+1} +  \bu_{k+1}^{c} - \bu_{k}^{c} ,\be_{k+1} +  \bu_{k+1}^{c} - \bu_{k}^{c}) \quad -\nonumber\\
& \quad a_{k+1}^{IP}( \bu_{k+1}^{c} - \bu_{k}^{c}, \bu_{k+1}^{c} - \bu_{k}^{c})\nonumber\\
& \le  a_{k+1}^{IP}(\be_{k+1} +  \bu_{k+1}^{c} - \bu_{k}^{c} ,\be_{k+1} +  \bu_{k+1}^{c} - \bu_{k}^{c}) \quad -\nonumber\\
& \quad C_{L}\norm{ \bu_{k+1}^{c} - \bu_{k}^{c}}_{1,k+1}^2,  \label{qo:1}
\end{align}
where we have used the coercivity of $a_{k+1}^{IP}(.,.)$ for the last
inequality.  The last term in~\eqref{qo:1} can be replaced by using
the following inequality
\begin{gather*}
  \norm{\bu_{k+1}^{c} - \bu_{k}^{c}}_{1,k+1}^2 \ge \frac12\norm{\bu_{k+1} -\bu_{k}}^2_{1,k+1}
  -\norm{\bu_{k+1}^{\perp} - \bu_{k}^{\perp}}_{1,k+1}^2.
\end{gather*}
Also, we can write  $\be_{k+1} +  \bu_{k+1}^{c} - \bu_{k}^{c} = \ { \be_{k} + \bu_{k}^{\perp} - \bu_{k+1}^{\perp}}, $ thus applying Young's inequality with $\epsilon_1>0$ we obtain
\begin{gather}
\begin{split}
a_{k+1}^{IP}(\be_{k+1},\be_{k+1}) &\le (1+\epsilon_1) a_{k+1}^{IP}(\be_{k},\be_{k}) + C_{*}(2+\epsilon_1^{-1})\norm{\bu_{k+1}^{\perp}-\bu_k^{\perp}}^2_{1,k+1} \\
& - 2^{-1}C_L\ \norm{\bu_{k+1}-\bu_k}^2_{1,k+1} \quad
\text{where} \quad C_*=\text{max}\{C_{L}, C_{U}\}\label{eq:ineq1}.
\end{split}
\end{gather}
For the second term on the right hand side of~\eqref{eq:ineq1} and~\eqref{interp} we have
\begin{align}
 \norm{\bu_{k}^{\perp} - \bu_{k+1}^{\perp}}_{1,k+1}^2 &\le 2 \norm{\bu_{k}^{\perp}}_{1,k+1}^2 + 2\norm{\bu_{k+1}^{\perp}}_{1,k+1}^2 \nonumber \\
 &\le 4 C_{\text{interp}}\norm{ h_{F}^{-\frac12}\jmp{\bv_k} }_{\F_{k}}^2 +
 2 C_{\text{interp}}\norm{ h_{F}^{-\frac12}\jmp{\bv_k} }_{\F_{k+1}}^2 \label{eq:ineq2} .
\end{align}
Finally by applying the Lemma 6 with $0< \epsilon_2 <1$, taking into account~\eqref{eq:ineq2} and using~\eqref{eq:jumpcontrol} we have
\begin{gather*}
\begin{split}
a_{k+1}^{IP}(\be_{k+1},\be_{k+1}) &\le (1+\epsilon_1)\Big( (1+ \epsilon_2)a_{k}^{IP}(\be_k,\be_k)\ + \\
& \frac{2C_l}{C_L\epsilon_2}\big(\norm{ h_{F}^{-\frac12}\jmp{\bv_k} }_{\F_{k}}^2 + 
\norm{ h_{F}^{-\frac12}\jmp{\bv_k} }_{\F_{k+1}}^2\big)\Big)\ +
\\ &C_{*}(2+\epsilon_1^{-1})\big( 4C_{\text{interp}} \norm{ h_{F}^{-\frac12}\jmp{\bv_k} }_{\F_{k}}^2 +
2C_{\text{interp}} \norm{ h_{F}^{-\frac12}\jmp{\bv_k} }_{\F_{k+1}}^2
\big)\\
& - 2^{-1}C_L\ \norm{\bu_{k+1}-\bu_k}^2_{1,k+1}
.
\end{split}
\end{gather*}
The result follows by applying~\eqref{eq:jumpcontrol}, choosing $0 < \epsilon_i < 1, \ i =1,2$ so that $\varepsilon=\epsilon_1\epsilon_2 + \epsilon_1 + \epsilon_2< \frac{1}{2}$ and setting
\begin{align}
C_{comp}=\frac{2C_J}{C_L}\max\{\frac{2C_l}{C_L\epsilon_2} , 4
C_{*}(2+\epsilon_1^{-1})C_{\text{interp}}
\}.
\nonumber
\end{align}
\end{proof}
\subsection{Contraction Property} Given the three properties of reliability of the estimator, its reduction and a quasi-orthogonality in hand, we can now present the proof of the contraction property.

\begin{proposition}[Contraction Property]
Let $\T_{k+1}$ and $\T_k$ be partitions of $\Omega$ such that $\T_{k+1}$ is obtained by refining $\T_k$.
Suppose that $\bu_k \in \bV_k$ and $\bu_{k+1} \in \bV_{k+1}$ { are} the IP approximations to the Stokes Problem~\eqref{eq:velocity} and $\be_k$ and $\be_{k+1}$ be the discretization errors in $\bV(k)$ and $\bV(k+1)$ respectively.

Then, there are constants $\rho >0$  and $0<\delta<1$ such that {for sufficiently large $\gamma$}
\begin{gather}
\begin{split}
\label{eq:contraction}
a_{k+1}^{IP}(\be_{k+1},\be_{k+1}) + \rho \  \eta_{k+1}^2 \le \delta \ (a_{k}^{IP}(\be_{k},\be_{k}) + \rho\ \eta_{k}^2)
\end{split}
\end{gather}
holds.
\end{proposition}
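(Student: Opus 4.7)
The plan is the classical Cascon-Kreuzer-Nochetto-Siebert style contraction argument, adapted to the $(1+\varepsilon)$ perturbation coming from quasi-orthogonality and to the $\gamma^{-1}$ penalty terms. I would combine the three workhorses already established: quasi-orthogonality, the estimator reduction property, and reliability of the estimator.

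First, I would start from the quasi-orthogonality bound~\eqref{eq:QuasiOrtho} and add $\rho\,\eta_{k+1}^2$ to both sides, obtaining a right hand side of the form
\begin{gather*}
(1+\varepsilon)\,a_k^{IP}(\be_k,\be_k) - \tfrac{C_L}{2}\norm{\nabla(\bu_k-\bu_{k+1})}^2_{\T_{k+1}} + \tfrac{C_{comp}}{\gamma}\,\eta_k^2 + \bigl(\rho + \tfrac{C_{comp}}{\gamma}\bigr)\eta_{k+1}^2.
\end{gather*}
Next, I would apply the estimator reduction~\eqref{eq:EstReduction} only to the last $\eta_{k+1}^2$ factor, which turns it into $\hat\tau\,\eta_k^2$ plus a further multiple of $\norm{\nabla(\bu_k-\bu_{k+1})}^2_{\T_{k+1}}$. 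Choosing $\rho$ small enough and $\gamma$ large enough so that
\begin{gather*}
\bigl(\rho + C_{comp}/\gamma\bigr) C_\tau \le C_L/2,
\end{gather*}
the negative $\norm{\nabla(\bu_k-\bu_{k+1})}^2_{\T_{k+1}}$ term from quasi-orthogonality absorbs the positive contribution from the estimator reduction and can be discarded.

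What remains is an estimate of the form $a_{k+1}^{IP}(\be_{k+1},\be_{k+1}) + \rho\,\eta_{k+1}^2 \le (1+\varepsilon)\,a_k^{IP}(\be_k,\be_k) + D\,\eta_k^2$ with $D = C_{comp}/\gamma + (\rho+C_{comp}/\gamma)\hat\tau$. Since $\delta<1$ in the goal~\eqref{eq:contraction} rules out simply dominating $(1+\varepsilon)a_k^{IP}(\be_k,\be_k)$ by $\delta\,a_k^{IP}(\be_k,\be_k)$, I would use reliability~\eqref{eq:reliability} as the trading mechanism: split $(1+\varepsilon)\,a_k^{IP} = \delta'\,a_k^{IP} + (1+\varepsilon-\delta')\,a_k^{IP}$ and replace the second summand by $(1+\varepsilon-\delta')\,C_{rel}\,\eta_k^2$. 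Collecting all coefficients of $\eta_k^2$, the desired contraction $a_{k+1}^{IP}(\be_{k+1},\be_{k+1}) + \rho\,\eta_{k+1}^2 \le \delta'\bigl(a_k^{IP}(\be_k,\be_k) + \rho\,\eta_k^2\bigr)$ reduces to the scalar inequality
\begin{gather*}
\varepsilon\,C_{rel} + \tfrac{C_{comp}}{\gamma}(1+\hat\tau) < (1-\hat\tau)\,\rho,
\end{gather*}
with $\delta'<1$ then determined explicitly.

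The main obstacle, and the reason a careful ordering of the parameter choices is needed, is that $\hat\tau = (1+\tau)(1-\theta+\alpha)$ must be strictly less than $1$. This follows from the D\"orfler marking constant $\theta>0$ and the refinement ratio $\alpha<1$ provided $\tau>0$ is chosen small enough, so the scalar inequality above is satisfiable by taking: first $\tau, \alpha$ small enough to ensure $\hat\tau<1$, then $\varepsilon$ small (permitted by Proposition on quasi-orthogonality, which only requires $\varepsilon<1/2$), then $\gamma$ large enough so that simultaneously $(\rho+C_{comp}/\gamma)C_\tau\le C_L/2$ and $C_{comp}(1+\hat\tau)/\gamma$ is small, and finally $\rho$ in an appropriate range so that the net $\eta_k^2$ coefficient balances $\delta'\rho$ with $\delta'<1$. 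Once these parameters are fixed, setting $\delta := \delta'$ yields~\eqref{eq:contraction}.
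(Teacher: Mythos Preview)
Your argument is correct and follows essentially the same CKNS-style route as the paper: combine quasi-orthogonality~\eqref{eq:QuasiOrtho}, estimator reduction~\eqref{eq:EstReduction}, and reliability~\eqref{eq:reliability}, then balance parameters. The only cosmetic difference is that the paper substitutes the estimator reduction into the negative gradient term of~\eqref{eq:QuasiOrtho} and then fixes $\rho = \tfrac{C_L}{2C_\tau} - \tfrac{C_{comp}}{\gamma}$ exactly (the equality case of your constraint $(\rho + C_{comp}/\gamma)C_\tau \le C_L/2$), whereas you substitute it into the $\eta_{k+1}^2$ term and leave $\rho$ in a range; the resulting scalar condition for $\delta<1$ is identical.
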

\begin{proof} We begin by replacing $ \norm{\grad(\bu_k - \bu_{k+1})}^2_{\T_{k+1}}$ in the quasi-orthogonality by using the estimator reduction property for $\eta_{k+1}$ to obtain:
\begin{gather*}
\begin{split}
a_{k+1}^{IP}(\be_{k+1},\be_{k+1})+ \rho
\eta_{k+1}^2&\le
 (1 +\varepsilon)a_{k}^{IP}(\be_{k},\be_{k}) +\\
 &(\frac{C_{comp}}{\gamma} + \frac{C_{L}\hat{\tau} }{2C_{\tau}} )\eta_k^2 + 
 (\rho - \frac{C_{L}}{2C_{\tau}}+ \frac{C_{comp}}{\gamma})\eta_{k+1}^2. 
\end{split}
\end{gather*}
Next, we set $\rho= \frac{C_{L}}{2C_{\tau}} - \frac{C_{comp}}{\gamma}$, choose $\gamma  > \frac{2C_{\tau}C_{comp}}{C_{L}}$ so that $\rho >0$ 
and using the reliability of  $\eta_k^2$, we have:
\begin{gather*} 
\begin{split}
a_{k+1}^{IP}(\be_{k+1},\be_{k+1})+ \rho \eta_{k+1}^2  &\le \delta \ak(\be_k,\be_k) +
\Big(
C_{rel}(1+\varepsilon - \delta) +\frac{C_{L}\hat{\tau}}{2 C_{\tau}} + \frac{C_{comp}}{\gamma}
\Big) \eta_k^2.
\end{split}
\end{gather*}
We can choose $\delta >0$ such that:
\begin{gather*}
\begin{split}
\delta \rho&=
C_{rel}(1+\varepsilon - \delta) +\frac{C_{L}\hat{\tau}}{2 C_{\tau}} + \frac{C_{comp}}{\gamma} \\
\delta &=\frac{C_{rel}(1+\varepsilon) + \frac{C_{comp}}{\gamma} +\frac{C_{L}\hat{\tau}}{2 C_{\tau}}  }
{C_{rel} +  \frac{C_{L}}{2 C_{\tau}}-\frac{C_{comp}}{\gamma} }.
\end{split}
\end{gather*}
To ensure that $\delta < 1$ we first set
\begin{gather*}
\begin{split}
\varepsilon= \frac{C_{L}(1-\hat{\tau})}{\beta C_{rel}C_{\tau}} <1 \quad  \text{with} \quad \beta>2,
\end{split}
\end{gather*}
and $\hat{\tau}<\frac{1}{4}$. Next, we find $\gamma>0$ such that
\begin{gather*}
\begin{split}
 \frac{C_{L}(1-\hat{\tau})}{\beta C_{\tau}}+ \frac{C_{comp}}{\gamma} +\frac{C_{L}\hat{\tau}}{2 C_{\tau}}  &<
  \frac{C_{L}}{2 C_{\tau}}-\frac{C_{comp}}{\gamma}
\end{split}
\end{gather*}
so that $0< \delta<1$ provided 
\begin{gather*}
\begin{split}
\gamma > \frac{4\beta C_{\tau} C_{comp}}{(1- \hat{\tau})(\beta-2)C_{L}}.
\end{split}
\end{gather*}
This completes the proof of the contraction property.
\end{proof}
{
\section{Quasi-optimality} 
The goal of this section is to present the proof of the quasi-optimal
cardinality of the adaptive IP Method in terms of the degrees of
freedom (DOFs). This will be proved by a series of lemmas which will
be described in this section.  Although the proofs of these lemmas are
a straightforward extension of the arguments presented
in~\cite{BonitoNochetto10} and~\cite{CasconKreuzerNochettoSiebert08},
we will be presenting them below for completeness.

 We begin with the quasi-optimality of the sum of the discretization error in the DG norm and the oscillation the so-called the total error for the remainder of this section. 
\begin{lemma}[Quasi-optimality of the total error]
 Let $\bu \in \bV$ and $\bu_k \in \bV_k$ solve the weak forms~\eqref{eq:Stokes_V0} and \eqref{eq:velocity} respectively. Then, for all $\bv_k \in \bV_k$ we can find a constant $C_{opt}>0$ depending only on the shape regularity of $\T_k$ such that
\begin{gather}
\label{eq:quasi_opt}
\norm{\bu-\bu_k}_{1,k}^2 + \ok^2(\bu_k,\T_k) \le C_{opt} \inf_{
\bv_k\in\bV_k}\big(\norm{\bu-\bv_k}_{1,k}^2 + \ok^2(\bv_k,\T_k)\big).
\end{gather}
\end{lemma}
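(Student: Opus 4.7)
The plan is to fix an arbitrary $\bv_k \in \bV_k$, derive the pointwise inequality
\begin{gather*}
\norm{\bu - \bu_k}_{1,k}^2 + \ok^2(\bu_k,\T_k)
\lesssim \norm{\bu - \bv_k}_{1,k}^2 + \ok^2(\bv_k,\T_k),
\end{gather*}
and then take the infimum on the right to conclude. Two elementary reductions open the argument: the triangle inequality gives $\norm{\bu - \bu_k}_{1,k}^2 \le 2\norm{\bu - \bv_k}_{1,k}^2 + 2\norm{\bv_k - \bu_k}_{1,k}^2$, and the Perturbation of Oscillation proposition applied in the degenerate case $\T_m = \T_k$ (where $\T_m \cap \T_k = \T_k$) yields $\ok^2(\bu_k,\T_k) \le 2\ok^2(\bv_k,\T_k) + C_{osc}\norm{\nabla(\bv_k - \bu_k)}_{\T_k}^2$. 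Both reductions push the entire burden onto the discrete-to-discrete distance $\norm{\bv_k - \bu_k}_{1,k}^2$.

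Next, I would control $\norm{\bv_k - \bu_k}_{1,k}^2$ via coercivity~\eqref{coer} and a Strang-type splitting
\begin{gather*}
C_L\norm{\bv_k - \bu_k}_{1,k}^2
\le \ak(\bv_k - \bu, \bv_k - \bu_k) + \ak(\bu - \bu_k, \bv_k - \bu_k).
\end{gather*}
The first summand is routine: continuity~\eqref{cont} and Young's inequality deliver a contribution proportional to $\norm{\bu - \bv_k}_{1,k}^2$ plus a small multiple of $\norm{\bv_k - \bu_k}_{1,k}^2$ that is absorbed into the left-hand side. The second summand is the consistency residual, which is where the difficulty lies.

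The main obstacle is this residual $\ak(\bu - \bu_k, \bv_k - \bu_k)$, which does not vanish in the interior penalty setting because the test function $\bv_k - \bu_k$ is not confined to the conforming divergence-free subspace where partial Galerkin orthogonality holds. To overcome it, I would first reduce without loss of generality to $\bv_k \in \bV_k^0$ (since $\bu \in \bV^0$ and $\bu_k \in \bV_k^0$, the non-divergence-free part of $\bv_k$ only enlarges $\norm{\bu - \bv_k}_{1,k}$), and then decompose $\bv_k - \bu_k = (\bv_k - \bu_k)^c + (\bv_k - \bu_k)^\perp$ via Lemma~\ref{lemma:decomp_velocity1}. Partial Galerkin orthogonality on $\bV_k^c \cap \bV_k^0$ eliminates the conforming component, leaving only $|\ak(\bu - \bu_k, (\bv_k - \bu_k)^\perp)|$, which continuity bounds by $\norm{\bu - \bu_k}_{1,k}\norm{(\bv_k - \bu_k)^\perp}_{1,k}$. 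The estimate~\eqref{eq:best_approx} then controls the $\bV_k^\perp$-component by the jump seminorm, and splitting $\jmp{\bv_k - \bu_k} = \jmp{\bv_k - \bu} - \jmp{\bu_k}$ the first piece is dominated by $\norm{\bu - \bv_k}_{1,k}$ (since $\bu$ has no jumps) while the second is bounded by $\eta_k$ through Lemma~\ref{lemma:estimator_jump}.

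To close the loop, the efficiency~\eqref{eq:eff} replaces $\eta_k^2$ by $\norm{\bu - \bu_k}_{1,k}^2 + \ok^2(\bu_k,\T_k)$. Substituting all bounds, applying Young's inequality, and choosing $\gamma$ large enough so that the coefficients in front of the resulting $\norm{\bu - \bu_k}_{1,k}^2$ and $\ok^2(\bu_k,\T_k)$ pieces on the right are strictly less than one, these terms can be absorbed into the left. Taking the infimum over $\bv_k$ yields the quasi-optimality with $C_{opt}$ depending on shape regularity and on $C_L$, $C_U$, $C_J$, $C_{osc}$, $C_{\text{interp}}$, and $\gamma$.
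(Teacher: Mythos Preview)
Your proposal is correct and follows essentially the same approach as the paper: decomposition of the discrete error into conforming and nonconforming parts via Lemma~\ref{lemma:decomp_velocity1}, partial Galerkin orthogonality on $\bV_k^c$, control of the nonconforming component through~\eqref{eq:best_approx} and Lemma~\ref{lemma:estimator_jump}, efficiency~\eqref{eq:eff}, perturbation of oscillation, and absorption for large~$\gamma$. The only organizational difference is that the paper starts directly from $\ak(\bu-\bu_k,\bu-\bu_k)$ and inserts $\bv_k$ in the second slot, whereas you first apply the triangle inequality and then a Strang-type splitting for $\norm{\bv_k-\bu_k}_{1,k}$; after unfolding, the two arguments use the same estimates in the same way.
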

\begin{proof}We prove the result by deriving upper bounds for each of the terms appearing on the left hand side of~\eqref{eq:quasi_opt}. For bounding the first term, we consider any $\bv_k\in\bV_k$ and express $\bu_k$ and $\bv_k$ as $\bu_k=\bu_k^c + \bu_k^{\perp}$ and $\bv_k=\bv_k^c + \bv_k^{\perp}$ based on the decomposition~\eqref{eq:decomp} with $\bu_k^c, \bv_k^c \in \bV_k^c$ and $\bu_k^{\perp}, \bv_k^{\perp} \in \bV_k^{\perp}$. Next, using the coercivity~\eqref{coer} of $\ak(.,.)$ we have,    
\begin{align}
\label{eq2} 
\norm{\bu - \bu_k}^2_{1,k}&\lesssim \ak(\bu - \bu_k, \bu-\bu_k)\nonumber\\
&= \ak(\bu - \bu_k, \bu - \bv_k) - \ak(\bu - \bu_k, \bu_k^0 -\bv_k^{0}) \nonumber\\
&-\ak(\bu - \bu_k, \bu_k^{\perp} -\bv_k^{\perp}).
\end{align}
Thanks to partial Galerkin orthogonality enjoyed on $\bV_k^c$, the second term on the right hand side of inequality~\eqref{eq2} vanishes. Next using the continuity~\eqref{cont} of $\ak(.,.)$ and Cauchy-Schwarz inequality, the right hand side of the inequality~\eqref{eq2} becomes
\begin{align}
\label{eq3}
\norm{\bu - \bu_k}_{1,k}&\lesssim \norm{\bu-\bv_k}_{1,k} +\norm{\bu_k^{\perp}}_{1,k} + \norm{\bv_k^{\perp}}_{1,k}.
\end{align}
The last two terms on the right hand side of~\eqref{eq3} can be estimated from above by their respective jump terms
 $\norm{ h_{F}^{-\frac12}\jmp{\bu_k} }_{\F_{k}}^2$ and 
  $\norm{ h_{F}^{-\frac12}\jmp{\bv_k} }_{\F_{k}}^2$ due to~\eqref{eq:best_approx}. Furthermore, thanks to the estimator control of the jump terms~\eqref{eq:jumpcontrol} applied to $\norm{ h_{F}^{-\frac12}\jmp{\bu_k} }_{\F_{k}}^2$ we obtain
\begin{align}
\norm{\bu - \bu_k}^2_{1,k}&\lesssim \norm{\bu-\bv_k}^2_{1,k}  + \gamma^{-1}\eta^2_k\nonumber\\
&\lesssim \norm{\bu-\bv_k}^2_{1,k}  + \gamma^{-1}\big(\norm{\bu- \bu_k}_{1,k}^2 + \ok^2(\bu_k,\T_k) \big),\label{eq4}
\end{align}
where the last inequality on the right hand side of~\eqref{eq4} follows by the efficiency of the estimator~\eqref{eq:eff}.

We now turn our attention to bounding the second term on the left hand side of~\eqref{eq:quasi_opt} which can be obtained by applying the perturbation of oscillation~\eqref{eq:OscPerturb} to $\bu_k$ and $\bv_k$ with $\T_{k+1} \equiv \T_k$ and using $\norm{\bu_k-\bv_k}_{1,k} \le \norm{\bu_k-\bu}_{1,k}+\norm{\bu-\bv_k}_{1,k} $ to obtain
\begin{align} 
\ok^2(\bu_k,\T_k) &\lesssim  \norm{\bu - \bu_k}_{1,k}^2 +
 \norm{\bu - \bv_k}^2_{1,k} +\ok^2(\bv_k,\T_k), \label{eq5}\\
 &\lesssim \ok^2(\bv_k,\T_k) + \norm{\bu-\bv_k}^2_{1,k}  + \gamma^{-1}\big(\norm{\bu- \bu_k}_{1,k}^2 + \ok^2(\bu_k,\T_k) \big), \label{eq6}
\end{align}
where we have used~\eqref{eq4} to bound the first term of~\eqref{eq5}.
Finally, gathering the upper estimates~\eqref{eq4} and~\eqref{eq6} and choosing $\gamma>0$ large enough, we obtain the inequality.
\end{proof}
In the following lemma we derive the conditions on the penalty parameter $\gamma$ and the D\"orfler marking parameter $\theta$ that guarantees the selection of optimal number of elements in the marking strategy~\eqref{eq:marking_strategy}. 

\begin{lemma}[Optimal Marking]
Let $\bu \in \bV$ solve~\eqref{eq:Stokes_V0} and let $\T_{k+1}$ and $\T_k$ be partitions of $\Omega$ such that $\T_{k+1}$ is obtained by refining $\T_k$.
Suppose that $\bu_k \in \bV_k$ and $\bu_{k+1} \in \bV_{k+1}$ be the IP approximations to the Stokes Problem~\eqref{eq:velocity} respectively.
Assume that 
\begin{gather}
\label{eq:opt_assump}
\norm{\bu - \bu_{k+1}^c}_{1,{k+1}}^2 + \text{osc}_{k+1}^2(\bu_{k+1}^c,\T_{k+1}) \le \mu \big(\norm{\bu- \bu_k}_{1,k}^2 + \ok^2(\bu_k,\T_k)\big)
\end{gather}
where $\bu_{k+1}^c=\bu_{k+1} - \bu_{k+1}^{\perp} $ is the conforming
component as described in~\eqref{eq:decomp}, and
$0 < \mu < \frac{1}{2}$ is chosen as follows
\begin{gather}
\label{eq:opt_choices}
\mu= \frac{1 - \frac{C_{ub}}{\gamma C_{eff}}\big( 2 + C_{osc} \big)}{2}\Big(1- \frac{\theta^2}{\theta_{m}^2}\Big), 
\end{gather}
where $\theta \in(0, \theta_m)$
is the D\"orfler marking parameter with
\[\theta_m=\Big(\frac{C_{eff} - C_{ub} \gamma^{-1}(2 + C_{osc})}{1+ C_{ub}(2+ C_{osc})}\Big)^{\frac{1}{2}},
\]
where $C_{eff}$, $C_{ub}$ and $C_{osc}$ are constants from efficiency~\eqref{eq:eff}, localized quasi upper bound~\eqref{eq:local_ub} and perturbation of oscillation~\eqref{eq:OscPerturb} respectively.
Then, the set $\mathbb{M}_k$ of marked elements and the marking parameter $\theta$ satisfies
\begin{gather}
\label{eq:opt_marking}
\eta_{k}^2(\bu_k,\mathbb{M}_k) \ge \theta \eta_{k}^2(\bu_k,\T_k).
\end{gather}
\end{lemma}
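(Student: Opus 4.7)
The plan is to adapt the classical optimal-marking argument of Cascón-Kreuzer-Nochetto-Siebert, adjusted for the presence of the non-conforming component. The starting point is the hypothesis~\eqref{eq:opt_assump}, which I rearrange by multiplying by $2$ and moving everything to one side:
\begin{gather*}
(1-2\mu)\bigl(\|\bu-\bu_k\|_{1,k}^2+\ok^2(\bu_k,\T_k)\bigr)
\le \bigl(\|\bu-\bu_k\|_{1,k}^2 - 2\|\bu-\bu_{k+1}^c\|_{1,k+1}^2\bigr)
+ \bigl(\ok^2(\bu_k,\T_k) - 2\text{osc}_{k+1}^2(\bu_{k+1}^c,\T_{k+1})\bigr).
\end{gather*}
The strategy is then to estimate the two parenthesized terms separately and combine with efficiency~\eqref{eq:eff} on the left and the localized upper bound~\eqref{eq:local_ub} on the right.

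For the error difference, I apply the Young-type inequality $\|a\|^2 \le 2\|a-b\|^2 + 2\|b\|^2$ with $a=\bu-\bu_k$ and $b=\bu-\bu_{k+1}^c$ (noting that since $\bu$ is conforming, the jump contributions to the two DG norms coincide on the shared cells and only the discrete conforming part $\bu_{k+1}^c$ appears on the right), obtaining
\[
\|\bu-\bu_k\|_{1,k}^2 - 2\|\bu-\bu_{k+1}^c\|_{1,k+1}^2 \le 2\|\bu_{k+1}^c-\bu_k\|_{1,k}^2.
\]
For the oscillation difference, I decompose $\ok^2(\bu_k,\T_k) = \ok^2(\bu_k,\Mk) + \ok^2(\bu_k,\T_k\setminus\Mk)$ and use $\text{osc}_{k+1}^2(\bu_{k+1}^c,\T_{k+1}) \ge \text{osc}_{k+1}^2(\bu_{k+1}^c,\T_k\cap\T_{k+1})$ together with the perturbation-of-oscillation bound~\eqref{eq:OscPerturb} applied on the shared cells $\T_k\cap\T_{k+1}=\T_k\setminus\Mk$. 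This yields
\[
\ok^2(\bu_k,\T_k) - 2\text{osc}_{k+1}^2(\bu_{k+1}^c,\T_{k+1})
\le \ok^2(\bu_k,\Mk) + C_{osc}\|\nabla(\bu_k-\bu_{k+1}^c)\|_{\T_{k+1}}^2.
\]

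Collecting the two estimates and invoking $\ok \le \eta_k$ (equation~\eqref{osc_bound}) as well as the quasi-localized upper bound~\eqref{eq:local_ub} with $\Rka = \Mk$, the right-hand side becomes
\[
(2+C_{osc})\bigl\|\bu_{k+1}^c-\bu_k\bigr\|_{1,k}^2 + \eta_k^2(\bu_k,\Mk)
\le (2+C_{osc})C_{ub}\bigl(\eta_k^2(\bu_k,\Mk) + \gamma^{-1}\eta_k^2\bigr) + \eta_k^2(\bu_k,\Mk).
\]
Applying efficiency~\eqref{eq:eff} on the left then gives
\[
(1-2\mu)C_{eff}\,\eta_k^2
\le \bigl[1+(2+C_{osc})C_{ub}\bigr]\eta_k^2(\bu_k,\Mk)
+ (2+C_{osc})C_{ub}\gamma^{-1}\eta_k^2.
\]
Isolating $\eta_k^2(\bu_k,\Mk)$ and inserting the prescribed values of $\mu$ and $\theta_m$ from~\eqref{eq:opt_choices}, a direct computation verifies that the coefficient of $\eta_k^2$ obtained equals $\theta^2$, which yields the Dörfler condition~\eqref{eq:opt_marking}.

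The main obstacle will be the bookkeeping in Step 2: reconciling the two mesh-dependent DG norms $\|\cdot\|_{1,k}$ and $\|\cdot\|_{1,k+1}$ in the triangle inequality, since the jump penalty weights $h_F^{-1/2}$ differ on refined edges. Here the key observation is that $\bu\in\bH^1$ has no jumps, so only the discrete terms involving $\bu_k$ and $\bu_{k+1}^c$ carry jump contributions, and on unrefined faces the two norms coincide while on refined faces $\jmp{\bu_{k+1}^c}=0$ by construction. The final algebraic manipulation to match the explicit expression for $\mu$ with $\theta^2$ is bookkeeping, but one must track carefully the factors $C_{eff}$, $C_{ub}$, $C_{osc}$ to make sure the inequality is sharp enough for the claimed range $\theta\in(0,\theta_m)$.
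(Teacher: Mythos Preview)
Your approach is essentially the same as the paper's: start from the hypothesis, apply efficiency~\eqref{eq:eff} on the left, split the right into an error-difference and an oscillation-difference, handle the first by a triangle (Young) inequality and the second by the perturbation-of-oscillation estimate~\eqref{eq:OscPerturb} on the unrefined cells, and close with the quasi-localized upper bound~\eqref{eq:local_ub}. The algebra leading to the stated $\theta_m$ and $\mu$ is identical.

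There is one genuine slip. The quasi-localized upper bound~\eqref{eq:local_ub} is stated for the set $\Rka$ of \emph{refined} cells that takes $\T_k$ to $\T_{k+1}$, not for the marked set $\Mk$. Because of the one-irregularity closure step, one generally has $\Mk\subsetneq\Rka$, so you cannot simply declare ``$\Rka=\Mk$'' when invoking~\eqref{eq:local_ub}, and likewise your identification $\T_k\cap\T_{k+1}=\T_k\setminus\Mk$ need not hold. The paper's proof works instead with the refined set and its patch $\omega(\mathbb{R}_k)$: the oscillation is split as $\ok^2(\bu_k,\omega(\mathbb{R}_k))+\ok^2(\bu_k,\T_k\setminus\omega(\mathbb{R}_k))$, and the conclusion actually obtained is
\[
\eta_k^2\bigl(\bu_k,\omega(\mathbb{R}_k)\bigr)\ge\theta^2\,\eta_k^2(\bu_k,\T_k),
\]
i.e.\ the Dörfler property for the (patch of the) refined set, not for $\Mk$. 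The statement with $\Mk$ is only meaningful in the next lemma (cardinality of $\Mk$), where the minimal-cardinality assumption on the marking is used to pass from $\omega(\mathbb{R}_k)$ to $\Mk$. Once you replace $\Mk$ by $\mathbb{R}_k$ (or $\omega(\mathbb{R}_k)$) throughout, your argument and the paper's coincide.
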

\begin{proof}
 Using the given assumption~\eqref{eq:opt_assump}, the efficiency of the estimator~\eqref{eq:eff}, and since  $0<\mu<\frac12,$ we have
\begin{align}
 (1-2\mu)C_{eff} \eta_k^2 &\le \norm{\bu-\bu_k}_{1,k}^2+ \ok^2(\bu_k,\T_k) - 2\text{osc}_{k+1}^2(\bu_{k+1}^c,\T_{k+1}) \nonumber\\ 
 &-2\norm{\bu- \bu_{k+1}^c}_{1,k+1}^2 \nonumber\\
 &\le 2\norm{\bu_k-\bu_{k+1}^c}^2_{1,k+1}+\ok^2(\bu_k,\T_k)- \nonumber\\
 & \quad 2\text{osc}_{k+1}^2(\bu_{k+1}^c,\T_{k+1}). \label{eq:card_marking}
\end{align}
The first term on the right hand side of the inequality~\eqref{eq:card_marking} can be bounded above using the localized quasi upper bound~\eqref{eq:local_ub} with constant $C_{ub}$. In order to bound the last two terms of~\eqref{eq:card_marking}, we first express $\ok^2(\bu_k,\T_k)=\ok^2(\bu_k,\omega(\mathbb{R}_k))+\ok^2(\bu_k,\T_k \setminus \omega(\mathbb{R}_k))$ where $\mathbb{R}_k$ denotes the set of refined elements of $\T_k$ needed to obtain $\T_{k+1}$ and $\omega(\cdot)$ is defined in~\eqref{defn:patch1} and~\eqref{defn:patch2}.
 Next, by using the oscillation upper bound~\eqref{osc_bound} and the perturbation of oscillation~\eqref{eq:OscPerturb} we arrive at the following estimate:
\begin{align}
 \ok^2(\bu_k,\T_k)&\le \eta_k^2(\bu_k, \omega(\mathbb{R}_k))+ 
  2\text{osc}_{k+1}^2(\bu_{k+1}^c,\T_{k+1}) \nonumber \\
 & + C_{osc}\norm{\bu_k - \bu_{k+1}^c}_{1,k+1}^2 \label{osc1}.
 \end{align} 
Again, applying the localized quasi upper bound~\eqref{eq:local_ub} for the last term of~\eqref{osc1} we obtain
\begin{gather}
 (1-2\mu)C_{eff} \eta_k^2(\bu_k) \le (1 + C_{ub}(2+C_{osc}))\eta_k^2(\bu_k, \omega(\mathbb{R}_k)) + \nonumber\\
  \gamma^{-1}(C_{ub}C_{osc} + 2C_{ub})\eta_k^2.
\end{gather}
The estimate now follows by plugging in the choices of $\mu$ and $\theta$ as given in~\eqref{eq:opt_choices}.
\end{proof}
Based on the total error, we can now introduce the DG and the continuous Galerkin (CG) approximation classes denoted by $\mathbb{A}_s$ and $\mathbb{A}^c_s$ respectively.  
\begin{align*}
\text {For each }N \ge 0,&\\ 
\mathcal{T}_N:=\{\T_{\asterisk} &\text{ is a partition of } \Omega \text{ generated from }\T_0 : \#\T_{\asterisk} -\#\T_0 \le N\},
\end{align*}
so that the quality of
the best approximation is measured by:
  \begin{gather*}
\sigma_N^{DG}(\bu,f):=\inf_{\T_{\asterisk} \in \mathcal{T}_N}\inf_{\bv \in \bV_{\asterisk}}\Big(\norm{\bu- \bv}_{1,\asterisk}^2 + \text{osc}_{\asterisk}^2(\bv,\T_{\asterisk})\Big)^{\frac12}.
\end{gather*} 
For $s>0$, we can define the approximation class as
\begin{gather*}
\begin{split}
&\mathbb{A}_s:=\{(\bu,\bff) \in \H^1_0(\Omega)\times L^2(\Omega): |(\bu,\bff)|_{s}<\infty\},\\
&\text{where } |(\bu,\bff)|_s:= \sup_{N>0}\big(N^s\sigma_N^{DG}(\bu,\bff )\big).
\end{split}
\end{gather*}
 As remarked in~\cite{BonitoNochetto10} and the references therein, since the approximation class denotes the set of function pairs $(\bu,\bff)$ for which the best approximation of the total error decays like $N^{-s}$, this limits the range of values of $s$ to the interval $(0, m/2]$ where $m$ denotes the order of the Raviart Thomas space. 

Analogously, we define $\sigma_N^{CG}(\bu,\bff)$ and $ |(\bu,\bff)|_s^{CG}$ as 
 \begin{gather*}
 \begin{split}
 &\sigma_N^{CG}(\bu,f):=\inf_{\T_{\asterisk} \in \mathcal{T}_N}\inf_{\bv \in \bV^c_{\asterisk}}\Big(\norm{\bu- \bv}_{1,\asterisk}^2 + \text{osc}_{\asterisk}^2(\bv,\T)\Big)^{1/2},\\
& \text{and } |(\bu,\bff)|_s^{CG}:= \sup_{N>0}\big(N^s\sigma_N^{CG}(\bu,\bff )\big),
 \end{split}
 \end{gather*}
so that the conforming approximation class is defined as
\[ 
\mathbb{A}^c_s:=\{(\bu,\bff) \in \H^1_0(\Omega)\times L^2(\Omega): |(\bu,\bff)|_{s}^{CG}<\infty\}.
\]
\begin{remark}
In order to prove the quasi-optimality for the DG approximation, we follow the standard approach of establishing the equivalence of $\mathbb{A}_s$ and $\mathbb{A}_s^c, \text{ for }0< s \le \frac{m}{2}$. The proof of this equivalence can be found in~\cite{BonitoNochetto10} we will omit its presentation here.

\end{remark}
Before stating the final lemma needed to prove the quasi-optimal rate, we make the following two assumptions on the sequence of partitions $\{\T_k\}_{k>0}$ generated from the initial mesh $\T_0$ and on $\mathbb{M}_{k}$, the set of marked elements.
\begin{assumption}[]
For a given initial mesh $\T_0$ and the sequence $\{\T_k\}_{k>0}$ generated by the recursive application of the adaptive algorithm~\eqref{algo:afem}, we can find a constant $\Lambda_0 >0$ such that 
\begin{gather}
\label{ComplexityRefine}
\#\T_k - \#\T_0 \le \Lambda_0 \sum_{j=0}^{k-1} \#\mathbb{M}_j \quad \text{holds,}
\end{gather}
where $\#\T_k$ and $\#\mathbb{M}_j$ denote the cardinality of $\T_k$ and $\mathbb{M}_j$ respectively.
\end{assumption}

\begin{assumption} 
For $ \theta \in (0,\theta_m)$, with $\theta_m$ given by~\eqref{eq:opt_assump},
\begin{center}
the set of marked elements $\mathbb{M}_{k}$ has the minimal cardinality.
\end{center}
 
\end{assumption}
Achieving the quasi-optimal complexity of the adaptive algorithm involves making the suitable choices of $\gamma$ and $\theta$ as derived in the Lemma 8 above.
 These choices coupled with Assumption 2 allow us to prove the quasi-optimal complexity of the adaptive algorithm but before that, we need the following lemma which presents an upper bound the cardinality of $\mathbb{M}_k$ in terms of the total error. 
\begin{lemma}[Cardinality of $\Mk$]Suppose that Assumption 1 and 2 hold and that 
$\mu$ and $\theta$ are chosen as in Lemma 8. Furthermore, suppose that $\bu \in \bV$ solves~\eqref{eq:Stokes} and $\bu_k \in \bV_k$ solves~\eqref{eq:lifted-ip-Stokes}. If $(\bu, \bff) \in \mathbb{A}_s$ for $0<s\le \frac{m}{2}$, then, 
\begin{gather}
\label{eq:card_M}
\#\mathcal{M}_k \lesssim \big(\norm{\bu- \bu_k}_{1,k}^2 + \ok^2(\bu_k,\T_k)\big)^{-\frac{1}{2s}}.
\end{gather}
\end{lemma}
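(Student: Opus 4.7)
The plan is to construct, via the approximation class hypothesis, an auxiliary refinement of $\T_0$ whose cardinality realizes the optimal rate $\epsilon^{-1/s}$ for a suitably chosen tolerance $\epsilon$, overlay it with $\T_k$, verify that the resulting common refinement satisfies the hypothesis~\eqref{eq:opt_assump} of Lemma~8, and finally transfer the cardinality bound from the set of overlay-refined cells to $\Mk$ by invoking Assumption~2 (minimal cardinality of the marked set).

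Concretely, I first set
\begin{gather*}
  \epsilon^2 := \mu\bigl(\norm{\bu-\bu_k}_{1,k}^2 + \ok^2(\bu_k,\T_k)\bigr),
\end{gather*}
so that the right-hand side of~\eqref{eq:opt_assump} equals $\epsilon^2$. Using $(\bu,\bff)\in\mathbb{A}_s$ and the equivalence $\mathbb{A}_s\equiv\mathbb{A}_s^c$ recorded in the remark preceding this lemma, I extract a partition $\T_\epsilon$ refined from $\T_0$ and a conforming function $\bv_\epsilon\in\bV_\epsilon^c$ with
\begin{gather*}
  \#\T_\epsilon - \#\T_0 \;\lesssim\; |(\bu,\bff)|_{s}^{1/s}\,\epsilon^{-1/s},
  \qquad
  \norm{\bu-\bv_\epsilon}_{1,\epsilon}^2 + \text{osc}_\epsilon^2(\bv_\epsilon,\T_\epsilon) \;\lesssim\; \epsilon^2.
\end{gather*}

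Next, I form the overlay $\T_\asterisk := \T_k \oplus \T_\epsilon$, the coarsest common refinement of $\T_k$ and $\T_\epsilon$, for which the standard counting identity gives $\#\T_\asterisk - \#\T_k \le \#\T_\epsilon - \#\T_0$. Let $\bu_\asterisk\in\bV_\asterisk$ be the IP solution on $\T_\asterisk$, decomposed via~\eqref{eq:decomp} as $\bu_\asterisk=\bu_\asterisk^c+\bu_\asterisk^\perp$. The quasi-optimality of the total error (Lemma~7) applied on $\T_\asterisk$ to the competitor $\bv_\epsilon\in\bV_\asterisk$, combined with the bound~\eqref{eq:best_approx} for $\bu_\asterisk^\perp$, the estimator control of the jumps~\eqref{eq:jumpcontrol}, and the perturbation-of-oscillation inequality~\eqref{eq:OscPerturb} used to pass from $\bu_\asterisk$ to $\bu_\asterisk^c$, yields
\begin{gather*}
  \norm{\bu-\bu_\asterisk^c}_{1,\asterisk}^2 + \text{osc}_\asterisk^2(\bu_\asterisk^c,\T_\asterisk)
  \;\lesssim\; \epsilon^2
  \;=\; \mu\bigl(\norm{\bu-\bu_k}_{1,k}^2 + \ok^2(\bu_k,\T_k)\bigr),
\end{gather*}
which is precisely the hypothesis~\eqref{eq:opt_assump} of Lemma~8 with $(\T_{k+1},\bu_{k+1}^c)$ replaced by $(\T_\asterisk,\bu_\asterisk^c)$, provided $\gamma$ is taken large enough to absorb the implicit constants into $\mu$.

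Lemma~8 then guarantees that the set $\Rka$ of cells of $\T_k$ that must be refined to produce $\T_\asterisk$ satisfies the Dörfler criterion~\eqref{eq:opt_marking} with parameter $\theta$. By Assumption~2, $\Mk$ has minimal cardinality among Dörfler-qualifying subsets of $\T_k$, so $\#\Mk \le \#\Rka \le \#\T_\asterisk - \#\T_k \lesssim \epsilon^{-1/s}$. Unwinding the definition of $\epsilon$ gives the claim. I expect the main obstacle to be the middle step: transferring the best-approximation bound from $\bu_\asterisk$, for which Lemma~7 applies directly, to its conforming component $\bu_\asterisk^c$. This demands careful control of the jump-driven non-conforming part via~\eqref{eq:best_approx}--\eqref{eq:jumpcontrol} and of the oscillation perturbation by factors of $\gamma^{-1}$, so the hidden constants inherit the same large-$\gamma$ condition already present in Lemma~8.
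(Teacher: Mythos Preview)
Your proposal is correct and follows essentially the same route as the paper: choose a tolerance proportional to the current total error, invoke the equivalence $\mathbb{A}_s\equiv\mathbb{A}_s^c$ to extract an auxiliary partition $\T_\epsilon$, form the overlay with $\T_k$, verify the hypothesis of Lemma~8 on the overlay via Lemma~7, and transfer the cardinality bound to $\Mk$ through Assumption~2. The only cosmetic difference is that the paper normalizes the tolerance as $\varepsilon^2=\mu\,C_{opt}^{-1}\bigl(\norm{\bu-\bu_k}_{1,k}^2+\ok^2(\bu_k,\T_k)\bigr)$ so that after applying Lemma~7 the constant $C_{opt}$ cancels exactly and~\eqref{eq:opt_assump} holds with equality rather than with a hidden constant; your remark about absorbing constants via large~$\gamma$ is not quite the right mechanism for this particular constant, but rescaling $\epsilon$ as the paper does fixes it immediately.
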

\begin{proof} We first choose $\varepsilon>0$
 as
 \begin{gather}
 \begin{split}
 \label{eps}
 \varepsilon^2=\mu\frac{\Big(\norm{\bu- \bu_k}_{1,k}^2 + \ok^2(\bu_k,\T_k)\Big)}{C_{opt}},
 \end{split}
 \end{gather}
 where $C_{opt}$ is the constant arising from the quasi-optimality of the total error~\eqref{eq:quasi_opt}.
 
 Next, since $(\bu, \bff) \in \mathbb{A}_s\equiv\mathbb{A}_s^c$, so corresponding to the above choice of  $\varepsilon$ given by~\eqref{eps}, the definition of $\mathbb{A}_s^c$ implies that we can find $(\bu_{\varepsilon}, \T_{\varepsilon})$ with $\bu_{\varepsilon}\in \bV_{\varepsilon}\equiv\bV(\T_{\varepsilon})$ such that
 \begin{gather}
 \begin{split}
 \#\T_{\varepsilon}\lesssim \Big(\frac{|(\bu,\bff)|_s}{\varepsilon}\Big)^{\frac{1}{s}},\quad
 \norm{\bu- \bu^c_{\varepsilon}}_{1,{\varepsilon}}^2 + \text{osc}_{\T_\varepsilon}^2(\bu^c_{\varepsilon},\T_{\varepsilon}) \le \varepsilon^2.\label{card2}
 \end{split}
 \end{gather}
 In order to establish a relation between $\bu_k$ and $\bu_{\varepsilon}$ which are defined on partitions $\T_k$ and $\T_{\varepsilon}$ respectively, we introduce a partition $\T_{k\varepsilon}=\T_k \oplus \T_{\varepsilon} $ and  $\bu_{k\varepsilon}\in \bV_{k\varepsilon}\equiv\bV(\T_{k\varepsilon})$ as the corresponding solution with respect to this partition. The solution can be decomposed into $\bu_{k\varepsilon}= \bu_{k\varepsilon}^c +\bu_{k\varepsilon}^{\perp}$ according to~\eqref{eq:decomp} where the conforming component $\bu_{k\varepsilon}^c \in \bV_{k\varepsilon}^c$ satisfies
 \begin{gather*}
  a^{IP}_{{k\varepsilon}}(\bu_{k\varepsilon}^c , \bv_{k \varepsilon}^c)= (\bff,\bv_{k\varepsilon}^c) \quad \forall \bv_{k\varepsilon}^c\in \bV_{k\varepsilon}^c.
 \end{gather*}
Thus applying Lemma 7 to $\bu_{k\varepsilon}^c \in \bV_{k\varepsilon}^c$ we have
\begin{align}
 \norm{\bu-\bu_{k\varepsilon}^c}^2_{1,k\varepsilon} + \text{osc}_{\T_{k\varepsilon}}^2(\bu_{k\varepsilon}^c,\T_{k\varepsilon}) &\le C_{opt} \big(\norm{\bu-\bu^c_{\varepsilon}}_{1,\varepsilon}^2 + \text{osc}_{\T_{\varepsilon}}^2(\bu^c_{\varepsilon},\T_{ \varepsilon})\big)
 \le C_{opt} \varepsilon^2 \nonumber\\
 &=\mu\Big(\norm{\bu- \bu_k}_{1,k}^2 + {osc}_{k}^2(\bu_{k},\T_{k})\Big) \label{card1}
 \end{align}
 where the last inequality of~\eqref{card1} is obtained by using the definition of $\varepsilon$ given in~\eqref{eps} and using~\eqref{card2}.
Inequality~\eqref{card1} implies that the following D\"orfler marking property holds i.e.,
\begin{gather*}
\begin{split}
\eta_{k}^2(\bu_k,\mathbb{R}_{k{\varepsilon}}) \ge \theta \eta_{k}^2(\bu_k,\T_k), \quad 0 < \theta< \theta_m.
\end{split}
\end{gather*}
Since the set of marked elements $\mathbb{M}_k$ has the minimum cardinality,
we have
\begin{gather}
\begin{split}
\label{card3}
\#\mathbb{M}_k &\lesssim \#\mathbb{R}_{{kk \varepsilon}}\le \#\T_{k \varepsilon}-\#\T_{k},
\end{split}
\end{gather}
where $\mathbb{R}_{{kk \varepsilon}}$ denotes the set of refined cells needed to obtain $\T_{k\varepsilon}$ from $\T_k$.
Also, due to the definition of $\T_{k \varepsilon}, $ we have
\begin{gather}
\begin{split}
\#\T_{k \varepsilon}-\#\T_k &= \#\T_{ \varepsilon}- \#\T_0 \le \T_{ \varepsilon}\\
&\lesssim \Big(\frac{|(\bu,\bff)|_s}{\varepsilon}\Big)^{\frac{1}{s}} \label{card4}.
\end{split}
\end{gather}
where~\eqref{card4} holds due to~\eqref{card2}. Finally, combining the estimates~\eqref{card3} and~\eqref{card4} and using the definition of $\varepsilon$ given in~\eqref{eps}, we obtain the result.
\end{proof}
%
With all the prerequisites in hand, we are in a position to state and proof the main result of this section. The proof follows the same approach as presented in~\cite{BonitoNochetto10} and we present it below for completeness.
\begin{theorem}[Quasi-Optimality]
Suppose that the marking parameter $\theta \in (0, \theta_m)$ where $\theta_m$ is defined in Lemma 8  and suppose that Assumptions 1 and 2 hold true.
For a given $\bff \in L^2(\Omega)$, suppose that $\bu \in \bH_0^1(\Omega)$ solves the weak form~\eqref{eq:Stokes_V0} of the Stokes problem. Furthermore, let the triple 
$\{(\T_k, \bV_k,\bu_k )\}_{k \ge 0}$ denote the sequence of partitions, discrete solution spaces and solutions respectively generated by a recursive application of adaptive algorithm~\eqref{algo:afem}. Furthermore, suppose that $(\bu,\bff) \in \mathbb{A}_s$. Then, for small enough mesh size $h_0$ corresponding to the initial mesh $\T_0$, there exists $C>0$ satisfying
\begin{gather}
\big(\norm{\bu -\bu_k}^2_{1,k} + \ok^2(\bu_k, \T_k)\big)^{1/2} \le C \#\T_k^{-s}.
\end{gather}
\end{theorem}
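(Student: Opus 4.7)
The plan is to combine the contraction property of Proposition~6 with the cardinality estimate of Lemma~9 and the mesh complexity Assumption~1. First I would set $E_k^2 := \norm{\bu-\bu_k}_{1,k}^2 + \ok^2(\bu_k,\T_k)$ and observe that the reliability~\eqref{eq:reliability} and the oscillation bound~\eqref{osc_bound} give $E_k^2 \lesssim \ak(\be_k,\be_k) + \rho\,\eta_k^2$, while the efficiency~\eqref{eq:eff} gives the reverse bound $\ak(\be_k,\be_k) + \rho\,\eta_k^2 \lesssim E_k^2$. Hence the contraction~\eqref{eq:contraction} transfers directly to the total error, yielding the geometric decay
\begin{gather*}
  E_k^2 \;\lesssim\; \delta^{k-j}\, E_j^2,
  \qquad 0 \le j \le k,
\end{gather*}
with the same $\delta\in(0,1)$ (up to a multiplicative constant absorbed into the implied constant).

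Next I would apply Lemma~9 at each step $j=0,\dots,k-1$ to obtain $\#\mathbb{M}_j \lesssim E_j^{-1/s}$, since the hypotheses on $\theta$ and $\gamma$ are inherited from Lemma~8. Invoking Assumption~1~\eqref{ComplexityRefine} and summing gives
\begin{gather*}
  \#\T_k - \#\T_0
  \;\le\; \Lambda_0 \sum_{j=0}^{k-1}\#\mathbb{M}_j
  \;\lesssim\; \sum_{j=0}^{k-1} E_j^{-1/s}.
\end{gather*}
Using the geometric decay from Step~1 in the form $E_j^{-1/s} \le \delta^{(k-j)/(2s)}\, E_k^{-1/s}$, the right-hand side is bounded by a convergent geometric series in $j$ (since $\delta<1$ and $s>0$), so
\begin{gather*}
  \#\T_k - \#\T_0
  \;\lesssim\; E_k^{-1/s} \sum_{j=0}^{k-1} \delta^{(k-j)/(2s)}
  \;\lesssim\; E_k^{-1/s}.
\end{gather*}
Rearranging and using that for sufficiently small initial mesh size $h_0$ we have $\#\T_k \lesssim \#\T_k - \#\T_0 + \#\T_0 \lesssim \#\T_k - \#\T_0$ (absorbing $\#\T_0$ into the constant for $k$ large, and handling small $k$ by a trivial estimate) delivers the claimed bound $E_k \lesssim (\#\T_k)^{-s}$.

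The main obstacle I expect is the book-keeping needed to legitimately apply the contraction factor uniformly across the sum: strictly speaking, the contraction is on the quantity $\ak(\be_k,\be_k) + \rho\,\eta_k^2$ rather than on $E_k^2$ itself, and the constants $C_{rel}$, $C_{eff}$ switching between the two depend only on shape regularity, so the transition is clean, but one must check that the choices of $\mu$ and $\theta$ made in Lemma~8 are simultaneously compatible with the choice of $\gamma$ forced by the contraction Proposition~6. The second delicate point is the hypothesis~\eqref{eq:opt_assump} of Lemma~9, which is needed to invoke the cardinality bound at each step; this is supplied precisely by the geometric decay of Step~1 once $\delta$ and $\mu$ are chosen consistently. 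With those alignments verified, the remainder is a standard summation argument in the spirit of~\cite{BonitoNochetto10,CasconKreuzerNochettoSiebert08}.
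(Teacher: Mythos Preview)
Your proposal is correct and follows essentially the same route as the paper: combine Assumption~1 with Lemma~9 to bound $\#\T_k$ by a sum of $E_j^{-1/s}$, then use the contraction~\eqref{eq:contraction} (together with the equivalence $E_k^2 \simeq \ak(\be_k,\be_k)+\rho\eta_k^2$ from reliability/efficiency) to collapse the geometric series. The only cosmetic difference is that the paper keeps the contraction quantity $\ak(\be_k,\be_k)+\eta_k^2$ throughout the summation and converts back to $E_k$ only at the very end via~\eqref{osc_bound} and~\eqref{coer}, whereas you convert first and work directly with $E_k$; both are equivalent. One small clarification: your worry about needing to supply~\eqref{eq:opt_assump} at each step is unnecessary, since that inequality is a hypothesis of Lemma~8, not of Lemma~9; Lemma~9 verifies it internally via the auxiliary mesh $\T_\varepsilon$, so it can be invoked directly under Assumptions~1--2 and the stated choices of $\theta$ and $\mu$.
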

\begin{proof}
We begin with by observing that $\#\T_k \lesssim  
\#\T_k - \#\T_0 $ thus using assumption 1 in conjunction with Lemma 9 we have
\begin{align}
\#\T_k &\lesssim \sum_{j=0}^{k-1} \#\mathbb{M}_j 
\lesssim \sum_{j=0}^{k-1} \big(\norm{\bu- \bu_j}_{1,j}^2 + \text{osc}_j^2(\bu_j,\T_j)\big)^{-\frac{1}{2s}}\nonumber\\
&\lesssim\sum_{j=0}^{k-1} \big(\norm{\bu- \bu_j}_{\aj}^2 + \eta_{j}^2(\bu_j,\T_j)\big)^{-\frac{1}{2s}} \label{eq:qo1}
\end{align}
where the last inequality~\eqref{eq:qo1} above holds due to the efficiency of the estimator and the continuity of $\aj(.,.)$. Finally, applying the contraction property~\eqref{eq:contraction} we arrive at 
\begin{align}
\#\T_k  &\lesssim \sum_{j=0}^{k-1}\big(\delta^{1/s}\big)^j \big(\norm{\bu- \bu_k}_{\ak}^2 + \eta_{k}^2(\bu_k,\T_k)\big)^{-\frac{1}{2s}}\nonumber\\
&\lesssim  \frac{1}{\delta^{1/s}}\big(\norm{\bu- \bu_k}_{\ak}^2 + \eta_{j}^2(\bu_k,\T_k)\big)^{-\frac{1}{2s}}
, \quad \text{since } \delta <1,\\
&\lesssim \frac{1}{\delta^{1/s}}\big(\norm{\bu -\bu_k}^2_{1,k} + \ok^2(\bu_k, \T_k)\big)^{-\frac{1}{2s}} \label{eq:qo2}.
\end{align}
To obtain the last inequality~\eqref{eq:qo2}, we have used the oscillation upper bound~\eqref{osc_bound} and the coercivity~\eqref{coer}.
\end{proof}
}
\section{Numerical Results}
In this section we present the results of the well known academic example which was proposed in~\cite{Verfuerth96} on the notorious L-shaped domain
\begin{gather*}
\Omega= (-1,1)^2 \setminus(0,1)^2
\end{gather*}
with the divergence-free forcing function and Dirichlet boundary conditions chosen so that the exact velocity and pressure in the polar coordinates $(r,{\Theta})$ are
\begin{gather*}
\begin{split}
\bu(r,{\Theta})&:= r^{\lambda} \left( \begin{array}{c}
( (1+ \lambda)\sin({\Theta})\Psi({\Theta}) + \cos({\Theta})\Psi^{\prime}({\Theta}) \\
\sin({\Theta})\Psi^\prime({\Theta})  - (1 + \lambda)\cos({\Theta})\Psi({\Theta})
\end{array}  
\right),\\
p &:= -r^{\lambda -1}\frac{(1+ \lambda)^2\Psi^{\prime}({\Theta}) + \Psi^{\prime \prime \prime}({\Theta})}{(1 - \lambda)},\\
\end{split}
\end{gather*}
 where,
 \begin{gather*}
 \begin{split}
 \Psi({\Theta}) \ &= \ \sin((1 + \lambda){\Theta})\cos(\lambda \omega)/(1 +\lambda ) \ - \cos((1+ \lambda){\Theta}) \\
  		& -  \sin((1 - \lambda){\Theta} )\cos(\lambda \omega)/(1 - \lambda ) \ + \ \cos((1- \lambda){\Theta} ), \\
\omega \ &= \frac{3 \pi}{2}  \qquad \text {and, }\lambda \ \approx \ 0.5448373678246.
 \end{split}
 \end{gather*}

Although $\grad \bu$ and $p$ admit a singularity at the re-entrant corner, we are concerned with the singularity for the velocity $\bu$ which is of the form $r^\lambda$.

To demonstrate the numerical performance of the adaptive method, we report the error decays in {Figure}~\ref{fig:conv hist} and the refinement history of the velocity dependent estimator in~\ref{fig:fig1} employing biquadratic, bicubic and biquartic Raviart Thomas elements and for different values of the constant $\theta$ in the D\"orfler marking.

\begin{figure}[p]
\centering
\includegraphics[width=0.45\textwidth,height=0.45\textwidth]{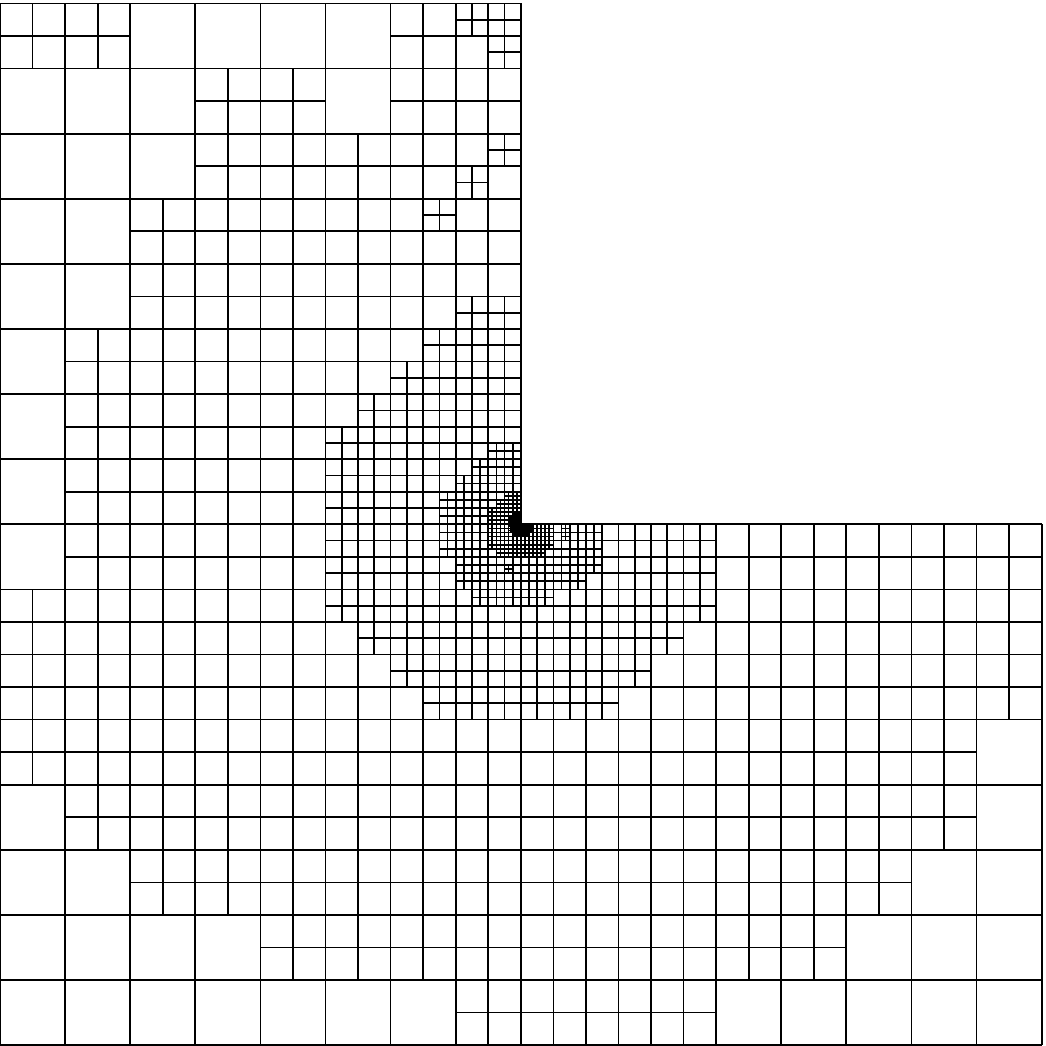} 
\includegraphics[width=0.45\textwidth,height=0.45\textwidth]{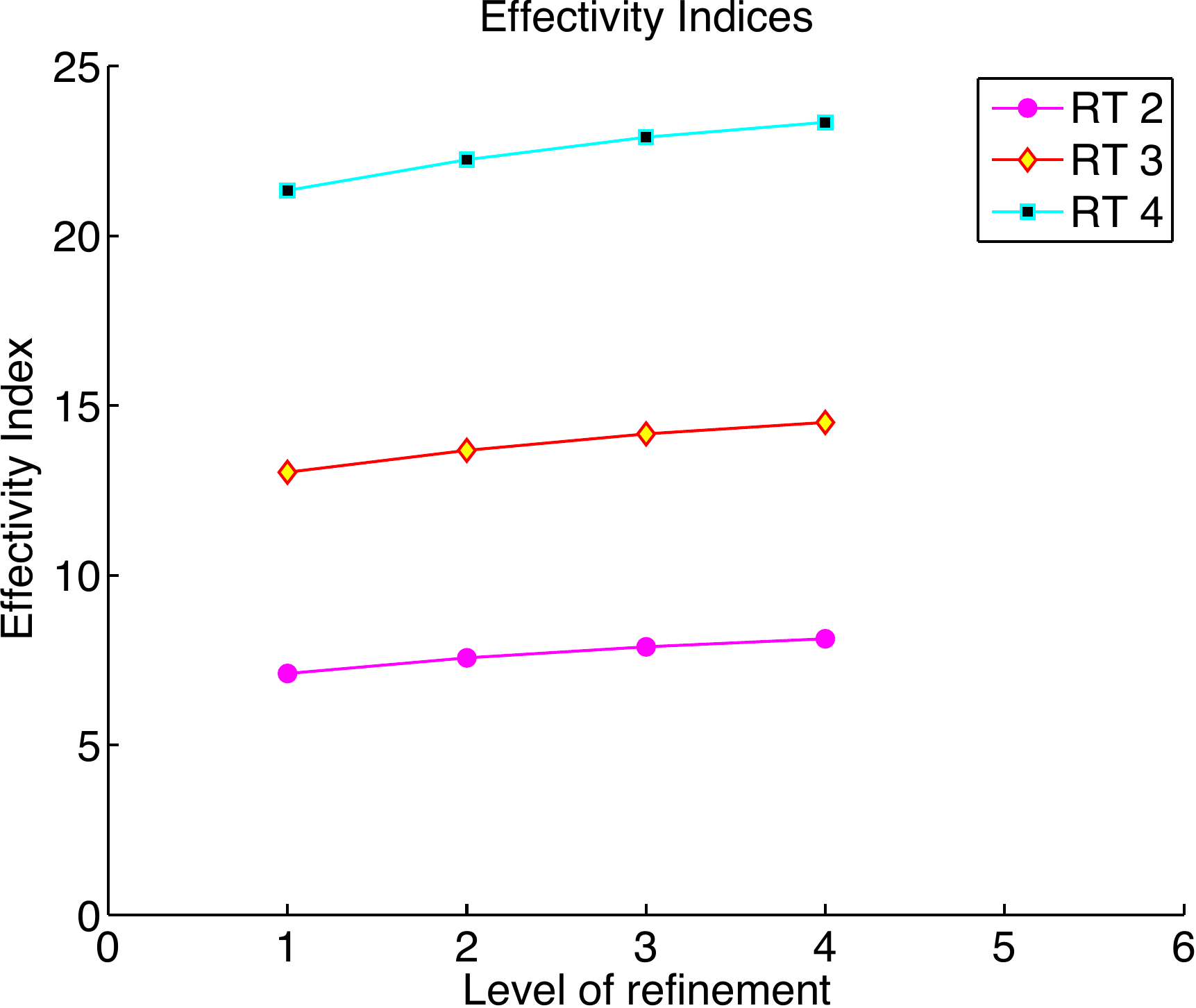} 
\caption{ Adaptive mesh using fourth order Raviart Thomas element after eight levels of adaptive refinement for $\theta =0.5$ (left) and effectivity indices for the different orders of Raviart Thomas approximation (right). }
\label{fig:fig1}

\end{figure}

\begin{figure}[p]
  \centering
  \includegraphics[width=.496599\textwidth]{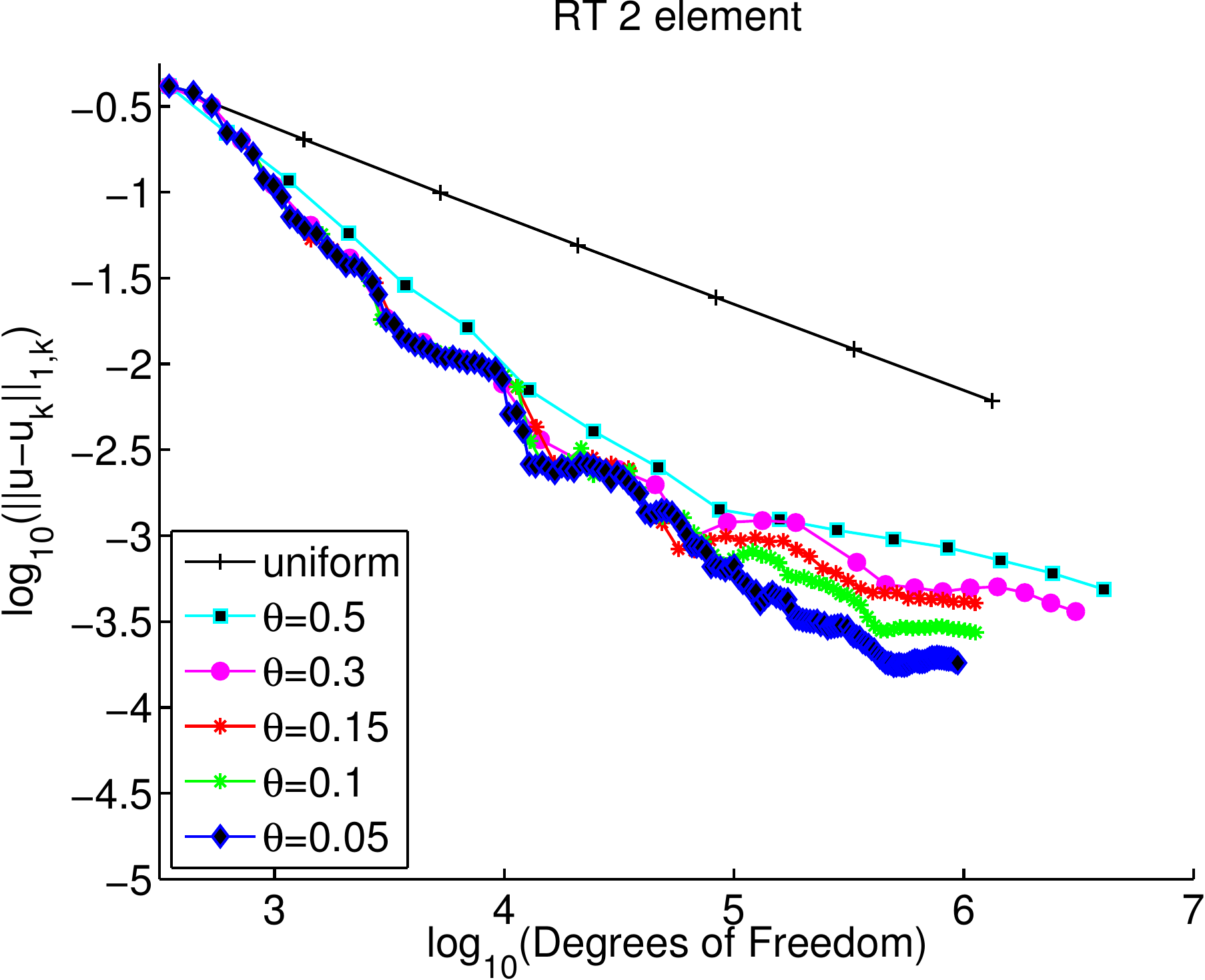}
   \includegraphics[width=.496599\textwidth]{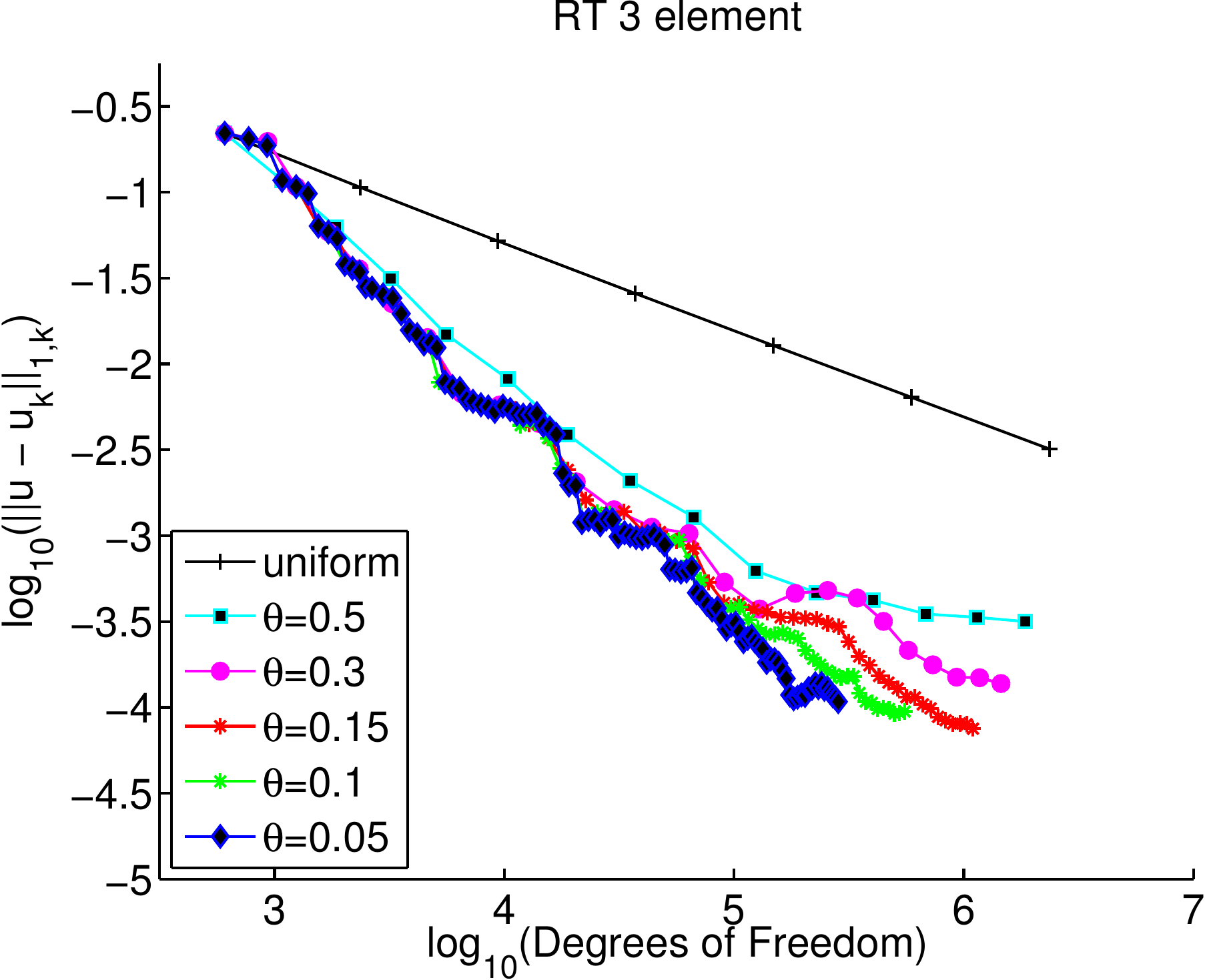}
     \includegraphics[width=.496599\textwidth]{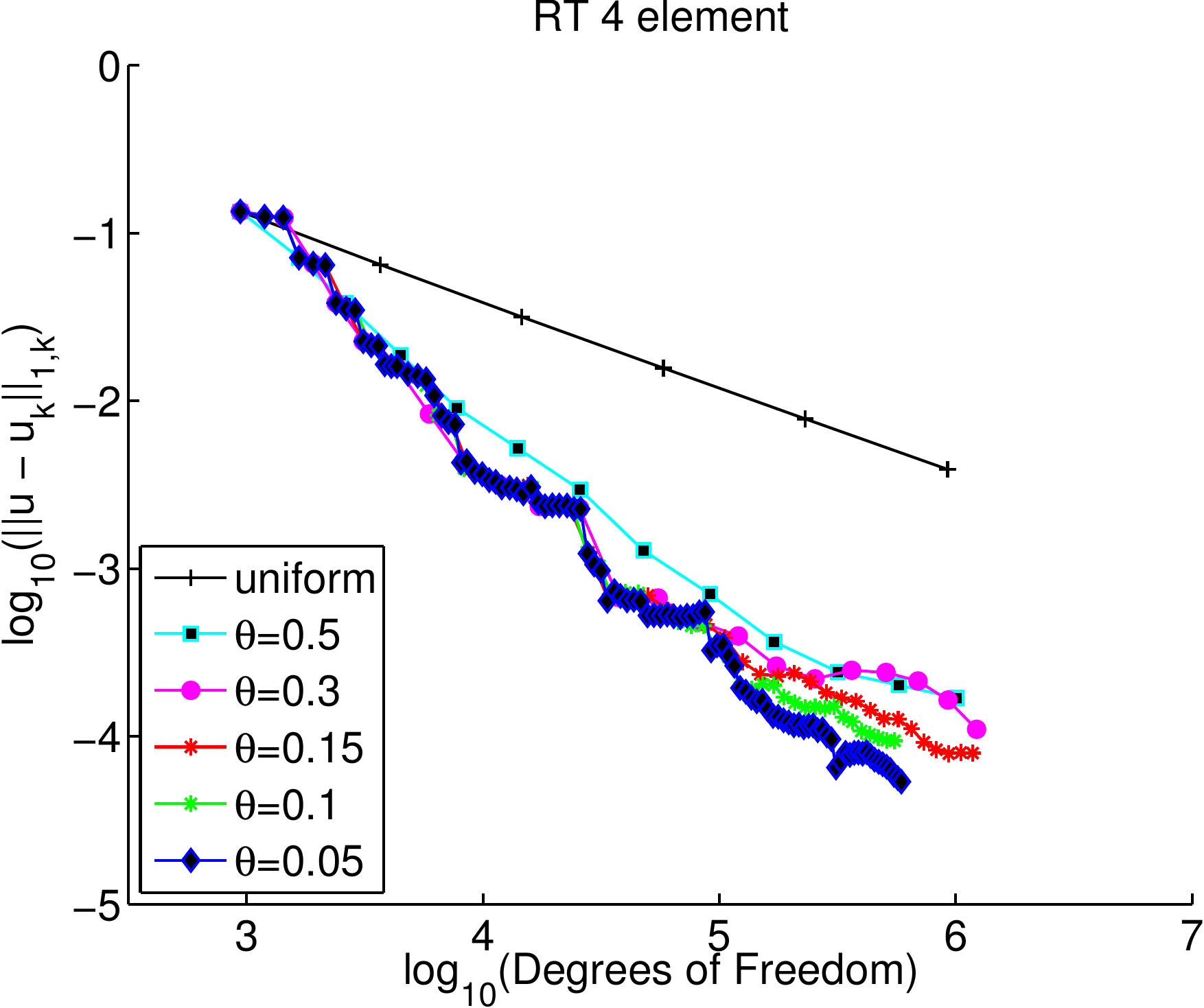}
  \caption{The convergence histories with rates for biquadratic (top left), bicubic (top right) and biquartic (bottom center) Raviart Thomas elements.}
  \label{fig:conv hist}
\end{figure}
{
Figure~\ref{fig:conv hist} reflects the convergence history of the adaptive algorithm with the discrete error displayed as a function of the DOFs on a logarithmic scale and for different choices of $\theta$.} The curves
represent the decrease in the error both for uniform refinement and for adaptive refinement. We note that for lower polynomial degree we are unable to retrieve the optimal convergence rate  $\text{(DOFs)}^{m/2}$, {\it{m}} being the order of the Raviart Thomas element. However, by employing a higher order of Raviart Thomas element, we can retrieve the optimal convergence rate.

In conclusion, we presented a contraction property for the Stokes problem by relying on the discrete Hodge decompositions of velocity. {Furthermore, the quasi-optimal cardinality for the Stokes problem was also presented.}

\section*{Acknowledgements}
Computations were performed using the deal.II finite element software library~\cite{BangerthHartmannKanschat07,dealII84}

\bibliography{all,kanschat,local}{}

\begin{thebibliography}{10}

\bibitem{AinsworthOden97}
M.~Ainsworth and J.~T. Oden.
\newblock A posteriori error estimation in finite element analysis.
\newblock {\em Comput. Methods Appl. Mech. Eng.}, 142(1-2):1--88, 1997.

\bibitem{AustinManteuffelMcCormick04}
Travis~M. Austin, Thomas~A. Manteuffel, and Steve McCormick.
\newblock A robust multilevel approach for minimizing {$\mathbf H({\rm
  div})$}-dominated functionals in an {$\mathbf H\sp 1$}-conforming finite
  element space.
\newblock {\em Numer. Linear Algebra Appl.}, 11(2-3):115--140, 2004.

\bibitem{dealII84}
W.~Bangerth, D.~Davydov, T.~Heister, L.~Heltai, G.~Kanschat, M.~Kronbichler,
  M.~Maier, B.~Turcksin, and D.~Wells.
\newblock The \texttt{deal.II} library, version 8.4.
\newblock {\em J. Numer. Math.}, 24(3):135--141, 2016.

\bibitem{BangerthHartmannKanschat07}
W.~Bangerth, R.~Hartmann, and G.~Kanschat.
\newblock deal.{II} --- a general purpose object oriented finite element
  library.
\newblock {\em {ACM} Trans. Math. Softw.}, 33(4), 2007.

\bibitem{BeMa11}
R.~Becker and S.Mao.
\newblock Quasi-{O}ptimality of {A}daptive {N}onconforming {F}inite {E}lement
  {M}ethods for the {S}tokes {E}quations.
\newblock {\em SIAM J. Numer. Anal.}, 2011.

\bibitem{BoffiBrezziFortin13}
D.~Boffi, F.~Brezzi, and M.~Fortin.
\newblock {\em Mixed finite element methods and applications}, volume~44.
\newblock Springer-Verlag Berlin Heidelberg, 2013.

\bibitem{BonitoNochetto10}
A.~Bonito and R.~H. Nochetto.
\newblock Quasi-optimal convergence rate of an adaptive discontinuous
  {G}alerkin method.
\newblock {\em SIAM J. Numer. Anal.}, 48(2):734--771, 2010.

\bibitem{BrennerScott02}
S.~C. Brenner and L.~R. Scott.
\newblock {\em The Mathematical Theory of Finite Element Methods}.
\newblock Springer, 2nd edition edition, 2002.

\bibitem{CaPeRa13}
D.~Peterseim C.~Carstensen and H.~Rabus.
\newblock Optimal {A}daptive {N}onconforming {FEM} for the {S}tokes {P}roblem.
\newblock {\em Numerische Mathematik}, 2013.

\bibitem{CasconKreuzerNochettoSiebert08}
J.~M. Cascon, Ch. Kreuzer, R.~H. Nochetto, and K.~G. Siebert.
\newblock Quasi-optimal convergence rate for an adaptive finite element method.
\newblock {\em SIAM J. Numer. Anal.}, 46(5):2524--2550, 2008.

\bibitem{BaMoNo02}
P.~Morin E.~B\"ansch and R.~Nochetto.
\newblock An adaptive {U}zawa {FEM} for the {S}tokes problem: {C}onvergence
  without the inf-sup condition.
\newblock {\em SIAM J. Numer. Anal.}, 2002.

\bibitem{HuXu07}
J.~Hu and J.~Xu.
\newblock Convergence of {A}daptive {C}onforming and {N}onconforming {F}inite
  {E}lement {M}ethods for the {P}erturbed {S}tokes equation.
\newblock {\em Research {R}eport, {S}chool of {M}athematical {S}ciences and
  {I}nstitute of {M}athematics, {P}eking University}, 2007.

\bibitem{HuXu13}
J.~Hu and J.~Xu.
\newblock Convergence and optimality of the adaptive nonconforming linear
  element method for the {S}tokes problem.
\newblock {\em Journal of Scientific Computing}, 2013.

\bibitem{CaKrNoSi08}
Ch.~Kreuzer J.M.~Cascon and R.~Nochetto.
\newblock Quasi-optimal rate of convergence of adaptive finite element methods.
\newblock {\em SIAM J. Numer. Anal.}, 2008.

\bibitem{KanschatSharma14}
G.~Kanschat and N.~Sharma.
\newblock Divergence-conforming discontinuous {G}alerkin methods and {$C^0$}
  interior penalty methods.
\newblock {\em SIAM J. Numer. Anal.}, 52(4):1822--1842, 2014.

\bibitem{Ko06}
Y.~Kondratyuk.
\newblock Adaptive finite element algorithms for the {S}tokes problem:
  Convergence rates and optimal computational complexity.
\newblock {\em Department of Mathematics, Utretch University}, 2006.
\newblock Preprint 1346.

\bibitem{KoSt08}
Y.~Kondratyuk and R.~Stevenson.
\newblock An optimal {A}daptive {F}inite {E}lement {M}ethod for the {S}tokes
  problem.
\newblock {\em SIAM J. Numer. Anal.}, 2008.

\bibitem{Nedelec80}
J.-C. Nédélec.
\newblock Mixed finite elements in {${\bf R}\sp{3}$}.
\newblock {\em Numer. Math.}, 35:315--341, 1980.

\bibitem{PerugiaSchoetzau02hp}
I.~Perugia and D.~Schötzau.
\newblock An $hp$-analysis of the local discontinuous {G}alerkin method for
  diffusion problems.
\newblock {\em J. Sci. Comput.}, 17:561--571, 2002.
\newblock Special Issue: Proceedings of the ICOSAHOM-01.

\bibitem{RaviartThomas77}
P.-A. Raviart and J.~M. Thomas.
\newblock A mixed method for second order elliptic problems.
\newblock In I.~Galligani and E.~Magenes, editors, {\em Mathematical Aspects of
  the Finite Element Method}, pages 292–--315. Springer, New York, 1977.

\bibitem{Schoeberl08}
J.~{Schoeberl}.
\newblock {A posteriori error estimates for Maxwell equations}.
\newblock {\em Mathematics of Computation}, 77:633--649, June 2008.

\bibitem{Schoeberl10-multilevel-hcurl}
J.~Schöberl.
\newblock A multilevel decomposition result in h(curl).
\newblock on the author's web site, 2010.

\bibitem{SchoetzauSchwabToselli03}
D.~Schötzau, C.~Schwab, and A.~Toselli.
\newblock $hp$-{DGFEM} for incompressible flows.
\newblock {\em SIAM J. Numer. Anal.}, 40:2171--2194, 2003.

\bibitem{Verfuerth94}
R.~Verfürth.
\newblock A posteriori error estimation and adaptive mesh-refinement
  techniques.
\newblock {\em J. Comput. Appl. Math.}, 50:67--83, 1994.

\bibitem{Verfuerth96}
R.~Verfürth.
\newblock {\em A Review of A Posteriori Error Estimation and Adaptive
  Mesh-Refinement Techniques}.
\newblock John Wiley/Teubner, 1996.

\bibitem{Warburton20032765}
T.~Warburton and J.S. Hesthaven.
\newblock On the constants in hp-finite element trace inverse inequalities.
\newblock {\em Computer Methods in Applied Mechanics and Engineering},
  192(25):2765 -- 2773, 2003.

\end{thebibliography}
\bibliographystyle{plain}
\end{document}